\newtheorem{theorem}{Theorem}[section]
\newtheorem{lemma}[theorem]{Lemma}
\newtheorem{corollary}[theorem]{Corollary}
\newtheorem{proposition}[theorem]{Proposition}
\newtheorem{conjecture}[theorem]{Conjecture}
\theoremstyle{plain}
\newtheorem{definition}[theorem]{Definition}
\newtheorem{remark}[theorem]{Remark}
\newtheorem{question}[theorem]{Question}
\theoremstyle{definition}
\theoremstyle{remark}
\numberwithin{equation}{section}
\begin{document}

\title{Deformation of K-theoretic cycles}

\author{Sen Yang}
\address{Yau Mathematical Sciences Center, Tsinghua University
\\
Beijing, China}
\email{syang@math.tsinghua.edu.cn; senyangmath@gmail.com}

\subjclass[2010]{14C25}
\date{}

\maketitle

\begin{abstract}
By using previous results in \cite{Y-2}, we answer the following two questions posed by Mark Green and Phillip Griffiths in chapter 10 of  \cite{GGtangentspace}(page 186-190): 
\begin{itemize}
\item  (1).Can one define $TZ^{p}(X)$(tangent space to cycle class group) in general ? 
 \item (2).Obstruction issues.
  \end{itemize} 
  
The highlight is the appearance of negative K-groups which detects the obstructions to deforming cycles.

\end{abstract}

\tableofcontents

\section{\textbf{Introduction}}
\label{Introduction}
For $X$ a smooth projective variety over a field $k$ of characteristic $0$, for each integer $p$ satisfying $1 \leqslant p \leqslant \mathrm{dim}(X)$, let $Z^{p}(X)$ denote the cycle class group,
\[
Z^{p}(X) = \bigoplus_{y \in X^{(p)}} \mathbb{Z} \cdot  \overline{\{ y\}}.
\]
The following question is posed by Green-Griffiths: 
\begin{question}[page 186 in \cite{GGtangentspace}] \label{question: question 1}
Can one define $TZ^{p}(X)$ in general ?
\end{question} 
Here, $TZ^{p}(X)$ is the tangent space to the cycle class group $Z^{p}(X)$. Since the abelian group $Z^{p}(X)$  is not a complex manifold or a scheme,  the known deformation theory, such as Kodaira-Spencer theory or the theory of Hilbert schemes, can't apply to this question directly. We consider $Z^{p}(-)$ as a functor and attempt to define the tangent space to this functor as usual
\[
 TZ^{p}(X):= \mathrm{Ker} \{Z^{p}(X \times \mathrm{Spec}(k[\varepsilon]/(\varepsilon^{2}))) \xrightarrow{\varepsilon=0}  Z^{p}(X) \},
\]
where $k[\varepsilon]/(\varepsilon^{2})$ is the ring of dual numbers.
Unfortunately, the classical definition of algebraic cycles can't  distinguish nilpotent, $Z^{p}(X \times \mathrm{Spec}(k[\varepsilon]/(\varepsilon^{2}))) =  Z^{p}(X)$, so this definition is clearly not the desirable one.

Green-Griffiths has answered this question for $p=1$(divisors) and $p=\mathrm{dim}(X)$(0-cycles) in \cite{GGtangentspace}. To give an example of what tangent spaces to cycle class groups are, we recall:
\begin{definition} [page 84-85 and page 141 in \cite{GGtangentspace}] \label{definition: GGdefinitionpointsurface}
For $X$ a smooth projective surface over a field $k$ of characteristic $0$, the tangent space $TZ^{2}(X)$ to the $0$-cycles on $X$ and the tangent subspace $TZ^{2}_{rat}(X)$ to the rational equivalence class are defined to be :
 \[
 TZ^{2}(X):=\bigoplus\limits_{x\in X^{(2)}} H_{x}^{2}(\Omega_{X/ \mathbb{Q}}^{1}), \\
 TZ^{2}_{rat}(X):=\mathrm{Im}(\partial_{1}^{1,-2}),
\]
where $\partial_{1}^{1,-2}$ is the differential of the Cousin complex of $\Omega_{X/ \mathbb{Q}}^{1}$,
\[
0 \to \Omega_{k(X)/ \mathbb{Q}}^{1}  \to \bigoplus\limits_{y \in X^{(1)}}H_{y}^{1}(\Omega_{X/\mathbb{Q}}^{1})  \xrightarrow{\partial^{1,-2}_{1}} \bigoplus\limits_{x\in X^{(2)}} H_{x}^{2}(\Omega_{X/ \mathbb{Q}}^{1}) \to 0.
\]
\end{definition}
It is worth noting that absolute differentials and local cohomology appear in this definition.

Moreover, Green-Griffiths points out that(page 186  in \cite{GGtangentspace}):
{ \it 
\begin{quote}
The technical issue that arises in trying straightforwardly extend the definitions given in the text for $p=n,1$ concerns cycles that are linear combinations of irreducible subvarieties
\[
 Z= \sum_{i}n_{i}Z_{i},
\]
where some $Z_{i}$ may not be the support of a locally Cohen-Macaulay scheme.
\end{quote}
}
To handle this technical issue, we look at generic points of $Z_{i}$s and need to use higher algebraic K-theory. In Section ~\ref{First order trivial deformation-tangent spaces}, we propose a definition of $TZ^{p}(X)$
in Definition ~\ref{definition: tangentspace cycles} for general $p$, generalizing Green-Griffiths' Definition \ref{definition: GGdefinitionpointsurface} above.

Considering an element $\tau \in TZ^{p}(X)$ as a first order deformation, Green-Griffiths asks whether we can successively deform $\tau$ to infinite order. It is well-known that the deformation of a subvariety $Y$, considered as an element of the Hilbert scheme $\mathrm{Hilb}(X)$, may be obstructed. However, Green-Griffiths predicts that we can eliminate obstructions, by considering $Y$ as an element of $Z^{p}(X)$:
\begin{conjecture} [page 187-190 in \cite{GGtangentspace}] \label{question: question 2}
$TZ^{p}(X)$  is formally unobstructed, see Conjecture \ref{conjecture: GGquestion2} in Section 3.2.
\end{conjecture}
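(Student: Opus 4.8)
The plan is to translate the successive-deformation problem into a lifting problem for classes in a Cousin complex of relative $K$-theory sheaves, so that the machinery of Section~\ref{First order trivial deformation-tangent spaces} and \cite{Y-2} applies at every order. I would replace the dual numbers $k[\varepsilon]/(\varepsilon^{2})$ by the truncated polynomial rings $A_{n}:=k[t]/(t^{n+1})$, and for each $x\in X^{(p)}$ work with the relative $K$-theory with support $K_{\ast}^{x}\big(\mathcal{O}_{X,x}\otimes_{k}A_{n},(t)\big)$. Extending the local-cohomology model of Definition~\ref{definition: tangentspace cycles}, an $n$-th order deformation of a cycle $Z=\sum_{i}n_{i}Z_{i}$ is then recorded as a class in $\bigoplus_{x\in X^{(p)}}H_{x}^{p}$ of the associated relative $K$-sheaf, and deforming to order $n+1$ becomes the problem of lifting this class along the small extension $0\to (t^{n+1})/(t^{n+2})\to A_{n+1}\to A_{n}\to 0$.

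The obstruction to such a lift is read off from the long exact sequence attached to this square-zero extension. I would show that, after running the Gersten/coniveau spectral sequence for $K$-theory with supports, the obstruction concentrates at the codimension-$p$ points and lives in $\bigoplus_{x\in X^{(p)}}$ of a \emph{negative} $K$-group of the residue data of $x$; this is exactly the receptacle advertised in the abstract, and the task reduces to proving that these classes vanish.

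For the vanishing I would use that $k$ has characteristic $0$, invoking Goodwillie's theorem to identify the relative $K$-theory of the nilpotent extension with relative cyclic homology, computed by the absolute Kähler differentials $\Omega^{\bullet}_{-/\mathbb{Q}}$ — the very objects appearing in Green-Griffiths' Definition~\ref{definition: GGdefinitionpointsurface}. When the $Z_{i}$ are supports of locally Cohen-Macaulay schemes, the relevant local rings are regular enough that their negative $K$-groups vanish, and, since the Cousin complex is a flasque resolution concentrated in the single cohomological degree $p$, no higher group remains in which an obstruction could survive. Granting this, each lifting obstruction is zero, the lift to order $n+1$ exists, and iterating over all $n$ produces a formal deformation over $k[[t]]$.

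The main obstacle is exactly the non-Cohen-Macaulay case singled out by Green-Griffiths. When some $Z_{i}$ is not the support of a locally Cohen-Macaulay scheme, its negative $K$-groups are genuinely nonzero and the naive vanishing above fails. The hard part will be to prove that the \emph{specific} obstruction class produced by the deformation datum — rather than the whole ambient negative $K$-group — is always a boundary of the Gersten differential $\partial^{\,p-1}$. I expect this to hinge on a compatibility between the connecting map of the square-zero extension and the Gersten differential, which should trivialize the cycle-theoretic obstruction even where negative $K$-theory does not vanish; establishing this compatibility is what would reconcile the genuine appearance of negative $K$-groups with the unobstructedness predicted by Green-Griffiths.
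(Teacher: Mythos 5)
Your setup (replacing dual numbers by $k[t]/(t^{n+1})$, working with relative K-groups with support, invoking Goodwillie/cyclic homology to identify the relative theory with absolute differentials) is indeed the framework the paper uses. But the proposal stops exactly where the actual proof has to begin: your last paragraph explicitly defers the vanishing of the specific obstruction class to an unproven "compatibility between the connecting map of the square-zero extension and the Gersten differential." That compatibility \emph{is} the theorem; without it you have only restated Conjecture~\ref{conjecture: GGquestion2} in relative K-theoretic language. What the paper actually proves (Theorem~\ref{theorem: theorem obstruction}) is the following concrete mechanism, which is absent from your proposal: by Theorem~\ref{theorem: theoremFor obstruction issue}, the Chern character identifies the relative part of the Gersten complex of $X_{j}$ with the Cousin complex of $(\Omega^{p-1}_{X/\mathbb{Q}})^{\oplus j}$, and because the deformation $X_{j}=X\times_{k}\mathrm{Spec}(k[t]/(t^{j+1}))$ is \emph{trivial} (Kodaira--Spencer class zero), this complex splits $t$-adically: $H^{p}_{x}((\Omega^{p-1}_{X/\mathbb{Q}})^{\oplus j})\cong\bigoplus_{i=1}^{j}t^{i}H^{p}_{x}(\Omega^{p-1}_{X/\mathbb{Q}})$ with differential $\bigoplus_{i}t^{i}\partial^{p,-p}_{1}$. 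Hence $\mathrm{Ch}(\xi_{j})=ta_{1}+\cdots+t^{j}a_{j}$ with each $\partial^{p,-p}_{1}(a_{i})=0$, and it lifts trivially to order $j+1$; one then uses surjectivity of $\mathrm{Ch}$ (Lemma~\ref{lemma: keylemma}) to find a K-theoretic preimage $\xi_{j+1}$, and corrects it by an element $W$ of $\ker(\mathrm{Ch})=\bigoplus_{x\in X^{(p)}}K^{M}_{0}(O_{X,x}\ \mathrm{on}\ x)$, i.e.\ by a \emph{classical} cycle, so that $(\xi_{j+1}-W)|_{t^{j+1}=0}=\xi_{j}$ while $d^{p,-p}_{1,X_{j+1}}(\xi_{j+1}-W)=0$. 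This splitting-plus-correction argument is the missing idea; it is also precisely the point where "trivial deformation of $X$" enters, which your proposal never uses.

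Two further inaccuracies would derail your plan even as a plan. First, the obstruction does not live at codimension-$p$ points: it lives in $\bigoplus_{x\in X^{(p+1)}}H^{p+1}_{x}(\Omega^{p-1}_{X/\mathbb{Q}})$, the negative K-groups $K^{M}_{-1}$ with support at codimension $p+1$ points of the thickening (Definition~\ref{definition: obstructionspace}). Second, your proposed vanishing in the locally Cohen--Macaulay case is false as stated: regularity of the local rings of $Z_{i}$ does not make the relevant negative K-groups vanish, because they are taken on the non-reduced scheme $X_{j+1}$, where $K^{M}_{-1}(O_{X_{j+1},x_{j+1}}\ \mathrm{on}\ x_{j+1})\cong H^{p+1}_{x}((\Omega^{p-1}_{X/\mathbb{Q}})^{\oplus j+1})\neq 0$ in general. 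The entire content of the theorem is that the ambient obstruction space is nonzero while every obstruction \emph{class} vanishes; no argument of the form "the receptacle is zero" can work, and indeed the paper handles the locally complete intersection case (Lemma~\ref{lemma: LCI-ObsVanish}) not by such vanishing but by explicit Koszul-complex computations as in Theorem~\ref{theorem: Inkernel-LCI}.
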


We answer this conjecture in Theorem \ref{theorem: theorem obstruction}. The main idea for answering Question \ref{question: question 1} and Conjecture \ref{question: question 2} is to use Milnor K-theoretic cycles to replace the classical algebraic cycles.
In \cite{BalmerTensorChow}, Balmer defines K-theoretic Chow groups in terms of the derived category $D^{\mathrm{perf}}(X)$ obtained from the exact category of perfect complexes of $O_{X}$-modules. His idea is followed by Klein \cite{KleinChow} and the author \cite{Y-2}. 
By modifying Balmer's K-theoretic Chow groups \cite{BalmerTensorChow}, in \cite{Y-2}, we extend Soul\'e's variant of Bloch-Quillen identification from $X$ to its infinitesimally trivial deformations.
In this note, we continue using the techniques developed in \cite{Y-2} and focus on the geometry behind the formal definitions of K-theoretic cycles.

This note is organized as follows. 
We  recall Milnor K-theoretic cycles  and answer Green-Griffiths' Question ~\ref{question: question 1} in Section ~\ref{Definition of tangent spaces}, concrete examples of Milnor K-theoretic cycles from geometry(locally complete intersections) are also discussed. In Section ~\ref{Why take kernel} and Section ~\ref{Why use Milnor K-theory}, we explain two new aspects of  Milnor K-theoretic cycles, which are different from Balmer's \cite{BalmerTensorChow}, featuring negative K-groups and Milnor K-theory.

The relation between obstructions and negative K-groups is discussed in Section ~\ref{Obstructions and negative K-groups}.
We discuss obstruction issues and answer Green-Griffiths' Conjecture ~\ref{question: question 2} in Section ~\ref{Obstruction issues}.

\textbf{Notations and conventions}.

(1). K-theory used in this note will be Thomason-Trobaugh non-connective K-theory, if not stated otherwise. 

(2). For any abelian group $M$, $M_{\mathbb{Q}}$ denotes the image of $M$ in $M \otimes_{\mathbb{Z}} \mathbb{Q}$.

(3). $k[\varepsilon]/(\varepsilon^{2})$ is the ring of dual numbers.

\section{\textbf{First order deformation-tangent spaces}}
\label{First order trivial deformation-tangent spaces}
In this section, $X$ is a $d$-dimensional smooth projective variety over a field $k$ of characteristic $0$. For each positive integer $j$, $X_{j}:=X \times_{k} \mathrm{Spec}(k[t]/ t^{j+1})$ is the $j$-th order infinitesimally trivial deformation of $X$. In particular, we use $X[\varepsilon]$ to stand for $X_{1}$, i.e., $X[\varepsilon] = (X, O_{X}[t]/(t^2))$.

Recall that Milnor K-groups with support are rationally defined in terms of eigenspaces of Adams operations in \cite{Y-2}.
\begin{definition} [Definition 3.2 in \cite{Y-2}] \label{definition:Milnor K-theory with support}
Let $x_{j} \in X_{j}^{(i)}$, for any integer $m$, Milnor K-group with support $K_{m}^{M}(O_{X_{j},x_{j}} \ \mathrm{on} \ x_{j})$ is rationally defined to be 
\[
  K_{m}^{M}(O_{X_{j},x_{j}} \ \mathrm{on} \ x_{j}) := K_{m}^{(m+i)}(O_{X_{j},x_{j}} \ \mathrm{on} \ x_{j})_{\mathbb{Q}},
\] 
where $K_{m}^{(m+i)}$ is the eigenspace for $\psi^{k}=k^{m+i}$ and $\psi^{k}$ is the Adams operations.

\end{definition}

For each positive integer $p$, there exists the following variant of Gersten complex, see Theorem 3.14 in \cite{Y-2},
{\footnotesize
\begin{align*}
 0  \to  & \bigoplus_{x_{j} \in X_{j}^{(0)}}K_{p}^{M}(O_{X_{j},x_{j}}) \to \dots \to \bigoplus_{x_{j} \in X_{j}^{(p-1)}}K_{1}^{M}(O_{X_{j},x_{j}} \ \mathrm{on} \ x_{j}) \\
   & \xrightarrow{d_{1,X_{j}}^{p-1,-p}} \bigoplus_{x_{j} \in X_{j}^{(p)}}K_{0}^{M}(O_{X_{j},x_{j}} \ \mathrm{on} \ x_{j})  \xrightarrow{d_{1,X_{j}}^{p,-p}} \bigoplus_{x_{j} \in X_{j}^{(p+1)}}K_{-1}^{M}(O_{X_{j},x_{j}} \ \mathrm{on} \ x_{j})   \to \cdots\\ 
   & \to \bigoplus_{x_{j} \in X_{j}^{(d)}}K_{q-d}^{M}(O_{X_{j},x_{j}} \ \mathrm{on} \ x_{j}) \to 0.
\end{align*}
}

\begin{definition} [Definition 3.4 and Definition 3.15 in \cite{Y-2}] \label{definition: Milnor K-theoretic Chow groups}
For each positive integer $p$, the $p$-th Milnor K-theoretic cycles and Milnor K-theoretic rational equivalence of $X_{j}$, denoted $Z^{M}_{p}(D^{\mathrm{Perf}}(X_{j}))$ and $Z^{M}_{p,rat}(D^{\mathrm{Perf}}(X_{j}))$, are defined as
\[
Z^{M}_{p}(D^{\mathrm{Perf}}(X_{j})) := \mathrm{Ker}(d_{1,X_{j}}^{p,-p}),
\]
\[
Z^{M}_{p,rat}(D^{\mathrm{Perf}}(X_{j})) := \mathrm{Im}(d_{1,X_{j}}^{p-1,-p}).
\]

The $p$-th Milnor K-theoretic Chow group of $X_{j}$ is defined to be:
\[
  CH^{M}_{p}(D^{\mathrm{perf}}(X_{j})) := \dfrac{\mathrm{Ker}(d_{1,X_{j}}^{p,-p})}{\mathrm{Im}(d_{1,X_{j}}^{p-1,-p})}.
\]
\end{definition}

The reasons why we take the kernel of $d^{p,-p}_{1, X_{j}}$ to define $Z^{M}_{p}(D^{\mathrm{perf}}(X_{j}))$ and why we use Milnor K-groups with support, i.e., certain eigenspaces of Thomason-Trobaugh K-groups,  not the entire Thomason-Trobaugh K-groups, are explained in Section ~\ref{Why take kernel} and Section ~\ref{Why use Milnor K-theory} respectively.
\subsection{Definition of tangent spaces}
\label{Definition of tangent spaces}
For $Y \subset X$ a subvariety of codimension $p$,
let $i: Y \to X$ be the inclusion, then $i_{\ast}O_{Y}$ is a coherent $O_{X}$-module and  can be resolved by a bounded complex of vector bundles on $X$.  Let $Y^{'}$ be a first order deformation of $Y$, that is, $Y^{'} \subset X[\varepsilon] $ such that $Y^{'}$ is flat over $\mathrm{Spec}(k[\varepsilon])$ and $Y^{'} \otimes_{k[\varepsilon]} k \cong Y$. Then $i_{\ast}O_{Y'}$ can be resolved by a bounded complex of vector bundles on $X[\varepsilon]$, where $i: Y^{'} \to X[\varepsilon]$.

 Let $D^{\mathrm{perf}}(X[\varepsilon])$ denote the derived category of perfect complexex of $O_{X}[\varepsilon]$-modules, and let $\mathcal{L}_{(i)}(X[\varepsilon]) \subset D^{\mathrm{perf}}(X[\varepsilon])$ be defined as
\[
  \mathcal{L}_{(i)}(X[\varepsilon]) := \{ E \in D^{\mathrm{perf}}(X[\varepsilon]) \mid \mathrm{codim_{Krull}(supph(E))} \geq -i \},
\]
where the closed subset $\mathrm{supph(E)} \subset X$ is the support of the total homology of the perfect complex $E$.
The resolution of $i_{\ast}O_{Y'}$, which is a perfect complex of $O_{X}[\varepsilon]$-module supported on $Y$, defines an element of the Verdier quotient $\mathcal{L}_{(-p)}(X[\varepsilon])/\mathcal{L}_{-(p-1)}(X[\varepsilon])$, denoted $[i_{\ast}O_{Y'}]$.

If $Y \subset X$ is a locally complete intersection of codimension $p$, there exists an open affine $U$( $\subset X$) such that $U \cap Y$ is defined by a regular sequence $(f_{1}, \cdots, f_{p})$, where $f_{i} \in O_{X}(U)$. 
Locally on $U$, $Y^{'}$ is given by lifting $f_{1}, \cdots, f_{p}$ to 
 $f_{1}+ \varepsilon g_{1}, \cdots, f_{p}+ \varepsilon g_{p}$, where $g_{i} \in O_{X}(U)$. 
 
 We use $F_{\bullet}(f_{1}+\varepsilon g_{1}, \cdots, f_{p}+\varepsilon g_{p})$ to denote the Koszul complex associated to the  regular sequence $f_{1}+\varepsilon g_{1}, \cdots, f_{p}+\varepsilon g_{p}$,  which is a resolution of $O_{X}(U)[\varepsilon]/(f_{1}+\varepsilon g_{1}, \cdots, f_{p}+\varepsilon g_{p})$:
\[
 \begin{CD}
  0 @>>> F_{p} @>A_{p}>> F_{p-1} @>A_{p-1}>>  \dots @>A_{2}>> F_{1} @>A_{1}>> F_{0},
 \end{CD}
\]
where each $F_{i}=\bigwedge^{i} (O_{X}(U)[\varepsilon])^{\bigoplus p}$ and $A_{i}: \bigwedge^{i} (O_{X}(U)[\varepsilon])^{\bigoplus p}  \to \bigwedge^{i-1} (O_{X}(U)[\varepsilon])^{\bigoplus p}$ are defined as usual.
And one can define tangent to this Koszul complex, which is given by the following commutative diagram(we assume $g_{2}= \cdots = g_{p}=0$ for simplicity):
\begin{equation}
\begin{cases}
 \begin{CD}
   F_{\bullet}(f_{1}, f_{2},\cdots, f_{p}) @>>> O_{X}(U)/(f_{1}, f_{2}, \cdots,  f_{p}) \\
  F_{p}(\cong O_{X}(U)) @>  g_{1}df_{2} \wedge \cdots \wedge df_{p}>> F_{0} \otimes \Omega_{O_{X}(U)/ \mathbb{Q}}^{p-1}(\cong\Omega_{O_{X}(U)/ \mathbb{Q}}^{p-1}), 
 \end{CD}
 \end{cases}
\end{equation}
where $d=d_{\mathbb{Q}}$. 

However, in general, $Y \subset X$ may not be a locally complete intersection and the length of the perfect complex $[i_{\ast}O_{Y'}]$, which is the resolution of $i_{\ast}O_{Y'}$, may not equal to $p$. To modify this, instead of considering $[i_{\ast}O_{Y'}]$ which is an element of  the Verdier quotient $\mathcal{L}_{(-p)}(X[\varepsilon])/\mathcal{L}_{(-p-1)}(X[\varepsilon])$ , we consider its image in the idempotent completion $(\mathcal{L}_{(-p)}(X[\varepsilon])/\mathcal{L}_{(-p-1)}(X[\varepsilon]))^{\#}$,  denoted $[i_{\ast}O_{Y'}]^{\#}$. And we have the following result: 
\begin{theorem} \cite{B-3} \label{theorem: Balmer theorem}
 For each $i \in \mathbb{Z}$, localization induces an equivalence
\begin{equation}
 (\mathcal{L}_{(i)}(X[\varepsilon])/\mathcal{L}_{(i-1)}(X[\varepsilon]))^{\#}  \simeq \bigsqcup_{x[\varepsilon] \in X[\varepsilon]^{(-i)}}D_{{x[\varepsilon]}}^{\mathrm{perf}}(X[\varepsilon])
\end{equation}
between the idempotent completion of the Verdier quotient $\mathcal{L}_{(i)}(X[\varepsilon])/\mathcal{L}_{(i-1)}(X[\varepsilon])$ and the coproduct over $x[\varepsilon] \in X[\varepsilon]^{(-i)}$ of the derived category of perfect complexes of $ O_{X[\varepsilon],x[\varepsilon]}$-modules with homology supported on the closed point $x[\varepsilon] \in \mathrm{Spec}(O_{X[\varepsilon],x[\varepsilon]})$. And consequently, one has 
\[
 K_{0}((\mathcal{L}_{(i)}(X[\varepsilon])/\mathcal{L}_{(i-1)}(X[\varepsilon]))^{\#})  \simeq \bigoplus_{x[\varepsilon] \in X[\varepsilon]^{(-i)}}K_{0}(D_{{x[\varepsilon]}}^{\mathrm{perf}}(X[\varepsilon])).
\]
\end{theorem}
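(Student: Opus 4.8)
The plan is to recover this equivalence from the Thomason--Trobaugh--Neeman localization theorem for perfect complexes, combined with the idempotent-completion technique of Balmer--Schlichting, as in \cite{B-3}. The starting observation is that the coniveau filtration $\mathcal{L}_{(i)}(X[\varepsilon])$ by codimension of homological support is engineered so that the Verdier quotient $\mathcal{L}_{(i)}(X[\varepsilon])/\mathcal{L}_{(i-1)}(X[\varepsilon])$ retains exactly the information carried by complexes whose homology is supported in codimension precisely $-i$: every complex with support of codimension $\geq -i+1$ becomes zero in the quotient. The generic points of such supports are exactly the points $x[\varepsilon] \in X[\varepsilon]^{(-i)}$, so these points are the natural index set for the decomposition.

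Next I would carry out the localization at a single point. Fixing $x[\varepsilon] \in X[\varepsilon]^{(-i)}$ and passing to the local ring $O_{X[\varepsilon],x[\varepsilon]}$, the localization theorem identifies $D^{\mathrm{perf}}_{x[\varepsilon]}(X[\varepsilon])$ --- perfect complexes on $\mathrm{Spec}(O_{X[\varepsilon],x[\varepsilon]})$ with homology concentrated at the closed point --- with the Verdier quotient of perfect complexes supported on the closure $\overline{\{x[\varepsilon]\}}$ modulo those supported on proper closed subsets. This is precisely the local contribution of $x[\varepsilon]$ to the global quotient $\mathcal{L}_{(i)}(X[\varepsilon])/\mathcal{L}_{(i-1)}(X[\varepsilon])$. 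The localization functors for distinct codimension-$(-i)$ points should then be assembled into a single functor from the global quotient to the product $\prod_{x[\varepsilon]} D^{\mathrm{perf}}_{x[\varepsilon]}(X[\varepsilon])$, and the task reduces to showing this functor becomes an equivalence onto the coproduct after idempotent completion.

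The main obstacle I expect is the splitting/orthogonality step. A given perfect complex of homological codimension $-i$ may have finitely many generic points of codimension $-i$ in its support, and in the raw Verdier quotient these pieces need not split off as genuine summands, since a Verdier quotient of idempotent-complete categories need not itself be idempotent-complete. The key is to check that the images of such complexes at distinct points are orthogonal in the quotient --- that the relevant Hom-groups vanish after localization --- so that passing to the idempotent completion $(\mathcal{L}_{(i)}(X[\varepsilon])/\mathcal{L}_{(i-1)}(X[\varepsilon]))^{\#}$ splits every object into a finite direct sum of pieces each supported at one point. This orthogonality, together with the Balmer--Schlichting compatibility of idempotent completion with these localizations, is what upgrades the product into the disjoint union (coproduct) $\bigsqcup_{x[\varepsilon] \in X[\varepsilon]^{(-i)}} D^{\mathrm{perf}}_{x[\varepsilon]}(X[\varepsilon])$. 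Once the triangulated equivalence is in hand, the $K_{0}$ statement is immediate: $K_{0}$ carries a coproduct of triangulated categories to the direct sum of their $K_{0}$-groups, giving $K_{0}((\mathcal{L}_{(i)}(X[\varepsilon])/\mathcal{L}_{(i-1)}(X[\varepsilon]))^{\#}) \simeq \bigoplus_{x[\varepsilon] \in X[\varepsilon]^{(-i)}} K_{0}(D^{\mathrm{perf}}_{x[\varepsilon]}(X[\varepsilon]))$.
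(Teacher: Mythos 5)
Your proposal is sound, but note that the paper itself offers no proof of this statement at all: it is imported verbatim from Balmer \cite{B-3}, so the only argument to compare against is Balmer's original one, which your sketch reconstructs faithfully --- Thomason--Trobaugh--Neeman localization to produce the pointwise functors, orthogonality of distinct codimension-$(-i)$ points together with the Balmer--Schlichting idempotent-completion theorem to split each object into a finite sum of local pieces (this is exactly why the completion $(-)^{\#}$ is needed and why the product collapses to a coproduct), and the observation that $K_{0}$ takes coproducts to direct sums. Nothing in your outline would fail; it is essentially the proof of the cited source.
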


Let $y$ be the generic point of $Y$, $Y$ is generically defined by a regular sequence of length $p$: $f_{1}, \cdots, f_{p}$, where $f_{1}, \cdots, f_{p} \in O_{X, y}$.  $Y'$ is generically given by lifting  $f_{1}, \cdots, f_{p}$ to $f_{1}+\varepsilon g_{1}, \cdots, f_{p}+\varepsilon g_{p}$, where $g_{1}, \cdots, g_{p} \in O_{X, y}$.
We use $F_{\bullet}(f_{1}+\varepsilon g_{1}, \cdots, f_{p}+\varepsilon g_{p})$ to denote the Koszul complex associated to the  regular sequence $f_{1}+\varepsilon g_{1}, \cdots, f_{p}+\varepsilon g_{p}$,  which is a resolution of $O_{X,y}[\varepsilon]/(f_{1}+\varepsilon g_{1}, \cdots, f_{p}+\varepsilon g_{p})$.

Under the equivalence (2.2), the localization at the generic point $y$ sends $[i_{\ast}O_{Y'}]^{\#}$ to the Koszul complex $F_{\bullet}(f_{1}+\varepsilon g_{1}, \cdots, f_{p}+\varepsilon g_{p})$:
\[
[i_{\ast}O_{Y'}]^{\#} \to F_{\bullet}(f_{1}+\varepsilon g_{1}, \cdots, f_{p}+\varepsilon g_{p}).
\]
And one can define tangent to the Koszul  complex $F_{\bullet}(f_{1}+\varepsilon g_{1}, \cdots, f_{p}+\varepsilon g_{p})$ similarly as (2.1), which defines an element of $H^{p}_{y}(\Omega^{p-1}_{X/ \mathbb{Q}})$.

\begin{remark} \label{remark: notallKoszul}
In general, we don't know whether the above kind of Koszul complexes can generate the Grothendieck group $\bigoplus\limits_{x \in X^{(p)}}K^{M}_{0}(O_{X,x}[\varepsilon] \ \mathrm{on} \ x)$ or not. So we can't use only these Koszul complexes to define tangent space to cycle class groups and have to use the following formal approach.
\end{remark}

We recall that the  Milnor K-theoretic cycles and Chow groups in Definition ~\ref{definition: Milnor K-theoretic Chow groups} recover the classical ones for $X$: 
\begin{theorem} [Theorem 3.16 in \cite{Y-2}]  \label{theorem: recover}
For $X$ a smooth projective variety over a field k of characteristic 0, for each positive integer $p$, let 
$Z^{p}(X)$, $Z^{p}_{rat}(X)$ and $CH^{p}(X)$ denote the classical
$p$-cycles, rational equivalence and Chow groups respectively, then we have the following identifications
\[
Z^{M}_{p}(D^{\mathrm{perf}}(X)) =  Z^{p}(X)_{\mathbb{Q}},
\]
\[
Z^{M}_{p,rat}(D^{\mathrm{perf}}(X)) =  Z^{p}_{rat}(X)_{\mathbb{Q}},
\]
\[
CH^{M}_{p}(D^{\mathrm{perf}}(X)) = CH^{p}(X)_{\mathbb{Q}}.
\]
\end{theorem}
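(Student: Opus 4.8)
The plan is to prove the three identifications by reducing the variant Gersten complex recalled from \cite{Y-2}, term by term, to the classical Gersten--Milnor complex that computes rationally the cycles, rational equivalence and Chow groups of $X$. The essential input is a local computation of the Milnor K-groups with support of Definition \ref{definition:Milnor K-theory with support}, carried out via d\'evissage together with the Adams eigenspace decomposition of Soul\'e. Throughout one uses that, since $X$ is smooth, every local ring $O_{X,x}$ with $x \in X^{(i)}$ is regular of Krull dimension $i$.

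First I would compute $K^M_m(O_{X,x} \ \mathrm{on} \ x)$ for $x \in X^{(i)}$. Because $O_{X,x}$ is regular, perfect complexes with homology supported on the closed point coincide with bounded complexes of coherent sheaves supported there, so d\'evissage identifies the K-theory with support with the K-theory of the residue field $k(x)$; under this identification the Adams operations shift the weight down by the codimension $i$. This gives
\[
 K^M_m(O_{X,x} \ \mathrm{on} \ x) = K_m^{(m+i)}(O_{X,x} \ \mathrm{on} \ x)_{\mathbb{Q}} \cong K_m^{(m)}(k(x))_{\mathbb{Q}} = K^M_m(k(x))_{\mathbb{Q}},
\]
the last equality being the standard fact that the top-weight eigenspace of the K-theory of a field is rationally its Milnor K-theory. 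Specializing to the two relevant positions yields, for $m=0$ and $i=p$, that $K^M_0(O_{X,x} \ \mathrm{on} \ x) \cong K^M_0(k(x))_{\mathbb{Q}} = \mathbb{Q}$, so that $\bigoplus_{x \in X^{(p)}}K^M_0(O_{X,x} \ \mathrm{on} \ x) \cong \bigoplus_{x \in X^{(p)}}\mathbb{Q} = Z^{p}(X)_{\mathbb{Q}}$; and, for $m=1$ and $i=p-1$, that $K^M_1(O_{X,x} \ \mathrm{on} \ x) \cong K^M_1(k(x))_{\mathbb{Q}} = (k(x)^{\times})_{\mathbb{Q}}$.

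The first identification then follows from the vanishing of the target of $d^{p,-p}_{1,X}$. For $x \in X^{(p+1)}$ the same computation with $m=-1$ gives $K^M_{-1}(O_{X,x} \ \mathrm{on} \ x) \cong K^M_{-1}(k(x))_{\mathbb{Q}} = 0$, since the (regular) field $k(x)$ has trivial negative K-theory. Hence $\bigoplus_{x \in X^{(p+1)}}K^M_{-1}(O_{X,x} \ \mathrm{on} \ x) = 0$, the map $d^{p,-p}_{1,X}$ is zero, and therefore
\[
 Z^{M}_{p}(D^{\mathrm{perf}}(X)) = \mathrm{Ker}(d^{p,-p}_{1,X}) = \bigoplus_{x \in X^{(p)}}K^M_0(O_{X,x} \ \mathrm{on} \ x) = Z^{p}(X)_{\mathbb{Q}}.
\]
This is exactly the phenomenon the paper means to stress: over the smooth variety $X$ the obstruction-detecting negative K-groups vanish, which forces the kernel to be the full cycle group; the contrast with $X[\varepsilon]$, where these groups no longer vanish, is what drives the later sections.

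It remains to match the differential $d^{p-1,-p}_{1,X}$ with the classical tame symbol. Using the localization equivalence of Theorem \ref{theorem: Balmer theorem} together with the functoriality of the Adams-graded boundary maps of the coniveau spectral sequence, I would verify that under the identifications above $d^{p-1,-p}_{1,X}$ becomes the residue (divisor) map $\bigoplus_{x \in X^{(p-1)}}K^M_1(k(x))_{\mathbb{Q}} \to \bigoplus_{x \in X^{(p)}}\mathbb{Q}$ of the Gersten--Milnor complex. Its image is then the group of cycles rationally equivalent to zero, giving $Z^{M}_{p,rat}(D^{\mathrm{perf}}(X)) = Z^{p}_{rat}(X)_{\mathbb{Q}}$, and the quotient is $CH^{p}(X)_{\mathbb{Q}}$. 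I expect the main obstacle to be the weight-shift d\'evissage: one must establish carefully that the Adams eigenspace decomposition is compatible with d\'evissage for the non-connective Thomason--Trobaugh K-theory with supports, that the weight shifts by exactly the codimension, and finally that the abstract differential $d_1$ coincides with the concrete tame symbol.
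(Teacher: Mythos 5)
Your proposal is correct and takes essentially the same route as the paper's proof (cited as Theorem 3.16 of \cite{Y-2}, and sketched in Section 2.3 and Remark 2.17 of this paper): d\'evissage over the regular local rings $O_{X,x}$ to reduce K-theory with supports to the residue fields, Soul\'e's Riemann--Roch without denominators to account for the Adams weight shift by the codimension, the vanishing $K_{-1}(k(x))=0$ to conclude that $d^{p,-p}_{1,X}$ is zero so that $Z^{M}_{p}(D^{\mathrm{perf}}(X)) = \bigoplus_{x \in X^{(p)}} K_{0}(k(x))_{\mathbb{Q}} = Z^{p}(X)_{\mathbb{Q}}$, and the identification of $d^{p-1,-p}_{1,X}$ with the classical divisor/tame-symbol map to recover rational equivalence and hence $CH^{p}(X)_{\mathbb{Q}}$. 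Your closing caveats (compatibility of the eigenspace decomposition with d\'evissage and the matching of $d_{1}$ with the tame symbol) are exactly the points the paper resolves by appealing to Soul\'e \cite{Soule} and Quillen \cite{Quillen}.
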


Recall that the tangent space to a functor $\mathcal{F}$, denoted $T\mathcal{F}(X)$, is defined to be 
\[
  T\mathcal{F}(X) :=\mathrm{Ker} \{ \mathcal{F}(X[\varepsilon]) \xrightarrow{\varepsilon =0 } \mathcal{F}(X)\}.
\]
Considering $Z^{M}_{p}(D^{\mathrm{perf}}(-))$ as a functor, we are guided to the following definition, which answers Green-Griffiths' {Question ~\ref{question: question 1}:
\begin{definition} \label{definition: tangentspace cycles}
For $X$ a smooth projective variety over a field k of characteristic 0, for each positive integer $p$, the tangent space to $p$-cycles, denoted $TZ^{p}(X)$, is defined to be 
\[
  TZ^{p}(X) := TZ^{M}_{p}(D^{\mathrm{perf}}(X) =\mathrm{Ker} \{ Z^{M}_{p}(D^{\mathrm{perf}}(X[\varepsilon])) \xrightarrow{\varepsilon =0 } Z^{M}_{p}(D^{\mathrm{perf}}(X)\}.
\]
Similarly, the tangent space to rational equivalent classes, denoted $TZ^{p}_{rat}(X)$, is defined to be 
\[
  TZ^{p}_{rat}(X) :=TZ^{M}_{p,rat}(D^{\mathrm{perf}}(X) = \mathrm{Ker} \{ Z^{M}_{p,rat}(D^{\mathrm{perf}}(X[\varepsilon])) \xrightarrow{\varepsilon =0 } Z^{M}_{p,rat}(D^{\mathrm{perf}}(X)\}.
 \]
\end{definition}

\newpage

The following theorem has been proved in \cite{DHY, Y-2}.
\begin{theorem} [\cite{DHY}, Theorem 3.14 in \cite{Y-2}] \label{theorem: TanMInorFinal}
Let $X$ be a smooth projective variety over a field k of characteristic 0. For each integer $p\geqslant 1$, there exists the following commutative diagram in which the Zariski sheafification of each column is a flasque resolution of $ \Omega_{X/ \mathbb{Q}}^{p-1}$,  $K^{M}_{p}(O_{X[\varepsilon]})$ and $K^{M}_{p}(O_{X})$
respectively. The left arrows are induced by Chern character from K-theory to negative cyclic homology and the right ones are the natural maps sending $\varepsilon$ to $0$:
{\scriptsize
\[
  \begin{CD}
     0 @. 0 @. 0\\
      @VVV @VVV @VVV\\
     \Omega_{k(X)/ \mathbb{Q}}^{p-1} @<<< K^{M}_{p}(k(X)[\varepsilon]) @>>> K^{M}_{p}(k(X))  \\
     @VVV @VVV @VVV\\
      \bigoplus\limits_{x \in X^{(1)}}H_{x}^{1}(\Omega_{X/\mathbb{Q}}^{p-1}) @<<< \bigoplus\limits_{x[\varepsilon] \in X[\varepsilon] ^{(1)}}K^{M}_{p-1}(O_{X,x}[\varepsilon] \ \mathrm{on} \ x[\varepsilon]) @>>>  \bigoplus\limits_{x \in X ^{(1)}}K^{M}_{p-1}(O_{X,x} \ \mathrm{on} \ x) \\
     @VVV @VVV @VVV\\
      \dots @<<< \dots @>>> \dots \\ 
      @VVV @VVV @VVV\\
     \bigoplus\limits_{x \in X^{(p-1)}}H_{x}^{p-1}(\Omega_{X/\mathbb{Q}}^{p-1}) @<<< \bigoplus\limits_{x[\varepsilon] \in X[\varepsilon] ^{(p-1)}}K^{M}_{1}(O_{X,x}[\varepsilon] \ \mathrm{on} \ x[\varepsilon])
      @>>> \bigoplus\limits_{x \in X^{(p-1)}}K^{M}_{1}(O_{X,x} \ \mathrm{on} \ x) \\
     @V\partial_{1}^{p-1,-p}VV @Vd_{1,X[\varepsilon]}^{p-1,-p}VV @Vd_{1,X}^{p-1,-p}VV\\
     \bigoplus\limits_{x \in X^{(p)}}H_{x}^{p}(\Omega_{X/\mathbb{Q}}^{p-1}) @<<< \bigoplus\limits_{x[\varepsilon] \in X[\varepsilon] ^{(p)}}K^{M}_{0}(O_{X,x}[\varepsilon] \ \mathrm{on} \ x[\varepsilon])
      @>>> \bigoplus\limits_{x \in X^{(p)}}K^{M}_{0}(O_{X,x} \ \mathrm{on} \ x) \\
     @V\partial_{1}^{p,-p}VV @Vd_{1,X[\varepsilon]}^{p,-p}VV @Vd_{1,X}^{p,-p}VV\\
     \bigoplus\limits_{x \in X^{(p+1)}}H_{x}^{p+1}(\Omega_{X/\mathbb{Q}}^{p-1}) @<<< \bigoplus\limits_{x[\varepsilon] \in X[\varepsilon] ^{(p+1)}}K^{M}_{-1}(O_{X,x}[\varepsilon] \ \mathrm{on} \ x[\varepsilon])
      @>>> \bigoplus\limits_{x \in X^{(p+1)}}K^{M}_{-1}(O_{X,x} \ \mathrm{on} \ x)=0 \\
     @VVV @VVV @VVV\\
     \cdots @<<< \cdots @>>> \cdots \\ 
     @VVV @VVV @VVV\\
     \bigoplus\limits_{x \in X^{(d)}}H_{x}^{d}(\Omega_{X/\mathbb{Q}}^{p-1}) @<<< \bigoplus\limits_{x[\varepsilon] \in X[\varepsilon] ^{(d)}}K^{M}_{q-d}(O_{X,x}[\varepsilon] \ \mathrm{on} \ x[\varepsilon]) @>>> \bigoplus\limits_{x \in X^{(d)}}K^{M}_{q-d}(O_{X,x} \ \mathrm{on} \ x) \\
     @VVV @VVV @VVV\\
      0 @. 0 @. 0.
  \end{CD}
\]
}

\end{theorem}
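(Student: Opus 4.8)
The plan is to build the diagram column by column, taking the two $K$-theoretic columns essentially from \cite{Y-2} and then constructing the left column together with the two families of horizontal maps. First I would note that the middle and right columns are exactly the variant Gersten complexes displayed just before Definition \ref{definition: Milnor K-theoretic Chow groups}, applied to $X[\varepsilon]$ and to $X$; their Zariski sheafifications are flasque resolutions of the Milnor $K$-sheaves $K^{M}_{p}(O_{X[\varepsilon]})$ and $K^{M}_{p}(O_{X})$, flasqueness being automatic since each term is a direct sum of local-cohomology groups supported at points. The right horizontal arrows are induced by the closed immersion $X \hookrightarrow X[\varepsilon]$ given by $\varepsilon \mapsto 0$; functoriality of Milnor $K$-groups with support yields a morphism of complexes, and commutativity of the right-hand squares follows from naturality of the localization boundary maps $d^{i,-p}_{1,X[\varepsilon]}$ and $d^{i,-p}_{1,X}$ under this base change. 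Hence the real content is the left column and the left-hand comparison maps.

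Next I would identify the left column as the Cousin complex of the absolute differentials $\Omega^{p-1}_{X/\mathbb{Q}}$ for the filtration by codimension of support, so that its term in the row indexed by $X^{(i)}$ is $\bigoplus_{x \in X^{(i)}} H^{i}_{x}(\Omega^{p-1}_{X/\mathbb{Q}})$ with differentials $\partial^{i,-p}_{1}$; these terms are flasque because local-cohomology sheaves supported at points are flasque. For exactness I would use that, although $\Omega^{1}_{X/\mathbb{Q}}$ is not coherent, the exact sequence $0 \to \Omega^{1}_{k/\mathbb{Q}} \otimes_{k} O_{X} \to \Omega^{1}_{X/\mathbb{Q}} \to \Omega^{1}_{X/k} \to 0$ exhibits it as a locally free $O_{X}$-module of possibly infinite rank, so that $\Omega^{p-1}_{X/\mathbb{Q}} = \bigwedge^{p-1}\Omega^{1}_{X/\mathbb{Q}}$ is again locally free over the regular scheme $X$. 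For a locally free sheaf on a regular scheme the Cousin complex is a resolution (Grothendieck--Hartshorne residue theory), and writing $\Omega^{p-1}_{X/\mathbb{Q}}$ as the filtered colimit of its finite-rank locally free $O_{X}$-subsheaves lets exactness and flasqueness pass to the colimit. This realizes the left column as a flasque resolution of $\Omega^{p-1}_{X/\mathbb{Q}}$.

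The heart of the matter is the left horizontal arrows. Using $\varepsilon \mapsto 0$ together with its section $O_{X} \to O_{X}[\varepsilon]$, each middle term splits rationally as $K^{M}_{p-i}(O_{X,x}\ \mathrm{on}\ x) \oplus TK^{M}_{p-i}(O_{X,x}\ \mathrm{on}\ x)$, and I would define the left arrow as the relative Chern character to negative cyclic homology on the relative summand. The local input is the computation of the tangent to Milnor $K$-theory of the dual numbers: by Goodwillie's theorem the relative Chern character is a rational equivalence onto the relative negative cyclic homology, and the cyclic homology of $k[\varepsilon]$ shows that its Adams weight-$p$ part is concentrated in $\Omega^{p-1}_{X/\mathbb{Q}}$, realized at the generic point by the $d\log$ map $\{1+\varepsilon a, u_{1}, \dots, u_{p-1}\} \mapsto a\, d\log u_{1}\wedge \cdots \wedge d\log u_{p-1}$. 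Thus the relative Milnor $K$-sheaf satisfies $T\mathcal{K}^{M}_{p}(O_{X}) \cong \Omega^{p-1}_{X/\mathbb{Q}}$, and applying local cohomology along the Cousin filtration identifies the relative summand $TK^{M}_{p-i}(O_{X,x}\ \mathrm{on}\ x)$ with $H^{i}_{x}(\Omega^{p-1}_{X/\mathbb{Q}})$ for $x \in X^{(i)}$. Note that the whole middle column lives in Adams weight $m+i = (p-i)+i = p$ in the sense of Definition \ref{definition:Milnor K-theory with support}, which is precisely why $\Omega^{p-1}_{X/\mathbb{Q}}$ appears uniformly along the left column.

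\textbf{The hard part} will be verifying that the left-hand squares commute, that is, that the Chern character intertwines the $K$-theoretic differentials $d^{i,-p}_{1,X[\varepsilon]}$ with the Cousin differentials $\partial^{i,-p}_{1}$. Both families are boundary maps in the localization sequences attached to the coniveau filtration, so I expect commutativity to follow from naturality of the Chern character with respect to these boundary maps; the delicate point is that this naturality must be compatible with the Adams eigenspace decomposition, so that the boundary maps keep the whole comparison inside weight $p$ and the degreewise $d\log$ isomorphisms assemble into a genuine morphism of complexes. Establishing this weight-compatibility of the relative Chern character along the localization boundary is where the technical work concentrates; once it is in place, the three columns and the two sets of horizontal maps fit together into the asserted commutative diagram. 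As a consistency check I would finally observe that, since $\varepsilon \mapsto 0$ splits the middle column rationally into the right column and (via the $d\log$ isomorphism) the left column, exactness of the two outer columns already forces the middle column to be a resolution, in agreement with Theorem 3.14 of \cite{Y-2}.
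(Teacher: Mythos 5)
The first thing to say is that this paper contains no proof of this statement at all: the theorem is imported wholesale from \cite{DHY} and Theorem 3.14 of \cite{Y-2}, so there is no in-text argument to compare yours against line by line. What can be compared is your outline versus the machinery of those references as it is quoted and used elsewhere in this paper, and there your reconstruction is faithful: the middle and right columns are the variant Gersten complexes built from Balmer's coniveau filtration (Theorem \ref{theorem: Balmer theorem}) with the Adams-eigenspace Milnor $K$-groups of Definition \ref{definition:Milnor K-theory with support}; the left column is the Cousin complex of $\Omega^{p-1}_{X/\mathbb{Q}}$, with exactness reduced to local freeness of $\Omega^{1}_{X/\mathbb{Q}}$ in characteristic $0$ and passage to direct sums/filtered colimits; and the left arrows come from the relative Chern character to negative cyclic homology, with Goodwillie's theorem plus the cyclic homology of dual numbers identifying the relative weight-$p$ summand of $K^{M}_{p-i}$ with support with $H^{i}_{x}(\Omega^{p-1}_{X/\mathbb{Q}})$. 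That identification is exactly the content of Theorem \ref{theorem: CompRelatK} (Corollary 3.11 of \cite{Y-2}) and the split exact sequence displayed after it, and your observation that the entire middle column sits in Adams weight $m+i=p$ is the correct reason why $\Omega^{p-1}_{X/\mathbb{Q}}$ appears uniformly down the left column.

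The one place where your plan is a promissory note rather than a proof is the commutativity of the left-hand squares, i.e.\ that the Chern character intertwines $d^{i,-p}_{1,X[\varepsilon]}$ with $\partial^{i,-p}_{1}$ compatibly with the weight decomposition. You flag this as the hard part, correctly, but note that it is essentially the \emph{entire} substance of the theorem: once it holds, everything else in your sketch is assembly. In the cited sources this is where the work happens --- the Chern character is shown to be compatible with the localization/coniveau boundary maps, and its effect is pinned down locally by the Ang\'eniol--Lejeune-Jalabert construction \cite{A-LJ}, which the present paper recalls and exploits in the proof of Theorem \ref{theorem: Inkernel-LCI}. Be aware also that your closing ``consistency check'' (exactness of the outer columns forces the middle column to be a resolution) silently presupposes that the rational splitting of the middle column is a splitting \emph{of complexes}, which is again exactly the square-commutativity you have not yet established; so it cannot serve as independent corroboration. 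In short: your route is the same as the one the paper cites, and it is viable, but as written the decisive step is assumed rather than proved.
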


This diagram enables us to compute $TZ^{p}(X)$ and $TZ^{p}_{rat}(X)$. A quick diagram chasing shows
\begin{theorem} \label{theorem: compute tangent space}
Let $X$ be a smooth projective variety over a field k of characteristic 0. For each integer $p\geqslant 1$, we have the following identifications:
\[
  TZ^{p}(X) \cong \mathrm{Ker}(\partial_{1}^{p,-p}),
\]
\[
  TZ^{p}_{rat}(X) \cong \mathrm{Im}(\partial_{1}^{p-1,-p}).
 \]
\end{theorem}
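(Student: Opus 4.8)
\section*{Proof proposal}

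The plan is to read the big commutative diagram of Theorem~\ref{theorem: TanMInorFinal} as a short exact sequence of cochain complexes and then chase. Write $L^{\bullet}$ for the left column (the Cousin complex of $\Omega^{p-1}_{X/\mathbb{Q}}$, with differentials $\partial_{1}^{i,-p}$), $M^{\bullet}$ for the middle column (differentials $d_{1,X[\varepsilon]}^{i,-p}$), and $R^{\bullet}$ for the right column (differentials $d_{1,X}^{i,-p}$). Theorem~\ref{theorem: TanMInorFinal} shows that each row is short exact: the map $\pi\colon M^{i}\to R^{i}$ setting $\varepsilon=0$ is surjective, and the (relative) Chern character identifies $\mathrm{Ker}(\pi)$ with the corresponding term $L^{i}$ of the left column, compatibly with the vertical differentials. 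Assembling these rows yields a short exact sequence of complexes $0\to L^{\bullet}\to M^{\bullet}\xrightarrow{\pi} R^{\bullet}\to 0$, in which $\pi$ is exactly the ``$\varepsilon=0$'' map appearing in Definition~\ref{definition: tangentspace cycles}.

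First I would record that this sequence splits. The structure morphism $\mathrm{pr}\colon X[\varepsilon]=X\times_{k}\mathrm{Spec}(k[\varepsilon])\to X$ is flat, since $k\to k[\varepsilon]$ is free of rank $2$; because flat pullback is compatible with the coniveau filtration and with the Adams operations cutting out the Milnor eigenspaces, it induces a chain map $s\colon R^{\bullet}\to M^{\bullet}$ preserving the codimension of supports. As $\mathrm{pr}$ is a one-sided inverse of the closed immersion $X\hookrightarrow X[\varepsilon]$ cut out by $\varepsilon=0$, we get $\pi\circ s=\mathrm{id}$, so $s$ splits $\pi$ and $M^{\bullet}\cong L^{\bullet}\oplus R^{\bullet}$ as complexes, with $d_{1,X[\varepsilon]}^{i,-p}=\partial_{1}^{i,-p}\oplus d_{1,X}^{i,-p}$ under this identification and $\pi$ the projection onto the second summand.

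With the splitting in hand the two identities are immediate. For cycles, $Z^{M}_{p}(D^{\mathrm{perf}}(X[\varepsilon]))=\mathrm{Ker}(d_{1,X[\varepsilon]}^{p,-p})=\mathrm{Ker}(\partial_{1}^{p,-p})\oplus\mathrm{Ker}(d_{1,X}^{p,-p})$, and $\pi$ is projection to the second factor, so $TZ^{p}(X)=\mathrm{Ker}(\pi)=\mathrm{Ker}(\partial_{1}^{p,-p})$. For rational equivalence, $Z^{M}_{p,rat}(D^{\mathrm{perf}}(X[\varepsilon]))=\mathrm{Im}(d_{1,X[\varepsilon]}^{p-1,-p})=\mathrm{Im}(\partial_{1}^{p-1,-p})\oplus\mathrm{Im}(d_{1,X}^{p-1,-p})$, again with $\pi$ the second projection, whence $TZ^{p}_{rat}(X)=\mathrm{Im}(\partial_{1}^{p-1,-p})$.

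The only point needing genuine care is the second identity, and I expect it to be the main obstacle. The inclusion $\mathrm{Im}(\partial_{1}^{p-1,-p})\hookrightarrow TZ^{p}_{rat}(X)$ is formal; the reverse inclusion asks that an element $\gamma=d_{1,X[\varepsilon]}^{p-1,-p}(m)$ lying in $\mathrm{Ker}(\pi)$ actually come from $L^{p-1}$, which amounts to lifting $\pi(m)\in\mathrm{Ker}(d_{1,X}^{p-1,-p})$ to $\mathrm{Ker}(d_{1,X[\varepsilon]}^{p-1,-p})$, i.e.\ to surjectivity of $\mathrm{Ker}(d_{1,X[\varepsilon]}^{p-1,-p})\to\mathrm{Ker}(d_{1,X}^{p-1,-p})$ (equivalently, vanishing of the Snake connecting map). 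The chain-level section $s$ supplies this lift at once, so once the splitting is justified the rest is bookkeeping. By contrast the first identity needs only exactness of the row in degree $p$ together with injectivity of $L^{p+1}\hookrightarrow M^{p+1}$, and is a one-line chase, which is presumably why the statement advertises ``a quick diagram chasing.''
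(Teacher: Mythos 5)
Your proposal is correct, and its skeleton is the same as the paper's intended argument: read the diagram of Theorem~\ref{theorem: TanMInorFinal} as a short exact sequence of complexes $0\to L^{\bullet}\to M^{\bullet}\xrightarrow{\pi}R^{\bullet}\to 0$ and chase. (One small attribution point: the row exactness you use is not literally part of the statement of Theorem~\ref{theorem: TanMInorFinal}; it is Theorem~\ref{theorem: CompRelatK} together with Lemma~\ref{lemma: keylemma}, i.e.\ Corollary 3.11 of \cite{Y-2}, which identifies $\mathrm{Ker}(\pi)$ with the left column via the relative Chern character and gives surjectivity of $\pi$.) Where you genuinely add value is in refusing to let the second identity pass as ``a quick diagram chasing'': you correctly isolate that $TZ^{p}_{rat}(X)=\mathrm{Im}(\partial_{1}^{p-1,-p})$ needs surjectivity of $\mathrm{Ker}(d^{p-1,-p}_{1,X[\varepsilon]})\to\mathrm{Ker}(d^{p-1,-p}_{1,X})$, equivalently vanishing of the connecting map $H^{p-1}(R^{\bullet})\to H^{p}(L^{\bullet})$, which is not formal, and you supply it by the chain-level section $s$ given by flat pullback along $X[\varepsilon]\to X$. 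That splitting is exactly what the paper keeps implicit: it resurfaces in Section 3 as the split exact sequence (3.1) and the decomposition (3.3), $K^{M}_{m}(O_{X_{j},x_{j}}\ \mathrm{on}\ x_{j})\cong A\oplus tB\oplus\cdots\oplus t^{j}B$, whose $A$-summand inclusion is precisely your $s$; and, in sheafified form, in the closing Green--Griffiths argument, where the coboundary of $0\to\Omega^{p-1}_{X/\mathbb{Q}}\to K^{M}_{p}(O_{X[\varepsilon]})\to K^{M}_{p}(O_{X})\to 0$ equals $\theta\rfloor\eta(\cdot)$ and vanishes because the Kodaira--Spencer class $\theta$ of a trivial deformation is zero. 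So your route buys a self-contained proof of the rational-equivalence identity, at the modest cost of invoking (standard, but worth citing) compatibility of flat pullback with the coniveau filtration, the Gersten differentials, and the Gillet--Soul\'e Adams operations --- functorialities the paper itself relies on tacitly when it uses $f_{j}^{*}$ and the direct-summand decompositions in Section 3. Your handling of the first identity, using only row exactness in degree $p$ and the isomorphism $\mathrm{Ch}\colon\bigoplus_{x[\varepsilon]\in X[\varepsilon]^{(p+1)}}K^{M}_{-1}(O_{X,x}[\varepsilon]\ \mathrm{on}\ x[\varepsilon])\xrightarrow{\ \cong\ }\bigoplus_{x\in X^{(p+1)}}H^{p+1}_{x}(\Omega^{p-1}_{X/\mathbb{Q}})$ in degree $p+1$ (where the right column vanishes), matches the paper exactly.
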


Evidently, $TZ^{p}_{rat}(X)$ is a subspace of $TZ^{p}(X)$. We use the quotient space to define the tangent space to Chow groups:
\begin{definition} \label{definition: tangentspace Chowgroups}
Let $X$ be a smooth projective variety over a field k of characteristic 0. For each integer $p\geqslant 1$, the tangent space to $CH^{p}(X)$,  denoted $TCH^{p}(X)$, is defined to be 
\[
  TCH^{p}(X) := \dfrac{TZ^{p}(X)}{ TZ^{p}_{rat}(X) }.
\]
\end{definition}

\begin{theorem} \label{theorem: agreeBlochformal}
$TCH^{p}(X)$ agrees with the formal tangent space $T_{f}CH^{p}(X)$ defined by Bloch \cite{BlochLectures}, where $T_{f}CH^{p}(X)=H^{p}(X, \Omega_{X/\mathbb{Q}}^{p-1})$.
\end{theorem}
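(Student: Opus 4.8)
The plan is to reduce the statement to a computation of Zariski cohomology via the flasque resolution already exhibited in Theorem~\ref{theorem: TanMInorFinal}. First, by Definition~\ref{definition: tangentspace Chowgroups} together with Theorem~\ref{theorem: compute tangent space}, one has
\[
  TCH^{p}(X) = \frac{TZ^{p}(X)}{TZ^{p}_{rat}(X)} \cong \frac{\mathrm{Ker}(\partial_{1}^{p,-p})}{\mathrm{Im}(\partial_{1}^{p-1,-p})}.
\]
The right-hand side is precisely the cohomology, at the spot indexed by $X^{(p)}$, of the left-hand column of the diagram in Theorem~\ref{theorem: TanMInorFinal}, namely the complex
\[
  \cdots \xrightarrow{\partial_{1}^{p-1,-p}} \bigoplus_{x \in X^{(p)}} H_{x}^{p}(\Omega_{X/\mathbb{Q}}^{p-1}) \xrightarrow{\partial_{1}^{p,-p}} \bigoplus_{x \in X^{(p+1)}} H_{x}^{p+1}(\Omega_{X/\mathbb{Q}}^{p-1}) \to \cdots.
\]
Thus the entire statement comes down to identifying this single cohomology group.

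Second, I would invoke the part of Theorem~\ref{theorem: TanMInorFinal} asserting that the Zariski sheafification of this column is a flasque resolution of $\Omega_{X/\mathbb{Q}}^{p-1}$. Writing $\mathcal{C}^{\bullet}$ for that resolution, each term $\bigoplus_{x \in X^{(i)}} H_{x}^{i}(\Omega_{X/\mathbb{Q}}^{p-1})$ is the group of global sections of the flasque sheaf $\mathcal{C}^{i}$, so the displayed column is exactly the complex $\Gamma(X, \mathcal{C}^{\bullet})$. Since flasque sheaves are acyclic for the global-sections functor, the cohomology of $\Gamma(X, \mathcal{C}^{\bullet})$ computes the Zariski cohomology of $\Omega_{X/\mathbb{Q}}^{p-1}$; in degree $p$ this yields
\[
  \frac{\mathrm{Ker}(\partial_{1}^{p,-p})}{\mathrm{Im}(\partial_{1}^{p-1,-p})} \cong H^{p}(X, \Omega_{X/\mathbb{Q}}^{p-1}).
\]
Combining with the first step gives $TCH^{p}(X) \cong H^{p}(X, \Omega_{X/\mathbb{Q}}^{p-1})$.

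Finally, I would match this with Bloch's formal tangent space, recalling from \cite{BlochLectures} that $T_{f}CH^{p}(X) = H^{p}(X, \Omega_{X/\mathbb{Q}}^{p-1})$, which closes the argument. The main obstacle is not the homological algebra, which is formal once the flasque resolution is granted, but rather making sure the abstract isomorphism produced above is genuinely \emph{the same} as Bloch's: Bloch's identification arises from the tangent to the Gersten resolution of the sheaf $\mathcal{K}^{M}_{p}$, with the passage to $\Omega_{X/\mathbb{Q}}^{p-1}$ effected by a $d\log$-type Chern-character map. I therefore expect the delicate step to be checking that the left vertical Chern-character-to-negative-cyclic-homology arrows of Theorem~\ref{theorem: TanMInorFinal} are compatible, term by term, with the maps underlying Bloch's computation, so that the two flasque resolutions of $\Omega_{X/\mathbb{Q}}^{p-1}$ are matched compatibly and the resulting cohomology identifications agree on the nose rather than merely up to abstract isomorphism.
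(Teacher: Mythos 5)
Your proposal is correct and is essentially the paper's own proof: the paper's one-line argument invokes exactly the fact you use, that the Zariski sheafification of the left column of Theorem~\ref{theorem: TanMInorFinal} is a flasque resolution of $\Omega_{X/\mathbb{Q}}^{p-1}$, so that the column's cohomology at the $X^{(p)}$ spot, which is $TZ^{p}(X)/TZ^{p}_{rat}(X)$ by Theorem~\ref{theorem: compute tangent space}, computes $H^{p}(X, \Omega_{X/\mathbb{Q}}^{p-1})$. Your closing worry about matching the isomorphism with Bloch's ``on the nose'' goes beyond what the theorem asserts (and beyond what the paper proves), since $T_{f}CH^{p}(X)$ is taken here to \emph{be} the group $H^{p}(X, \Omega_{X/\mathbb{Q}}^{p-1})$, so the abstract isomorphism suffices.
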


\begin{proof}
It immediately follows from the fact that the Zariski sheafification of the left column in Theorem ~\ref{theorem: TanMInorFinal} is a flasque resolution of $\Omega_{X/\mathbb{Q}}^{p-1}$.

\end{proof}

For $X$ a smooth projective surface over a field $k$ of characteristic $0$, by taking $p=2$ in Theorem ~\ref{theorem: compute tangent space}, we immediately see that 
\begin{corollary} \label{corollary: agreetangentspace}
For $X$ a smooth projective surface over a field $k$ of characteristic $0$, Green and Griffiths' definitions of $TZ^{2}(X)$ and  $TZ^{2}_{rat}(X)$, recalled in Definition \ref{definition: GGdefinitionpointsurface}, agree with the formal Definition \ ~\ref{definition: tangentspace cycles}.
\end{corollary}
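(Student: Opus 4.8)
The plan is to deduce the corollary directly from Theorem \ref{theorem: compute tangent space} by specializing to $p=2$ and exploiting the dimension of $X$. First I would record that for $p=2$ the left column of the diagram in Theorem \ref{theorem: TanMInorFinal} is a flasque resolution of $\Omega_{X/\mathbb{Q}}^{1}$ whose first three terms and differentials are
\[
0 \to \Omega_{k(X)/\mathbb{Q}}^{1} \to \bigoplus_{y \in X^{(1)}}H_{y}^{1}(\Omega_{X/\mathbb{Q}}^{1}) \xrightarrow{\partial^{1,-2}_{1}} \bigoplus_{x\in X^{(2)}} H_{x}^{2}(\Omega_{X/\mathbb{Q}}^{1}) \xrightarrow{\partial_{1}^{2,-2}} \cdots,
\]
so that the operator $\partial_{1}^{1,-2}$ appearing in Theorem \ref{theorem: compute tangent space} is literally the Cousin differential used in Green-Griffiths' Definition \ref{definition: GGdefinitionpointsurface}.

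For the rational-equivalence tangent space, Theorem \ref{theorem: compute tangent space} gives $TZ^{2}_{rat}(X) \cong \mathrm{Im}(\partial_{1}^{1,-2})$, and this is word-for-word the definition $TZ^{2}_{rat}(X) := \mathrm{Im}(\partial_{1}^{1,-2})$ of Green-Griffiths once the two complexes have been identified. Hence the rational part requires no further argument beyond matching the differentials.

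For the cycle tangent space, Theorem \ref{theorem: compute tangent space} gives $TZ^{2}(X) \cong \mathrm{Ker}(\partial_{1}^{2,-2})$. The key observation is that the target of $\partial_{1}^{2,-2}$ is $\bigoplus_{x \in X^{(3)}} H_{x}^{3}(\Omega_{X/\mathbb{Q}}^{1})$, and since $\dim(X)=2$ there are no points of codimension $3$, i.e. $X^{(3)} = \emptyset$. Thus this target group is zero, so $\partial_{1}^{2,-2}=0$ and
\[
\mathrm{Ker}(\partial_{1}^{2,-2}) = \bigoplus_{x\in X^{(2)}} H_{x}^{2}(\Omega_{X/\mathbb{Q}}^{1}),
\]
which is precisely Green-Griffiths' $TZ^{2}(X)$. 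This completes the comparison for both groups.

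I do not expect any serious obstacle here. The only point demanding genuine justification is the identification of the left column of Theorem \ref{theorem: TanMInorFinal} with the Cousin complex of $\Omega_{X/\mathbb{Q}}^{1}$, and this is immediate from the assertion in that theorem that the Zariski sheafification of the left column is a flasque resolution of $\Omega_{X/\mathbb{Q}}^{1}$: since the Cousin complex is the canonical such resolution, the two agree term-by-term and differential-by-differential. All the analytic content is already carried by the preceding theorems, and the corollary reduces to the elementary dimension count $X^{(3)}=\emptyset$ that is special to surfaces.
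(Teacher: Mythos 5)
Your proof is correct and follows essentially the same route as the paper, which simply states that the corollary follows ``immediately'' by taking $p=2$ in Theorem~\ref{theorem: compute tangent space}. You have merely made explicit the two details the paper leaves implicit: that $\partial_{1}^{1,-2}$ in Theorem~\ref{theorem: compute tangent space} is the Cousin differential of Definition~\ref{definition: GGdefinitionpointsurface}, and that $\mathrm{Ker}(\partial_{1}^{2,-2}) = \bigoplus_{x\in X^{(2)}} H_{x}^{2}(\Omega_{X/\mathbb{Q}}^{1})$ because $X^{(3)}=\emptyset$ for a surface.
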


Next, we provide concrete examples of  Milnor K-theoretic cycles which are from geometry.
 Let $Y \subset X$ be a locally complete intersection of codimension $p$. For a point $x \in Y \subset X$, there exists an open affine $U$( $\subset X$) containing $x$ such that $U \cap Y$ is defined by a regular sequence $f_{1}, \cdots, f_{p}$, where $f_{i} \in O_{X,x}$.  
 Let $Y^{'}$ be a first order deformation of $Y$, locally on $U$, $Y^{'}$ is given by lifting $f_{1}, \cdots, f_{p}$ to 
 $f_{1}+ \varepsilon g_{1}, \cdots, f_{p}+ \varepsilon g_{p}$, where $g_{i} \in O_{X,x}$. 
 
Let $y$ be the generic point of $Y$, then $O_{X,y}=(O_{X,x})_{(f_{1}, \cdots, f_{p})}$ and we see $Y$  is generically defined by $f_{1}, \cdots, f_{p}$. We use $F_{\bullet}(f_{1}+\varepsilon g_{1}, \cdots, f_{p}+\varepsilon g_{p})$ to denote the Koszul complex associated to the  regular sequence $f_{1}+\varepsilon g_{1}, \cdots, f_{p}+\varepsilon g_{p}$,  which is a resolution of $O_{X, y}[\varepsilon]/(f_{1}+\varepsilon g_{1}, \cdots, f_{p}+\varepsilon g_{p})$:
\[
 \begin{CD}
  0 @>>> F_{p} @>A_{p}>> F_{p-1} @>A_{p-1}>>  \dots @>A_{2}>> F_{1} @>A_{1}>> F_{0},
 \end{CD}
\]
where each $F_{i}=\bigwedge^{i} (O_{X,y}[\varepsilon])^{\bigoplus p}$ and $A_{i}: \bigwedge^{i} (O_{X,y}[\varepsilon])^{\bigoplus p}  \to \bigwedge^{i-1} (O_{X,y}[\varepsilon])^{\bigoplus p}$ are defined as usual.
Then $F_{\bullet}(f_{1}+\varepsilon g_{1}, \cdots, f_{p}+\varepsilon g_{p}) \in K_{0}(O_{X,y}[\varepsilon] \ \mathrm{on} \ y[\varepsilon])$.

\begin{theorem} [ Prop 4.12  of \cite{GilletSoule} ]  \label{theorem: GilletSoule}
The Adams operations $\psi^{k}$ defined on perfect complexes, defined by Gillet-Soul\'e in \cite{GilletSoule}, satisfy
$\psi^{k}(F_{\bullet}(f_{1}+\varepsilon g_{1}, \cdots, f_{p}+\varepsilon g_{p})) = k^{p}F_{\bullet}(f_{1}+\varepsilon g_{1}, \cdots, f_{p}+\varepsilon g_{p})$.
\end{theorem}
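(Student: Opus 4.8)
The plan is to recognize the statement as the $\varepsilon$-deformed instance of Gillet--Soul\'e's computation of the Adams operations on Koszul complexes, carried out over the base ring $O_{X,y}[\varepsilon]$ rather than a field, and to reduce it to a one-variable base case via multiplicativity. The structural input I would use first is that the Koszul complex of a regular sequence factors as a derived tensor product of one-term Koszul complexes: writing $a_i = f_i + \varepsilon g_i$, one has
\[
F_\bullet(a_1, \ldots, a_p) \simeq \bigotimes_{i=1}^{p} F_\bullet(a_i), \qquad F_\bullet(a_i) = \bigl[\, O_{X,y}[\varepsilon] \xrightarrow{\ a_i\ } O_{X,y}[\varepsilon] \,\bigr],
\]
as perfect complexes with homology supported on $y[\varepsilon]$. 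Since each $a_i$ is a nonzerodivisor, every factor $F_\bullet(a_i)$ is a genuine two-term perfect complex defining a codimension-one (hypersurface) condition. Because Gillet--Soul\'e's $\psi^k$ is a morphism of lambda-ring structures on the category of perfect complexes, it is multiplicative for $\otimes$, so it suffices to treat a single factor and then multiply.

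I would first establish the base case $\psi^k\bigl(F_\bullet(a_i)\bigr) = k\,F_\bullet(a_i)$. Under the localization equivalence of Theorem~\ref{theorem: Balmer theorem}, $F_\bullet(a_i)$ is the Koszul resolution of $O_{X,y}[\varepsilon]/(a_i)$, whose class in $K_0(O_{X,y}[\varepsilon] \ \mathrm{on} \ y[\varepsilon])$ is $\lambda_{-1}$ of a rank-one free module twisted by $a_i$ and hence sits in the first graded piece of the $\gamma$-filtration; by the defining property of the Adams operations on perfect complexes, $\psi^k$ acts on the codimension-one part by $k^1 = k$. The crucial point is that this computation is entirely formal in the single element $a_i$ and in the base ring, so the presence of $\varepsilon$ is irrelevant: Gillet--Soul\'e's argument for a regular sequence over a commutative ring applies verbatim with the ring taken to be $O_{X,y}[\varepsilon]$ and the sequence taken to be $f_1 + \varepsilon g_1, \ldots, f_p + \varepsilon g_p$. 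Assembling the factors by multiplicativity then gives
\[
\psi^k\!\left(\bigotimes_{i=1}^{p} F_\bullet(a_i)\right) = \prod_{i=1}^{p} \psi^k\bigl(F_\bullet(a_i)\bigr) = \prod_{i=1}^{p} k\, F_\bullet(a_i) = k^{p} \bigotimes_{i=1}^{p} F_\bullet(a_i),
\]
which, after re-identifying the tensor product with $F_\bullet(a_1, \ldots, a_p)$, yields the claimed eigenvalue $k^p$.

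I expect the main obstacle to be not the formal arithmetic of the exponents but the verification that Gillet--Soul\'e's $\psi^k$ --- which is defined on the derived/homotopy level rather than by a naive formula on chain complexes --- is genuinely multiplicative for the derived tensor product and is compatible with the localization equivalence of Theorem~\ref{theorem: Balmer theorem} used to pass to the generic point $y$. In particular I would need to confirm that the eigenvalue computation is independent of the chosen resolution and survives the passage to the Verdier quotient and its idempotent completion, which is precisely where the bookkeeping of supports and the functoriality of the Gillet--Soul\'e construction must be invoked; once this compatibility is in place, the reduction to the $p=1$ hypersurface case and the multiplicative assembly above complete the argument.
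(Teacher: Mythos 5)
Your proposal is correct in outline, but you should be aware of what the paper actually does with this statement: it gives no proof at all. The result is imported wholesale as Proposition 4.12 of \cite{GilletSoule}, and the only content the paper adds is the (implicit) observation that Gillet--Soul\'e's hypothesis --- a regular sequence in a Noetherian ring --- is satisfied by $f_{1}+\varepsilon g_{1}, \cdots, f_{p}+\varepsilon g_{p}$ in $O_{X,y}[\varepsilon]$, which follows from flatness of $Y'$ over $k[\varepsilon]$. What you have written is essentially a reconstruction of the argument behind the cited proposition itself: factor the Koszul complex as $F_{\bullet}(a_{1},\ldots,a_{p}) \cong F_{\bullet}(a_{1}) \otimes \cdots \otimes F_{\bullet}(a_{p})$, use that $\psi^{k}$ is multiplicative for the product $K_{0}(A \ \mathrm{on}\ Z_{1}) \otimes K_{0}(A \ \mathrm{on}\ Z_{2}) \to K_{0}(A \ \mathrm{on}\ Z_{1}\cap Z_{2})$, and settle the rank-one case. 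Your central point --- that the presence of $\varepsilon$ is harmless because the cited result holds over an arbitrary Noetherian local base ring --- is exactly the point of the paper's citation; so relative to the paper you are doing strictly more work, and it is the right kind of work.

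Two steps need repair, however. First, your justification of the base case is the one place where the logic does not hold up as written: knowing that the class of $[\,A \xrightarrow{\ a_{i}\ } A\,]$ lies in the first step of the $\gamma$-filtration only controls the action of $\psi^{k}$ \emph{modulo the next filtration step}; the eigenvalue property holds on graded pieces, not for filtered elements, and over $A = O_{X,y}[\varepsilon]$ --- a non-reduced ring, so $K_{0}(A \ \mathrm{on}\ V(a_{i}))$ is genuinely larger than $\mathbb{Z}$ --- this distinction is not cosmetic. The correct rank-one computation is Gillet--Soul\'e's: either invoke the line-bundle formula $\psi^{k}([\,A \xrightarrow{\ a\ } A\,]) = \theta^{k}(L)\cdot[\,A \xrightarrow{\ a\ } A\,]$ with Bott class $\theta^{k}(L)=k$ since every line bundle over the local ring $A$ is trivial, or equivalently use $\psi^{k}([\,A \xrightarrow{\ a\ } A\,]) = [\,A \xrightarrow{\ a^{k}\ } A\,]$ together with the d\'evissage identity $[\,A \xrightarrow{\ a^{k}\ } A\,] = k\,[\,A \xrightarrow{\ a\ } A\,]$, which follows by filtering $A/(a^{k})$ by the ideals $(a^{i})/(a^{k})$, whose successive quotients are $A/(a)$ because $a$ is a nonzerodivisor. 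Second, your worry about compatibility with the localization equivalence of Theorem~\ref{theorem: Balmer theorem} is misplaced: the statement lives entirely in $K_{0}(O_{X,y}[\varepsilon] \ \mathrm{on} \ y[\varepsilon])$, where the Gillet--Soul\'e operations are defined directly on perfect complexes over the ring; Balmer's equivalence is used elsewhere in the paper (to localize $[i_{\ast}O_{Y'}]^{\#}$ at the generic point), not in this theorem, so no descent through Verdier quotients or idempotent completions needs to be checked here.
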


Hence, $F_{\bullet}(f_{1}+\varepsilon g_{1}, \cdots, f_{p}+\varepsilon g_{p})$ is of eigenweight $p$ and can be considered as an element of $K^{(p)}_{0}(O_{X,y}[\varepsilon] \ \mathrm{on} \ y[\varepsilon])_{\mathbb{Q}}$: 
\[
F_{\bullet}(f_{1}+\varepsilon g_{1}, \cdots, f_{p}+\varepsilon g_{p}) \in K^{(p)}_{0}(O_{X,y}[\varepsilon] \ \mathrm{on} \ y[\varepsilon])_{\mathbb{Q}}=K^{M}_{0}(O_{X,y}[\varepsilon] \ \mathrm{on} \ y[\varepsilon]).
\]

Moreover, we shall show $F_{\bullet}(f_{1}+\varepsilon g_{1}, \cdots, f_{p}+\varepsilon g_{p})$ lies in the kernel of 
\[
d^{p,-p}_{1,X[\varepsilon]}:  \bigoplus\limits_{x[\varepsilon] \in X[\varepsilon]^{(p)}} K^{M}_{0}(O_{X,x}[\varepsilon] \ \mathrm{on} \ x[\varepsilon]) \to    \bigoplus\limits_{x[\varepsilon] \in X[\varepsilon]^{(p+1)}} K^{M}_{-1}(O_{X,x}[\varepsilon] \ \mathrm{on} \ x[\varepsilon])
\]
so that it is a Milnor K-theoretic $p$-cycle:
\begin{theorem}  \label{theorem: Inkernel-LCI}
For $X$ a smooth projective variety over a field $k$ of characteristic $0$,
let $Y \subset X$ be a locally complete intersection of codimension $p$. 
Suppose $Y$ is locally defined by a regular sequence $f_{1}, \cdots, f_{p}$, where $x$ is a point on $Y$ and $f_{i} \in O_{X,x}$. A first order deformation $Y^{'}$ is locally given by lifting $f_{1}, \cdots, f_{p}$ to 
 $f_{1}+ \varepsilon g_{1}, \cdots, f_{p}+ \varepsilon g_{p}$, where $g_{i} \in O_{X,x}$.  Then the Koszul complex $ F_{\bullet}(f_{1}+\varepsilon g_{1}, \cdots, f_{p}+\varepsilon g_{p}) \in \mathrm{Ker}(d^{p,-p}_{1,X[\varepsilon]})$, i.e., $F_{\bullet}(f_{1}+\varepsilon g_{1}, \cdots, f_{p}+\varepsilon g_{p}) \in Z^{M}_{p}(D^{\mathrm{perf}}(X[\varepsilon]))$.
\end{theorem}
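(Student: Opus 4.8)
The plan is to exhibit the Koszul complex $F_{\bullet}(f_{1}+\varepsilon g_{1}, \cdots, f_{p}+\varepsilon g_{p})$ as the image, under the localization (edge) map of equivalence (2.2), of a single globally defined perfect complex on $X[\varepsilon]$ whose homology is supported in codimension exactly $p$, and then to invoke the exact couple underlying the variant Gersten complex to conclude that any class arising this way is automatically annihilated by $d^{p,-p}_{1,X[\varepsilon]}$. In particular I would avoid computing the boundary map directly; instead I rely on the general principle that classes coming from the ``global'' group $K_{0}(\mathcal{L}_{(-p)}(X[\varepsilon]))$ are cycles for the first differential.

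First I would check that $Y'$ is again a locally complete intersection of codimension $p$ inside $X[\varepsilon]$. Since $Y'$ is flat over $\mathrm{Spec}(k[\varepsilon])$ with $Y' \otimes_{k[\varepsilon]} k \cong Y$ and $f_{1}, \cdots, f_{p}$ is regular in $O_{X,x}$, the lifted sequence $f_{1}+\varepsilon g_{1}, \cdots, f_{p}+\varepsilon g_{p}$ is regular in $O_{X,x}[\varepsilon]$; hence $i_{\ast}O_{Y'}$ is a perfect complex on $X[\varepsilon]$, represented on a suitable affine neighbourhood of $Y$ by the Koszul complex, with homology supported on $Y'$. Because $X[\varepsilon]$ and $X$ have the same underlying space, $\mathrm{supph}(i_{\ast}O_{Y'}) = Y$ has codimension $p$, so $i_{\ast}O_{Y'} \in \mathcal{L}_{(-p)}(X[\varepsilon])$ and defines a class $[i_{\ast}O_{Y'}] \in K_{0}(\mathcal{L}_{(-p)}(X[\varepsilon]))$. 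By Theorem \ref{theorem: GilletSoule} this class has pure Adams eigenweight $p$, so it lies in the Milnor summand, and under the edge map $\rho$ into $K_{0}$ of the subquotient it maps, via Theorem \ref{theorem: Balmer theorem} and localization at the generic point $y$ of $Y$, precisely to $F_{\bullet}(f_{1}+\varepsilon g_{1}, \cdots, f_{p}+\varepsilon g_{p})$ in the $y$-component of $\bigoplus_{x[\varepsilon] \in X[\varepsilon]^{(p)}} K^{M}_{0}(O_{X,x}[\varepsilon] \ \mathrm{on} \ x[\varepsilon])$.

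The heart of the argument is then formal. The tower $\cdots \subset \mathcal{L}_{(-(p+1))}(X[\varepsilon]) \subset \mathcal{L}_{(-p)}(X[\varepsilon]) \subset \cdots$ produces, via the Thomason--Trobaugh localization sequences, an exact couple whose $E_{1}$-page is the variant Gersten complex appearing in the middle column of Theorem \ref{theorem: TanMInorFinal} and whose first differential is $d^{p,-p}_{1,X[\varepsilon]}$. This differential factors as $d^{p,-p}_{1,X[\varepsilon]} = \bar{\jmath}\circ\partial$, where $\partial$ is the connecting homomorphism of the localization sequence for the pair $(\mathcal{L}_{(-(p+1))}, \mathcal{L}_{(-p)})$ and $\bar{\jmath}$ is the subsequent quotient map. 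The edge map $\rho$ and $\partial$ are consecutive morphisms in that same long exact sequence, so $\partial\circ\rho = 0$ and hence $d^{p,-p}_{1,X[\varepsilon]}\circ\rho = 0$. Since $F_{\bullet}(f_{1}+\varepsilon g_{1}, \cdots, f_{p}+\varepsilon g_{p}) = \rho([i_{\ast}O_{Y'}])$ by the previous step, it lies in $\mathrm{Ker}(d^{p,-p}_{1,X[\varepsilon]})$, which equals $Z^{M}_{p}(D^{\mathrm{perf}}(X[\varepsilon]))$ by Definition \ref{definition: Milnor K-theoretic Chow groups}.

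The main obstacle I anticipate is bookkeeping rather than geometry: one must verify that this exact-couple formalism is compatible with passage to rational Adams eigenspaces (the Milnor summands) and with the idempotent completion in Theorem \ref{theorem: Balmer theorem}. This reduces to checking that the Gillet--Soul\'e operations $\psi^{k}$ commute with the localization maps and connecting homomorphisms, so that the eigenweight-$p$ projector is natural across the whole couple; granting this, the restriction of $d^{p,-p}_{1,X[\varepsilon]}\circ\rho = 0$ to the weight-$p$ part is immediate. A secondary point to confirm is that localization at $y$ genuinely carries $[i_{\ast}O_{Y'}]$ to the stated Koszul complex rather than merely to an object isomorphic to it in the quotient, which is exactly the content already recorded in the discussion preceding equivalence (2.2).
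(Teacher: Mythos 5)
Your proposal is correct, but it takes a genuinely different route from the paper. The paper proves the theorem by explicit computation: it pushes the Koszul complex into local cohomology via the Chern character (made concrete through the Ang\'eniol--Lejeune-Jalabert construction), obtaining the class $[\alpha]$ represented by $g_{1}df_{2}\wedge\cdots\wedge df_{p}$ in $H^{p}_{y}(\Omega^{p-1}_{X/\mathbb{Q}})$; it then kills $\partial_{1}^{p,-p}([\alpha])$ by hand, extending $f_{1},\cdots,f_{p}$ to a system of parameters, rewriting $g_{1}=\frac{g_{1}f_{p+1}}{f_{p+1}}$, and observing that $f_{p+1}$ occurs in the top Koszul differential so that the image class $g_{1}f_{p+1}df_{2}\wedge\cdots\wedge df_{p}$ vanishes in the relevant $\mathrm{Ext}^{p+1}$; finally it transfers the vanishing back to K-theory using the fact that $\mathrm{Ch}$ is an \emph{isomorphism} on the $K^{M}_{-1}$ row of the diagram in Theorem \ref{theorem: TanMInorFinal}. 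You instead argue formally inside K-theory: the Koszul complex is $\rho([i_{\ast}O_{Y'}])$ for the global class $[i_{\ast}O_{Y'}]\in K_{0}(\mathcal{L}_{(-p)}(X[\varepsilon]))$, and $d^{p,-p}_{1,X[\varepsilon]}\circ\rho=0$ because $d_{1}$ factors as $\bar{\jmath}\circ\partial$ while $\partial\circ\rho=0$ in the localization long exact sequence of the pair $(\mathcal{L}_{(-p-1)},\mathcal{L}_{(-p)})$, idempotent completion being harmless for non-connective K-theory. Your argument is shorter, requires no simplifying assumption $g_{2}=\cdots=g_{p}=0$ (which the paper imposes), and isolates the structural reason for the theorem: classes that extend to global deformation classes are automatically $d_{1}$-cycles. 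What the paper's computation buys in exchange is the explicit tangent form $g_{1}df_{2}\wedge\cdots\wedge df_{p}$ as the Chern image of the Koszul complex, which is reused elsewhere (the Green--Griffiths residue picture in Section 2.2, Theorem \ref{theorem: MainThmCycles}), and an argument that applies to elements of the Gersten term not known in advance to lift to $K_{0}(\mathcal{L}_{(-p)}(X[\varepsilon]))$.

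One point needs rerouting, though it does not damage the proof. You assert that the \emph{global} class $[i_{\ast}O_{Y'}]$ has pure Adams eigenweight $p$ ``by Theorem \ref{theorem: GilletSoule}''; as quoted in the paper, that theorem concerns Koszul complexes of regular sequences, i.e., it is a local statement, and purity of the global class does not follow from it directly. Fortunately you never need it: run the exact-couple argument in full Thomason--Trobaugh K-theory to conclude $d_{1}(F_{\bullet}(f_{1}+\varepsilon g_{1},\cdots,f_{p}+\varepsilon g_{p}))=0$ integrally, then note that the localized class at $y$ lies in the weight-$p$ (Milnor) summand by the local Gillet--Soul\'e theorem, and that the Milnor differential is the restriction of the full rational one, since the Adams operations commute with the coniveau differentials --- the compatibility established in \cite{Y-2} that you correctly flag as the remaining bookkeeping. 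With that adjustment your proof is complete.
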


The strategy for proving this theorem is to  use the map induced by Chern character from K-theory to negative cyclic homology
\[
\mathrm{Ch}:  K^{M}_{0}(O_{X,y}[\varepsilon] \ \mathrm{on} \ y[\varepsilon]) \to H_{y}^{p}(\Omega_{X/\mathbb{Q}}^{p-1}), 
\]
mapping $F_{\bullet}(f_{1}+\varepsilon g_{1}, \cdots, f_{p}+\varepsilon g_{p})$ to an element of  $H_{y}^{p}(\Omega_{X/\mathbb{Q}}^{p-1})$, and then show its image under the differential 
\[
\partial_{1}^{p,-p}: \bigoplus\limits_{x \in X^{(p)}}H_{x}^{p}(\Omega_{X/\mathbb{Q}}^{p-1}) \to   \bigoplus\limits_{x \in X^{(p+1)}}H_{x}^{p+1}(\Omega_{X/\mathbb{Q}}^{p-1}) 
\]
is zero.

\begin{proof}
 The map(left arrows) induced by Chern character from K-theory to negative cyclic homology in the commutative diagram of Theorem \ref{theorem: TanMInorFinal}
\[
\mathrm{Ch}:  K^{M}_{0}(O_{X,y}[\varepsilon] \ \mathrm{on} \ y[\varepsilon]) \to H_{y}^{p}(\Omega_{X/\mathbb{Q}}^{p-1}),
\]
can be described by a beautiful construction of Ang\'enoil and Lejeune-Jalabert, see Lemme  3.1.1 on page 24 and Definition 3.4 on page 29  in \cite{A-LJ} for details or Section 3 of  \cite{Y-3} for a brief summary.

For our purpose, the Ch map on the Koszul complex $ F_{\bullet}(f_{1}+\varepsilon g_{1}, \cdots, f_{p}+\varepsilon g_{p})$
can be described easily. For simplicity, we assume $g_{2}= \cdots = g_{p}=0$ in the following.  To the Koszul complex, 
\[
 \begin{CD}
  0 @>>> F_{p} @>A_{p}>> F_{p-1} @>A_{p-1}>>  \dots @>A_{2}>> F_{1} @>A_{1}>> F_{0},
 \end{CD}
\]
one defines the following class
\[
 \frac{1}{p!}dA_{1}\circ dA_{2}\circ \dots \circ dA_{p}, 
\]
where $d=d_{\mathbb{Q}}$ and each $dA_{i}$ is the matrix of absolute differentials. In other words,
\[
 dA_{i} \in \mathrm{Hom}(F_{i},F_{i-1}\otimes \Omega_{O_{X,y}[\varepsilon]/\mathbb{Q}}^{1}).
\]
The truncation map $\rfloor  \dfrac{\partial}{\partial \varepsilon}\mid_{\varepsilon =0}$ sends $\frac{1}{p!}dA_{1}\circ dA_{2}\circ \dots \circ dA_{p}$ to $g_{1}df_{2}\wedge \cdots \wedge df_{p}$.
So the image of  $ F_{\bullet}(f_{1}+\varepsilon g_{1}, \cdots, f_{p})$, under the Ch map, in  $H_{y}^{p}(\Omega_{X/\mathbb{Q}}^{p-1})$ is represented by the following diagram(an element of $Ext^{p}(O_{X,y}/(f_{1}, f_{2}, \cdots,  f_{p}), \Omega_{O_{X,y}/ \mathbb{Q}}^{p-1})$), 
\begin{equation}
\begin{cases}
 \begin{CD}
   F_{\bullet}(f_{1}, f_{2},\cdots, f_{p}) @>>> O_{X,y}/(f_{1}, f_{2}, \cdots,  f_{p}) \\
  F_{p}(\cong O_{X,y}) @>  g_{1}df_{2} \wedge \cdots \wedge df_{p}>> F_{0} \otimes \Omega_{O_{X,y}/ \mathbb{Q}}^{p-1}(\cong\Omega_{O_{X,y}/ \mathbb{Q}}^{p-1}).
 \end{CD}
 \end{cases}
\end{equation}

The regular sequence $f_{1}, \cdots, f_{p}$, where $f_{i} \in O_{X,x}$, can be extended to be a system of parameter $f_{1}, \cdots, f_{p}, f_{p+1}, \cdots, f_{d}$ in $O_{X,x}$. The prime ideals $Q_{i}:=(f_{1}, \cdots, f_{p}, f_{i})$, where $i= p+1, \cdots, d$, define generic points $z_{i} \in X^{(p+1)}$. In the following, we consider the prime $Q_{p+1}$ and the generic point $z_{p+1}$, other cases work similarly.  

Let $P=(f_{1}, \cdots, f_{p})$ be the prime ideal defining the generic point (of $Y$)$y \in X^{(p)}$, $O_{X,y}=(O_{X,z_{p+1}})_{P}$. The above diagram(2.3) can be rewritten as, denoted $[\alpha]$,
\begin{equation}
\begin{cases}
 \begin{CD}
   F_{\bullet}(f_{1}, f_{2},\cdots, f_{p}) @>>> (O_{X,z_{p+1}})_{P}/(f_{1}, f_{2}, \cdots,  f_{p}) \\
  F_{p}(\cong (O_{X,z_{p+1}})_{P}) @>  \dfrac{g_{1} f_{p+1}}{f_{p+1}}df_{2} \wedge \cdots \wedge df_{p}>> F_{0} \otimes \Omega_{(O_{X,z_{p+1}})_{P} / \mathbb{Q}}^{p-1}(\cong\Omega_{(O_{X,z_{p+1}})_{P}/ \mathbb{Q}}^{p-1}).
 \end{CD}
 \end{cases}
\end{equation}
Here, $F_{\bullet}(f_{1}, f_{2},\cdots, f_{p})$ is of the form
\[
 \begin{CD}
  0 @>>> F_{p} @>A_{p}>> F_{p-1} @>A_{p-1}>>  \dots @>A_{2}>> F_{1} @>A_{1}>> F_{0},
 \end{CD}
\]
where each $F_{i}=\bigwedge^{i}((O_{X,z_{p+1}})_{P})^{\oplus p}$.
Since $f_{p+1} \notin (f_{1}, \cdots, f_{p})$, $f_{p+1}^{-1}$ exists in $(O_{X,z_{p+1}})_{P}$, we can write $g_{1}df_{2} \wedge \cdots \wedge df_{p}= \dfrac{g_{1} f_{p+1}}{f_{p+1}}df_{2} \wedge \cdots \wedge df_{p}$.

The image of the diagram(2.4) under the differential 
\[
\partial_{1}^{p,-p}: \bigoplus\limits_{x \in X^{(p)}}H_{x}^{p}(\Omega_{X/\mathbb{Q}}^{p-1}) \to   \bigoplus\limits_{x \in X^{(p+1)}}H_{x}^{p+1}(\Omega_{X/\mathbb{Q}}^{p-1}) 
\]
 is represented by the following diagram
\[
\begin{cases}
 \begin{CD}
   F_{\bullet}(f_{1}, f_{2},\cdots, f_{p}, f_{p+1}) @>>> O_{X, z_{p+1}}/(f_{1}, f_{2}, \cdots,  f_{p}, f_{p+1}) \\
  F_{p+1}(\cong O_{X,z_{p+1}}) @>  g_{1} f_{p+1}df_{2} \wedge \cdots \wedge df_{p}>> F_{0} \otimes \Omega_{O_{X,z_{p+1}}/ \mathbb{Q}}^{p-1}(\cong\Omega_{O_{X,z_{p+1}}/ \mathbb{Q}}^{p-1}).
 \end{CD}
 \end{cases}
\]
The complex $F_{\bullet}(f_{1}, f_{2},\cdots, f_{p}, f_{p+1})$ is of the form
\[
 \begin{CD}
  0 @>>>  \bigwedge^{p+1}(O_{X, z_{p+1}})^{\oplus p+1} @>A_{p+1}>> \bigwedge^{p}(O_{X, z_{p+1}})^{\oplus p+1} @>>>  \cdots.
 \end{CD}
\]
 Let $\{e_{1}, \cdots, e_{p+1} \}$ be a basis of $(O_{X, z_{p+1}})^{\oplus p+1} $, the map $A_{p+1}$ is 
\[
 e_{1}\wedge \cdots \wedge e_{p+1}  \to \sum^{p+1}_{j=1}(-1)^{j}f_{j}e_{1}\wedge \cdots \wedge \hat{e_{j}} \wedge \cdots e_{p+1},
\]
where $\hat{e_{j}}$ means to omit the $j^{th}$ term. 

Noting $f_{p+1}$ appears in $A_{p+1}$, 
\[
g_{1} f_{p+1}df_{2} \wedge \cdots \wedge df_{p} \equiv 0 \in Ext_{O_{X,z_{p+1}}}^{p+1}(O_{X, z_{p+1}}/(f_{1}, f_{2}, \cdots,  f_{p}, f_{p+1}), \Omega_{O_{X,z_{p+1}}/ \mathbb{Q}}^{p-1}),
\]
 so $\partial_{1}^{p,-p}([\alpha])=0$. There exists the following commutative diagram, 
which is part of the commutative diagram in Theorem \ref{theorem: TanMInorFinal}
\[
\begin{CD}
 \bigoplus\limits_{x \in X^{(p)}}H_{x}^{p}(\Omega_{X/\mathbb{Q}}^{p-1}) @<<< \bigoplus\limits_{x[\varepsilon] \in X[\varepsilon]^{(p)}}K^{M}_{0}(O_{X,x}[\varepsilon] \ \mathrm{on} \ x[\varepsilon]) \\
     @V\partial_{1}^{p,-p}VV @Vd_{1,X[\varepsilon]}^{p,-p}VV \\
  \bigoplus\limits_{x \in X^{(p+1)}}H_{x}^{p+1}(\Omega_{X/\mathbb{Q}}^{p-1}) @< \cong << \bigoplus\limits_{x[\varepsilon] \in X[\varepsilon] ^{(p+1)}}K^{M}_{-1}(O_{X,x}[\varepsilon] \ \mathrm{on} \ x[\varepsilon]).
      \end{CD}
      \]
This gives the following commutative diagram
\[
\begin{CD}
 [\alpha ]      @<\mathrm{Ch}<<  F_{\bullet}(f_{1}+ \varepsilon g_{1}, f_{2},\cdots, f_{p}) \\
     @V\partial_{1}^{p,-p}VV @Vd_{1,X[\varepsilon]}^{p,-p}VV \\
     \partial_{1}^{p,-p}([\alpha ]) =0   @<\mathrm{Ch}<\cong <   d_{1,X[\varepsilon]}^{p,-p}(F_{\bullet}(f_{1}+ \varepsilon g_{1}, f_{2},\cdots, f_{p})),
      \end{CD}
      \]
which shows $d_{1,X[\varepsilon]}^{p,-p}(F_{\bullet}(f_{1}+ \varepsilon g_{1}, f_{2},\cdots, f_{p})) = 0$.

\end{proof}
 
 In general, $Y \subset X$ may not be a locally complete intersection, and the associated Koszul complex  $F_{\bullet}(f_{1}+ \varepsilon g_{1}, f_{2}, \cdots, f_{p})$ may not be a Milnor K-theoretic $p$-cycle. However, we can find another subscheme $Z \subset X$ of codimension $p$ and $Z' \in \mathrm{T_{Z}Hilb}^{p}(X)$ such that the two Koszul complexes associated $Y'$ and $Z'$ defines a Milnor K-theoretic $p$-cycle

To fix notations, let $W \subset Y$ be a subvariety of codimension 1 in $Y $, with generic point $w$. One assumes $W$ is generically defined by $f_{1}, f_{2}, \cdots, f_{p}, f_{p+1}$ and $Y$ is generically defined by $f_{1}, f_{2}, \cdots, f_{p}$. Let $y$ be the generic point of $Y$, one  has $O_{X,y}=(O_{X,w})_{P}$, where $P$ is the idea $(f_{1}, f_{2}, \cdots, f_{p}) \subset O_{X,w}$. 

$Y'$ is generically given by $(f_{1}+ \varepsilon g_{1}, f_{2}+\varepsilon g_{2}, \cdots, f_{p}+ \varepsilon g_{p})$, where $g_{i} \in O_{X,y}=(O_{X,w})_{P}$. For simplicity, we assume $g_{2}= \cdots = g_{p}=0$. We can write $g_{1}= \dfrac{a}{b}$, where $a,b \in O_{X,w}$ and $b \notin P$. $b$ is either in or not in the maximal idea $(f_{1}, f_{2}, \cdots, f_{p}, f_{p+1}) \subset O_{X,w}$.

 \begin{theorem} [Theorem 4.7 in \cite{Y-3}]  \label{theorem: MainThmCycles}
 For $Y' \in \mathrm{T_{Y}Hilb}^{p}(X)$ which is generically defined by $(f_{1}+ \varepsilon g_{1}, f_{2}, \cdots, f_{p})$, where $g_{1}= \dfrac{a}{b} \in O_{X,y}=(O_{X,w})_{P}$, we use $F_{\bullet}(f_{1}+ \varepsilon g_{1}, f_{2}, \cdots, f_{p})$ to denote the Koszul complex associated to $f_{1}+ \varepsilon g_{1}, f_{2}, \cdots, f_{p}$,
 
\begin{itemize}
\item Case 1: If $b \notin (f_{1}, f_{2}, \cdots, f_{p}, f_{p+1})$,  then \
 $F_{\bullet}(f_{1}+ \varepsilon g_{1}, f_{2}, \cdots, f_{p}) \in Z^{M}_{p}(D^{\mathrm{Perf}}(X[\varepsilon]))$. \ \

\item Case 2: if $b \in (f_{1}, f_{2}, \cdots, f_{p}, f_{p+1})$, we reduce to considering $b=f_{p+1}$. Then there exists $Z \subset X$ which is generically defined by $(f_{p+1}, f_{2}, \cdots, f_{p})$ and exists $Z' \in \mathrm{T_{Z}Hilb}^{p}(X)$ which is generically defined by $(f_{p+1}+ \varepsilon \dfrac{a}{f_{1}}, f_{2}, \cdots, f_{p})$
such that $F_{\bullet}(f_{1}+ \varepsilon \dfrac{a}{f_{p+1}}, f_{2}, \cdots, f_{p})+F_{\bullet}(f_{p+1}+ \varepsilon \dfrac{a}{f_{1}}, f_{2}, \cdots, f_{p}) \in Z^{M}_{p}(D^{\mathrm{Perf}}(X[\varepsilon]))$. 
 
\end{itemize}
 \end{theorem}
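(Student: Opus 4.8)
The plan is to reduce the assertion $F_{\bullet}(\cdots) \in \mathrm{Ker}(d^{p,-p}_{1,X[\varepsilon]})$ to a local residue computation at the codimension-$(p+1)$ points lying in $\overline{\{y\}}$, following the proof of Theorem~\ref{theorem: Inkernel-LCI} line for line. First I would apply the Chern character $\mathrm{Ch}$ and the Ang\'{e}niol--Lejeune-Jalabert description of it used there, so that the tangent class of $F_{\bullet}(f_{1}+\varepsilon g_{1}, f_{2}, \ldots, f_{p})$ at $y$ is represented by $g_{1}\, df_{2} \wedge \cdots \wedge df_{p}$, and its image under $\partial^{p,-p}_{1}$ is the sum of residues $\partial_{w}$ over the generic points $w$ of the codimension-$1$ subvarieties of $Y$. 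Since $g_{1}=a/b \in O_{X,y}=(O_{X,w})_{P}$ has poles only along $\{b=0\}\cap Y$, after the reduction $b=f_{p+1}$ the residue can be nonzero only at the single point $w$, so the whole problem collapses to computing one residue.

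For Case 1, if $b \notin (f_{1}, \ldots, f_{p}, f_{p+1})$ then $b$ is a unit in $O_{X,w}$, so $g_{1}\in O_{X,w}$ is regular at $w$. I would write $g_{1}=(g_{1}f_{p+1})/f_{p+1}$ and run the computation of Theorem~\ref{theorem: Inkernel-LCI} verbatim: $\partial_{w}$ is then represented by $g_{1}f_{p+1}\, df_{2}\wedge\cdots\wedge df_{p}$ over the Koszul complex $F_{\bullet}(f_{1}, \ldots, f_{p}, f_{p+1})$, which vanishes because $f_{p+1}$ occurs among the Koszul generators (it appears in $A_{p+1}$). As $w$ is the only point where a pole could occur, this gives $F_{\bullet}(f_{1}+\varepsilon g_{1}, f_{2}, \ldots, f_{p}) \in Z^{M}_{p}(D^{\mathrm{Perf}}(X[\varepsilon]))$.

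For Case 2, with $g_{1}=a/f_{p+1}$ a genuine simple pole along $W$, clearing the denominator $f_{p+1}$ shows that $\partial_{w}$ of $F_{\bullet}(f_{1}+\varepsilon \tfrac{a}{f_{p+1}}, f_{2}, \ldots, f_{p})$ is represented by the numerator $a\, df_{2}\wedge\cdots\wedge df_{p}$ over $F_{\bullet}(f_{1}, f_{2}, \ldots, f_{p}, f_{p+1})$, which is in general nonzero since $a \notin (f_{1}, \ldots, f_{p+1})$. The correction term is engineered to cancel exactly this. For $Z$ generically defined by $(f_{p+1}, f_{2}, \ldots, f_{p})$ with generic point $z$, one has $O_{X,z}=(O_{X,w})_{P_{Z}}$ with $P_{Z}=(f_{p+1}, f_{2}, \ldots, f_{p})$ and $f_{1}\notin P_{Z}$ (since $f_{1}, \ldots, f_{p+1}$ is a regular sequence), so $a/f_{1}\in O_{X,z}$ and the tangent class of $F_{\bullet}(f_{p+1}+\varepsilon\tfrac{a}{f_{1}}, f_{2}, \ldots, f_{p})$ at $z$ is $\tfrac{a}{f_{1}}\, df_{2}\wedge\cdots\wedge df_{p}$, with its only pole again along $W$. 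Clearing the denominator $f_{1}$, its residue at $w$ is represented by the same numerator $a\, df_{2}\wedge\cdots\wedge df_{p}$, but now over $F_{\bullet}(f_{p+1}, f_{2}, \ldots, f_{p}, f_{1})$. The two residues are thus the identical datum over Koszul complexes on $\{f_{1}, \ldots, f_{p+1}\}$ in the orders $(f_{1}, f_{2}, \ldots, f_{p}, f_{p+1})$ and $(f_{p+1}, f_{2}, \ldots, f_{p}, f_{1})$, which differ by the transposition exchanging $f_{1}$ and $f_{p+1}$; under this reordering the complex, and hence the represented $\mathrm{Ext}^{p+1}$-class, changes by the sign $-1$. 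Therefore $\partial_{w}$ of the sum vanishes, and since $w$ is the only codimension-$(p+1)$ point at which either summand has a nonzero residue, $F_{\bullet}(f_{1}+\varepsilon\tfrac{a}{f_{p+1}}, f_{2}, \ldots, f_{p}) + F_{\bullet}(f_{p+1}+\varepsilon\tfrac{a}{f_{1}}, f_{2}, \ldots, f_{p}) \in Z^{M}_{p}(D^{\mathrm{Perf}}(X[\varepsilon]))$.

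The hard part will be making the residue recipe precise: one must extract from the Ang\'{e}niol--Lejeune-Jalabert formula for $\mathrm{Ch}$ that $\partial_{w}$ of a simple-pole class $\tfrac{N}{s}\, df_{2}\wedge\cdots\wedge df_{p}$ (with $N$ regular at $w$ and $s$ the local equation of $W$) is represented by the numerator $N\, df_{2}\wedge\cdots\wedge df_{p}$ over the Koszul complex obtained by appending $s$ to the defining sequence, together with the correct sign. Theorem~\ref{theorem: Inkernel-LCI} already supplies this computation in the regular case; the genuinely new content is the simple-pole case and the careful bookkeeping of the reordering sign, which is precisely what dictates the shapes $a/f_{p+1}$ and $a/f_{1}$ of the two deformations and forces their residues to cancel.
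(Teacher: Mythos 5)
Your proposal is correct and follows essentially the same route as the paper: Case 1 is the argument of Theorem \ref{theorem: Inkernel-LCI} (rewrite $g_{1}$ as $\tfrac{g_{1}f_{p+1}}{f_{p+1}}$, push it to the numerator class over $F_{\bullet}(f_{1},\dots,f_{p},f_{p+1})$, and observe it vanishes in $\mathrm{Ext}^{p+1}$ because $f_{p+1}$ occurs in $A_{p+1}$), while Case 2 is the codimension-$p$ generalization of the Green--Griffiths cancellation in the proof of Theorem \ref{theorem: GG-inKernel}, with the two numerator classes living over the orderings $(f_{1},\dots,f_{p},f_{p+1})$ and $(f_{p+1},f_{2},\dots,f_{p},f_{1})$ and hence differing by the transposition sign $-1$. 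The paper itself defers the formal proof to Theorem 4.7 of \cite{Y-3}, but these are precisely its ingredients, so your reconstruction is faithful.
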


\subsection{Why take kernel}
\label{Why take kernel} 
 In this subsection, we explain the reasons why we use the kernel of $d^{p,-p}_{1, X_{j}}$ to define $Z^{M}_{p}(D^{\mathrm{perf}}(X_{j}))$ in Definition ~\ref{definition: Milnor K-theoretic Chow groups}, instead of taking $\bigoplus\limits_{x_{j} \in X_{j}^{(p)}}K^{M}_{0}(O_{X_{j},x_{j}} \ \mathrm{on} \ x_{j})$. 
 
 1). As explained in the beginning of Section 2.1, in general, the length of the perfect complex $[i_{\ast}O_{Y'}]$, which is the resolution of $i_{\ast}O_{Y'}$, may not equal to $p$. To modify this, we need to look at its image $[i_{\ast}O_{Y'}]^{\#}$ in the idempotent completion $(\mathcal{L}_{(-p)}(X[\varepsilon])/\mathcal{L}_{(-p-1)}(X[\varepsilon]))^{\#}$. From the K-theoretic viewpoint, taking idempotent completion can result in the appearance of negative K-groups. We should include negative K-groups into 
 the study of deformation of cycles, so we use the kernel of $d^{p,-p}_{1, X_{j}}$ to define $Z^{M}_{p}(D^{\mathrm{perf}}(X_{j}))$. In Section 3.2, negative K-groups will be used for obstruction issues.
 
 2). From the geometric viewpoint,  taking the kernel of 
 \[
 d^{p,-p}_{1, X[\varepsilon]}:  \bigoplus\limits_{x[\varepsilon] \in X[\varepsilon]^{(p)}}K^{M}_{0}(O_{X,x}[\varepsilon] \ \mathrm{on} \ x[\varepsilon])  \to  \bigoplus\limits_{x[\varepsilon] \in X[\varepsilon] ^{(p+1)}}K^{M}_{-1}(O_{X,x}[\varepsilon] \ \mathrm{on} \ x[\varepsilon])
  \]
 to define $Z^{M}_{p}(D^{\mathrm{perf}}(X[\varepsilon]))$ can produce the desirable tangent space. This can be explained by the following example.
 
Let $X$ be a smooth projective surface over a field $k$ of characteristic $0$, we consider the 1-cycles $Z^{1}(X)$ on $X$ and study its tangent space $TZ^{1}(X)$. For simplicity, we look at the sheaf level, that is, we look at the tangent sheaf  $\underline{T}Z^{1}(X)$ to the 1-cycles $Z^{1}(X)$.

Let $\underline{Z}^{1}(X)$ be the Zariski sheaf of 1-cycles on $X$, we have the following short exact sequence of sheaves:
\[
0 \to O_{X}^{\ast} \to K(X)^{\ast}  \xrightarrow{\mathrm{div}} \underline{Z}^{1}(X) \to 0,
\]
where $K(X)$ is the function field of $X$. It is known that the tangent sheaves to $O_{X}^{\ast}$ and $K(X)^{\ast}$ are 
$O_{X}$ and $K(X)$ respectively, and there exists the following short exact sequence of sheaves:
\begin{equation}
0 \to O_{X} \to K(X)  \to PP_{X} \to 0,
\end{equation}
where $PP_{X}$ is the sheaf of principal parts. This suggests that
\begin{definition}[page 100 \cite{GGtangentspace}] \label{definition: GG's PP part}
The tangent sheaf  $\underline{T}Z^{1}(X)$ to the 1-cycles $Z^{1}(X)$ is defined to be
\[
\underline{T}Z^{1}(X):=PP_{X}.
\]

\end{definition} 

To related this definition with the formal Definition \ref{definition: tangentspace cycles}, we note that the Cousin resolution of  $O_{X}$ is 
\begin{equation}
0 \to O_{X}  \to K(X) \to \bigoplus\limits_{y \in X^{(1)}}i_{y, \ast}H_{y}^{1}(O_{X})  \xrightarrow{\partial^{1,-2}_{1}} \bigoplus\limits_{x\in X^{(2)}} i_{x, \ast}H_{x}^{2}(O_{X}) \to 0.
\end{equation}

For $X$ a smooth projective surface over a field $k$ of characteristic $0$, taking $p=1$ in Theorem \ref{theorem: compute tangent space},
we see the tangent sheaf  is $\mathrm{ker}(\partial^{1,-2}_{1})$.  The two exact sequences(2.5)(2.6) show that 
\[
PP_{X} \cong K(X)/O_{X} \cong \mathrm{ker}(\partial^{1,-2}_{1}).
\]
This proves:
\begin{corollary} \label{corollary: agreeGGSheaf}
For 1-cycles $Z^{1}(X)$ on $X$, the formal Definition \ref{definition: tangentspace cycles}(at the sheaf level) agrees with the Definition \ref{definition: GG's PP part} by Green-Griffiths.

\end{corollary}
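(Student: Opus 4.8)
The plan is to show that each of the two definitions computes the \emph{same} sheaf, namely the quotient $K(X)/O_X$; once this is established the agreement is immediate. The conceptual point that makes the surface case $p=1$ special is that the coefficient sheaf $\Omega^{p-1}_{X/\mathbb{Q}}$ degenerates to $\Omega^0_{X/\mathbb{Q}} = O_X$, so the Cousin complex controlling the formal tangent space becomes precisely the Cousin resolution (2.6) of the structure sheaf.

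First I would analyze the Green-Griffiths side. Applying the tangent functor $T(-)=\ker\{(-)(X[\varepsilon])\xrightarrow{\varepsilon=0}(-)(X)\}$ to the divisor sequence $0 \to O_X^{\ast} \to K(X)^{\ast} \xrightarrow{\mathrm{div}} \underline{Z}^1(X) \to 0$, the essential local input is the computation of first-order deformations of units: a unit of $O_X[\varepsilon]^{\ast}$ has the form $u(1+\varepsilon a)$ with $a \in O_X$, so $1+\varepsilon a \mapsto a$ identifies the tangent sheaf of $O_X^{\ast}$ with $O_X$, and similarly that of $K(X)^{\ast}$ with $K(X)$. This produces the exact sequence (2.5), and by Definition \ref{definition: GG's PP part} we obtain $\underline{T}Z^1(X) = PP_X = \mathrm{coker}(O_X \hookrightarrow K(X)) = K(X)/O_X$.

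Next I would analyze the formal side. Specializing Theorem \ref{theorem: compute tangent space} to $p=1$ and reading it at the sheaf level gives $\underline{T}Z^1(X) \cong \ker(\partial^{1,-2}_{1})$, where $\partial^{1,-2}_{1}$ is now the differential of the Cousin resolution of $\Omega^0_{X/\mathbb{Q}} = O_X$, i.e. the complex (2.6). Since (2.6) is exact, the segment $0 \to O_X \to K(X) \to \bigoplus_{y \in X^{(1)}} i_{y,\ast}H^1_y(O_X)$ shows that $\ker(\partial^{1,-2}_{1})$ equals the image of $K(X)$ in $\bigoplus_{y} i_{y,\ast}H^1_y(O_X)$; as the kernel of that map is exactly $O_X$, this image is $K(X)/O_X$.

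Combining the two computations, both $PP_X$ and $\ker(\partial^{1,-2}_{1})$ are canonically isomorphic to $K(X)/O_X$, which is the claimed agreement between Definition \ref{definition: tangentspace cycles} (at the sheaf level) and Definition \ref{definition: GG's PP part}. The step I expect to require the most care is not the final comparison but the verification that the tangent functor sends the short exact sequence of units sheaves to the exact sequence (2.5) --- in particular the surjectivity of $K(X) \to PP_X$, which expresses that every first-order deformation of a principal divisor is induced by a first-order deformation of a defining rational function. This exactness is precisely what legitimizes the identification $PP_X = K(X)/O_X$ and is the hinge on which the whole comparison rests.
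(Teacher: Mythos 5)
Your proposal is correct and follows essentially the same route as the paper: identify $PP_X$ with $K(X)/O_X$ via the exact sequence (2.5), identify $\ker(\partial^{1,-2}_{1})$ with $K(X)/O_X$ via the exactness of the Cousin resolution (2.6) together with Theorem \ref{theorem: compute tangent space} at $p=1$, and conclude. The only difference is that you spell out the derivation of (2.5) from the tangent functor applied to the units sheaves (deformations $u(1+\varepsilon a)$), which the paper simply quotes as known.
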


If we don't use the kernel of $d^{1,-1}_{1, X[\varepsilon]}$, but use $\bigoplus\limits_{y[\varepsilon] \in X[\varepsilon]^{(1)}}K_{0}(O_{X,y}[\varepsilon] \ \mathrm{on} \ y[\varepsilon])$, to define Milnor K-theoretic 1-cycles, then the tangent sheaf  becomes $\bigoplus\limits_{y \in X^{(1)}} H_{y}^{1}(O_{X})$, which is obviously not the desirable one.

In the next, combining with Green-Griffiths' results in \cite{GGtangentspace}, we construct a concrete element of the kernel of $d^{p,-p}_{1, X[\varepsilon]}$.

\begin{theorem} \label{theorem: whykernel}
Let $X$ be a smooth projective surface over a field $k$ of characteristic $0$, for $p=1$ in $\mathrm{Theorem \ ~\ref{theorem: TanMInorFinal}}$, we have the following commutative diagram. The left arrows are induced by Chern character from K-theory to negative cyclic homology and the right ones are the natural maps sending $\varepsilon$ to $0$:
\[\displaystyle
  \begin{CD}
     0 @. 0 @. 0\\
     @VVV @VVV @VVV\\
     k(X) @<<<  K_{1}(k(X)[\varepsilon])_{\mathbb{Q}} @>>> K_{1}(k(X))_{\mathbb{Q}} \\
     @VVV @VVV @VVV\\
     \bigoplus\limits_{y \in X^{(1)}}H_{y}^{1}(O_{X}) @<<<  \bigoplus\limits_{y[\varepsilon] \in X[\varepsilon]^{(1)}}K_{0}(O_{X,y}[\varepsilon] \ \mathrm{on} \ y[\varepsilon])_{\mathbb{Q}} @>>> \bigoplus\limits_{y\in X^{(1)}}K_{0}(O_{X,y} \ \mathrm{on} \ y)_{\mathbb{Q}} \\
     @V \partial^{1,-1}_{1}VV @Vd^{1,-1}_{1,X[\varepsilon]}VV @VVV\\
     \bigoplus\limits_{x\in X^{(2)}}H_{x}^{2}(O_{X}) @<<\cong<  \bigoplus\limits_{x[\varepsilon] \in X[\varepsilon]^{(2)}}K_{-1}(O_{X,x}[\varepsilon] \ \mathrm{on} \ x[\varepsilon])_{\mathbb{Q}} @>>> \bigoplus\limits_{x\in X^{(2)}}K_{-1}(O_{X,x} \ \mathrm{on} \ x)_{\mathbb{Q}} =0 \\
     @VVV @VVV @VVV\\
      0 @. 0 @. 0.
  \end{CD}
\]
\end{theorem}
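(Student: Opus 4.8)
The plan is to obtain the asserted diagram as the specialization of Theorem~\ref{theorem: TanMInorFinal} to the case $p=1$ with $d=\dim X=2$, followed by a few elementary identifications. First I would set $p=1$ and record that $\Omega^{p-1}_{X/\mathbb{Q}}=\Omega^{0}_{X/\mathbb{Q}}=O_{X}$. Consequently the left column, which by Theorem~\ref{theorem: TanMInorFinal} sheafifies to a flasque resolution of $\Omega^{p-1}_{X/\mathbb{Q}}$, becomes the Cousin resolution of $O_{X}$, namely $0\to O_{X}\to k(X)\to \bigoplus_{y\in X^{(1)}}H^{1}_{y}(O_{X})\to \bigoplus_{x\in X^{(2)}}H^{2}_{x}(O_{X})\to 0$; since $X$ is a surface this terminates in codimension $2$, leaving exactly the three nonzero left-hand entries displayed.

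Next I would replace the Milnor K-groups of Theorem~\ref{theorem: TanMInorFinal} by the full rational K-groups. For $p=1$ every entry of the middle and right columns sits at codimension $i$ in homological degree $m=1-i$, hence in Adams weight $m+i=1$; by the weight computations underlying Definition~\ref{definition:Milnor K-theory with support} (compare the Koszul computation of Theorem~\ref{theorem: GilletSoule}) the weight-$1$ eigenspace exhausts the rational K-group in each of these positions, so $K^{M}_{1}(k(X)[\varepsilon])=K_{1}(k(X)[\varepsilon])_{\mathbb{Q}}$, $K^{M}_{0}(O_{X,y}[\varepsilon]\ \mathrm{on}\ y[\varepsilon])=K_{0}(O_{X,y}[\varepsilon]\ \mathrm{on}\ y[\varepsilon])_{\mathbb{Q}}$, and likewise for $K^{M}_{-1}$ and for the $\varepsilon=0$ groups in the right column.

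I would then dispose of the two extreme entries of the bottom row. On the right, $O_{X,x}$ is a two-dimensional regular local ring, so its negative K-theory with support vanishes and $K_{-1}(O_{X,x}\ \mathrm{on}\ x)_{\mathbb{Q}}=0$, which gives the displayed $0$. On the left, the bottom Chern-character arrow $\bigoplus_{x}K_{-1}(O_{X,x}[\varepsilon]\ \mathrm{on}\ x[\varepsilon])_{\mathbb{Q}}\to \bigoplus_{x}H^{2}_{x}(O_{X})$ is an isomorphism: its source is the relative (tangential) negative K-group, which the identification of relative K-theory with relative negative cyclic homology underlying the Chern-character arrows of Theorem~\ref{theorem: TanMInorFinal} carries isomorphically onto the local-cohomology term $H^{2}_{x}(O_{X})$. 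Commutativity of all squares and exactness of the three columns are then inherited directly from Theorem~\ref{theorem: TanMInorFinal}.

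The step I expect to be the genuine obstacle is this last one: verifying that $K_{-1}(O_{X,x}[\varepsilon]\ \mathrm{on}\ x[\varepsilon])_{\mathbb{Q}}$ is nonzero and matches $H^{2}_{x}(O_{X})$ isomorphically, since this is precisely where the nilpotent thickening forces a negative K-group to appear and where the vanishing valid on the reduced ring $O_{X,x}$ is lost. Once the diagram is in place, the two Green--Griffiths exact sequences above (the principal-parts sequence and the Cousin resolution of $O_{X}$), together with the identification $PP_{X}\cong k(X)/O_{X}\cong \ker(\partial^{1,-1}_{1})$ from Definition~\ref{definition: GG's PP part} and Corollary~\ref{corollary: agreeGGSheaf}, let one read off a concrete element of $\ker(d^{1,-1}_{1,X[\varepsilon]})$: a rational function $f\in k(X)$ yields the unit $1+\varepsilon f\in K_{1}(k(X)[\varepsilon])$, whose image in the middle column lies in $\ker(d^{1,-1}_{1,X[\varepsilon]})$ because it descends from the term immediately above it.
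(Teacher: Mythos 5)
Your proposal is correct and takes essentially the same route as the paper: the paper also obtains the diagram by specializing Theorem~\ref{theorem: TanMInorFinal} to $p=1$ and then identifying each Milnor K-group with the full rational K-group via the Adams-eigenspace argument, namely $K^{(j)}_{0}(O_{X,y}\ \mathrm{on}\ y)_{\mathbb{Q}}\cong K^{(j-1)}_{0}(k(y))=0$ except for $j=1$ (Riemann--Roch without denominators), with ``similar arguments'' for the remaining entries of the middle and right columns. Your additional checks --- the vanishing $K_{-1}(O_{X,x}\ \mathrm{on}\ x)_{\mathbb{Q}}=0$ on the reduced side and the identification of the purely relative group $\bigoplus_{x}K_{-1}(O_{X,x}[\varepsilon]\ \mathrm{on}\ x[\varepsilon])_{\mathbb{Q}}\cong\bigoplus_{x}H^{2}_{x}(O_{X})$ --- are exactly the ingredients the paper draws from its framework (Theorem~\ref{theorem: CompRelatK}), so they strengthen rather than alter the argument.
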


Let's explain why one can use $K_{0}(O_{X,y} \ \mathrm{on} \ y)_{\mathbb{Q}}$ to replace $K^{M}_{0}(O_{X,y}\ \mathrm{on} \ y)$(defined in Definition ~\ref{definition:Milnor K-theory with support}) in the above diagram. 
One notes that $K^{(j)}_{0}(O_{X,y} \ \mathrm{on} \ y)_{\mathbb{Q}}\cong K^{(j-1)}_{0}(k(y))=0$, except for $j=1$. That is ,
\[
 K^{(1)}_{0}(O_{X,y}\ \mathrm{on} \ y)_{\mathbb{Q}} = K_{0}(O_{X,y} \ \mathrm{on} \ y)_{\mathbb{Q}}.
\]
This says $K^{M}_{0}(O_{X,y}\ \mathrm{on} \ y) = K_{0}(O_{X,y} \ \mathrm{on} \ y)_{\mathbb{Q}}$.
Similar arguments for other K-groups in the middle and right columns in the above diagram.

Let $Y_{1}$ and $Y_{2}$ be two curves on $X$ with generic point $y_{1}$ and $y_{2}$ respectively.  For simplicity, we work locally in Zariski topology and assume $Y_{1}$ and $Y_{2}$ intersect transversely at a point $x$. Around the point $x$, we can write 
\[
  Y_{1}= \mathrm{div}(f_{1});  \ Y_{2}= \mathrm{div}(f_{2}).
\]

Take $g \in O_{X,x}$ such that $g(x) \neq 0$,  we consider $O_{X,x}[\varepsilon]/ (f_{1}f_{2} + \varepsilon g)$. The Koszul resolution of  $O_{X,x}[\varepsilon]/ (f_{1}f_{2} + \varepsilon g)$,
\[
L^{\bullet}: 0 \to O_{X,x}[\varepsilon] \xrightarrow{f_{1}f_{2} + \varepsilon g} O_{X,x}[\varepsilon],
\]
defines an element of $K_{0}(\mathcal{L}_{-1}(X[\varepsilon])/\mathcal{L}_{-2}(X[\varepsilon]))^{\#})$.

\begin{theorem} \label{theorem: GG-inKernel}
$L^{\bullet} \in \mathrm{Ker}(d^{1,-1}_{1,X[\varepsilon]})$, i.e., $L^{\bullet} \in Z^{M}_{1}(D^{\mathrm{perf}}(X[\varepsilon]))$.
\end{theorem}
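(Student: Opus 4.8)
The plan is to mimic the strategy used in the proof of Theorem~\ref{theorem: Inkernel-LCI}: apply the Chern character to negative cyclic homology, transport the problem to the Cousin complex of $O_X$, and exploit that consecutive differentials in that complex compose to zero. Concretely, I would invoke the commutative diagram of Theorem~\ref{theorem: whykernel} (the $p=1$ specialization of Theorem~\ref{theorem: TanMInorFinal}), in which the bottom horizontal Chern character
\[
\mathrm{Ch}: \bigoplus_{x[\varepsilon]\in X[\varepsilon]^{(2)}} K_{-1}(O_{X,x}[\varepsilon]\ \mathrm{on}\ x[\varepsilon])_{\mathbb{Q}} \xrightarrow{\ \cong\ } \bigoplus_{x\in X^{(2)}} H^2_x(O_X)
\]
is an isomorphism. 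Because of this isomorphism, it suffices to show that the image of $L^\bullet$ under the composite $\partial_1^{1,-1}\circ \mathrm{Ch}$ vanishes.

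First I would compute $\mathrm{Ch}(L^\bullet)$. By Balmer's equivalence (Theorem~\ref{theorem: Balmer theorem}) the class $L^\bullet$ decomposes over the generic points of its support, which are exactly $y_1$ and $y_2$ (the generic points of $Y_1$ and $Y_2$), since modulo $\varepsilon$ one has $O_{X,x}[\varepsilon]/(f_1f_2+\varepsilon g)\cong O_{X,x}/(f_1f_2)$ and $g(x)\neq 0$. Using the Ang\'enoil--Lejeune-Jalabert description of the Chern character recalled in the proof of Theorem~\ref{theorem: Inkernel-LCI} (here $p=1$, so $\Omega^{p-1}_{X/\mathbb{Q}}=O_X$ and no $df$ factors appear), the tangent of the length-one Koszul complex $F_\bullet(f_1f_2+\varepsilon g)$ is represented, at each generic point $y$ of $\mathrm{div}(f_1f_2)$, by the class $g \in \mathrm{Ext}^1_{O_{X,y}}(O_{X,y}/(f_1f_2), O_{X,y})$. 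Under the standard identification $H^1_y(O_X)\cong k(X)/O_{X,y}$ this class is precisely $\tfrac{g}{f_1f_2}$ modulo $O_{X,y}$.

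The key observation is then that the two local contributions assemble into the image of a single global rational function: $\mathrm{Ch}(L^\bullet)$ equals the image of $\tfrac{g}{f_1f_2}\in k(X)$ under the left vertical map $k(X)\to \bigoplus_{y\in X^{(1)}} H^1_y(O_X)$ of the diagram in Theorem~\ref{theorem: whykernel}. Since this map followed by $\partial_1^{1,-1}$ is a piece of the Cousin resolution of $O_X$, the composite is zero, hence $\partial_1^{1,-1}(\mathrm{Ch}(L^\bullet))=0$. Chasing the commutative square, with the bottom Chern character an isomorphism, then forces $d_{1,X[\varepsilon]}^{1,-1}(L^\bullet)=0$, which is exactly $L^\bullet\in \mathrm{Ker}(d_{1,X[\varepsilon]}^{1,-1})=Z^M_1(D^{\mathrm{perf}}(X[\varepsilon]))$.

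The main obstacle I anticipate is the second step: carefully justifying the identification of $\mathrm{Ch}(L^\bullet)$ with $\tfrac{g}{f_1f_2}$ at both generic points simultaneously. The complex $f_1f_2+\varepsilon g$ is supported on the \emph{reducible} divisor $Y_1\cup Y_2$, so one must check that the Ang\'enoil--Lejeune-Jalabert class localizes correctly to each of $y_1$ and $y_2$ (where $f_2$, respectively $f_1$, becomes a unit) and that these two pieces are precisely the two principal parts of the one rational function $\tfrac{g}{f_1f_2}$; this reducibility is exactly the phenomenon the example is designed to illustrate. Once this identification is in place, the vanishing of the boundary is automatic from the Cousin sequence being a complex, and no direct residue computation at $x$ is needed.
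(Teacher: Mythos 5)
Your proposal is correct, and it reaches the conclusion by a genuinely different route at the crucial step. You and the paper agree up to the midpoint: both decompose $L^{\bullet}$ via Balmer's equivalence into the two pieces $L^{\bullet}_{1}$, $L^{\bullet}_{2}$ supported at $y_{1}$, $y_{2}$, and both compute their Chern character images $\alpha$, $\beta$ as $\mathrm{Ext}^{1}$ classes (your identification of these with $\tfrac{g}{f_{1}f_{2}} \bmod O_{X,y_{i}}$ under $H^{1}_{y_{i}}(O_{X}) \cong k(X)/O_{X,y_{i}}$ is consistent with the paper's classes $g/f_{2}$ and $g/f_{1}$). Where you diverge is in proving $\partial^{1,-1}_{1}(\alpha+\beta)=0$: the paper computes the two boundary classes explicitly as elements of $\mathrm{Ext}^{2}_{O_{X,x}}(O_{X,x}/(f_{1},f_{2}),O_{X,x})$ via the Koszul complexes $F_{\bullet}(f_{1},f_{2})$ and $F_{\bullet}(f_{2},f_{1})$, and invokes Green--Griffiths' sign observation (the swap matrix $M$ with determinant $-1$) to see they cancel; you instead observe that $\alpha+\beta$ is exactly the image of the single rational function $\tfrac{g}{f_{1}f_{2}} \in k(X)$ under the first Cousin map, so the vanishing is automatic because consecutive maps in the Cousin resolution of $O_{X}$ compose to zero. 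Your route is shorter and more conceptual --- it exhibits $L^{\bullet}$ directly as the K-theoretic avatar of a principal part, which dovetails with the paper's own identification $PP_{X}\cong \mathrm{ker}(\partial^{1,-2}_{1})$ in Corollary 2.17 and explains a priori why the class must be a cycle --- while the paper's computation makes the sign cancellation explicit and mirrors the residue picture ($g$ and $-g$) that follows the theorem. The cost of your approach is the compatibility you flag yourself: one must check that the Ang\'eniol--Lejeune-Jalabert Chern character of each localized complex really is the principal part of $\tfrac{g}{f_{1}f_{2}}$ at that point; this is a genuine verification, but it is essentially the same local $\mathrm{Ext}^{1}$ computation the paper performs to produce $\alpha$ and $\beta$, so nothing beyond the paper's own toolkit is needed. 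Note also that both arguments are local around $x$ (the paper's standing assumption), which is what guarantees that $\tfrac{g}{f_{1}f_{2}}$ has poles only along $Y_{1}\cup Y_{2}$, i.e.\ that its principal parts are supported exactly at $y_{1}$ and $y_{2}$.
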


\begin{proof}
Under the isomorphism in Theorem \ref{theorem: Balmer theorem}
\[
 K_{0}((\mathcal{L}_{(-1)}(X[\varepsilon])/\mathcal{L}_{(-2)}(X[\varepsilon]))^{\#})  \simeq \bigoplus\limits_{y[\varepsilon] \in X[\varepsilon]^{(1)}}K_{0}(D_{{y[\varepsilon]}}^{\mathrm{perf}}(X[\varepsilon])),
\]
$L^{\bullet}$ decomposes into the direct sum of 
\[
 L^{\bullet}_{1}: 0 \to (O_{X,x})_{(f_{1})}[\varepsilon] \xrightarrow{f_{1} + \varepsilon \dfrac{g}{f_{2}}}  (O_{X,x})_{(f_{1})}[\varepsilon] 
\]
and
\[
 L^{\bullet}_{2}: 0 \to (O_{X,x})_{(f_{2})}[\varepsilon] \xrightarrow{f_{2} + \varepsilon \dfrac{g}{f_{1}}}  (O_{X,x})_{(f_{2})}[\varepsilon].
\]
Noting $O_{X,y_{1}}=(O_{X,x})_{(f_{1})}$, we have $L^{\bullet}_{1} \in K_{0}(O_{X,y_{1}}[\varepsilon] \ \mathrm{on} \ y_{1}[\varepsilon])$. Similarly,  $L^{\bullet}_{2} \in K_{0}(O_{X,y_{2}}[\varepsilon] \ \mathrm{on} \ y_{2}[\varepsilon])$.

The following diagram, associated to $L^{\bullet}_{1}$,
\[
\begin{cases}
 \begin{CD}
   (O_{X,x})_{(f_{1})} @>f_{1}>>  (O_{X,x})_{(f_{1})} @>>> (O_{X,x})_{(f_{1})}/(f_{1}) @>>> 0  \\
   (O_{X,x})_{(f_{1})} @>\frac{g}{f_{2}} >> (O_{X,x})_{(f_{1})},
 \end{CD}
\end{cases}
\]
gives an element $\alpha$ in $Ext_{O_{X,y_{1}}}^{1}(O_{X,y_{1}}/(f_{1}), O_{X,y_{1}})$, which further defines an element in $H_{y_{1}}^{1}(O_{X})$ and it is the image of $L^{\bullet}_{1}$
under the map in Theorem ~\ref{theorem: whykernel}: 
\begin{equation}
\mathrm{Ch}: \bigoplus\limits_{y[\varepsilon] \in X[\varepsilon]^{(1)}}K_{0}(O_{X,y}[\varepsilon] \ \mathrm{on} \ y[\varepsilon])_{\mathbb{Q}} \to \bigoplus\limits_{y \in X^{(1)}}H_{y}^{1}(O_{X}).
\end{equation}

Similarly, the following diagram, associated to $L^{\bullet}_{2}$,
\[
\begin{cases}
 \begin{CD}
   (O_{X,x})_{(f_{2})} @>f_{2}>>  (O_{X,x})_{(f_{2})} @>>> (O_{X,x})_{(f_{2})}/(f_{2}) @>>> 0  \\
   (O_{X,x})_{(f_{2})} @>\frac{g}{f_{1}} >> (O_{X,x})_{(f_{2})},
 \end{CD}
\end{cases}
\]
gives an element $\beta$ in $Ext_{O_{X,y_{2}}}^{1}(O_{X,y_{2}}/(f_{2}), O_{X,y_{2}})$, which further defines an element in $H_{y_{2}}^{1}(O_{X})$ and it is the image of $L^{\bullet}_{2}$
under the Ch map. 

One notes $\partial^{1,-1}_{1}$ maps $\alpha$  in $H^{2}_{x}(O_{X})$ to :
\[
\begin{cases}
 \begin{CD}
   O_{X,x} @>(f_{2}, -f_{1})^{T}>>  O_{X,x}^{\oplus 2} @>(f_{1}, f_{2})>> O_{X,x} @>>> O_{X,x}/(f_{1}, f_{2}) @>>> 0  \\
   O_{X,x} @>g >> O_{X,x}.
 \end{CD}
\end{cases}
\]

Similarly, $\partial^{1,-1}_{1}$ maps $\beta$   in $H^{2}_{x}(O_{X})$ to :
\[
\begin{cases}
 \begin{CD}
   O_{X,x} @>(f_{1}, -f_{2})^{T}>>  O_{X,x}^{\oplus 2} @>(f_{2}, f_{1})>> O_{X,x} @>>> O_{X,x}/(f_{1}, f_{2}) @>>> 0  \\
   O_{X,x} @>g >> O_{X,x}.
 \end{CD}
\end{cases}
\]

Noting the commutative diagram below 
\[\displaystyle
  \begin{CD}
    O_{X,x} @>(f_{2}, -f_{1})^{T}>>  O_{X,x}^{\oplus 2} @>(f_{1}, f_{2})>> O_{X,x} @>>> O_{X,x}/(f_{1}, f_{2}) @>>> 0  \\
   @V-1VV @VMVV @V1VV @V=VV\\
  O_{X,x} @>(f_{1}, -f_{2})^{T}>>  O_{X,x}^{\oplus 2} @>(f_{2}, f_{1})>> O_{X,x} @>>> O_{X,x}/(f_{2}, f_{1}) @>>> 0,
  \end{CD}
\]
where $M$ stands for the matrix 
\[
\begin{pmatrix} 0 & 1 \\ 1 & 0 \end{pmatrix},
\]
Green-Griffiths observes that  $\partial^{1,-1}_{1}(\alpha)$ and $\partial^{1,-1}_{1}(\beta)$ are negative of each other in $Ext_{O_{X,x}}^{2}(O_{X,x}/(f_{1},f_{2}), O_{X,x})$. Hence, $\partial^{1,-2}_{1}(\alpha + \beta)$ is $0$ in $H^{2}_{x}(O_{X})$. Therefore, $d^{1,-1}_{1,X[\varepsilon]}(L^{\bullet}) = 0$ because of the commutative diagram:

\[\displaystyle
  \begin{CD}
\bigoplus\limits_{y \in X^{(1)}}H_{y}^{1}(O_{X}) @<<<  \bigoplus\limits_{y[\varepsilon] \in X[\varepsilon]^{(1)}}K_{0}(O_{X,y}[\varepsilon] \ \mathrm{on} \ y[\varepsilon]) \\
     @V \partial^{1,-1}_{1}VV @Vd^{1,-1}_{1,X[\varepsilon]}VV \\
     \bigoplus\limits_{x\in X^{(2)}} H_{x}^{2}(O_{X}) @<<\cong<  \bigoplus\limits_{x[\varepsilon] \in X[\varepsilon]^{(2)}}K_{-1}(O_{X,x}[\varepsilon] \ \mathrm{on} \ x[\varepsilon]).
     \end{CD}
\]

\end{proof}


The above argument seems formal, so it's convenient to intuitively explain the meaning of taking the kernel of $d^{1,-1}_{1,X[\varepsilon]}$. This has been done by using residue by Green-Griffiths \cite{GGtangentspace}.

\textbf{Alternative explanation by using residue, due to Green-Griffiths \cite{GGtangentspace}}(page 103-104 and the summary on page 119)
To fix notations, let $Y_{1}$ and $Y_{2}$ be two curves on $X$.
It is well-known that tangent vectors to the curves $Y_{1}$ and $Y_{2}$ are given by normal vector fields, 
\[
 v_{1} \in H^{0}(N_{Y_{1}/X}),  v_{2} \in H^{0}(N_{Y_{2}/X}).
\]

For simplicity, we work locally in Zariski topology and assume $Y_{1}$ and $Y_{2}$ intersect transversely at a point $x$. Around the point $x$, we can write 
\[
  Y_{1}= \mathrm{div}(f_{1});  \ Y_{2}= \mathrm{div}(f_{2}).
\]
Then $v_{1}$ and $v_{2}$ can be expressed as
\[
 v_{1} = w_{1}\dfrac{\partial}{\partial f_{1}}, v_{2} = w_{2}\dfrac{\partial}{\partial f_{2}},
\]
for some functions $w_{1}$ and $w_{2}$. For our purpose, we take $w_{1}= \dfrac{g}{f_{2}}$ and $w_{2}= \dfrac{h}{f_{1}}$, then 
\[
 v_{1} = \dfrac{g}{f_{2}} \dfrac{\partial}{\partial f_{1}}, v_{2} = \dfrac{h}{f_{1}} \dfrac{\partial}{\partial f_{2}}.
\]

For $\omega = d f_{1} \wedge d f_{2}$, we consider the Poincar\'e residue:
\begin{equation}
\begin{cases}
 \begin{CD}
  v_{1} \rfloor \omega =  \mathrm{Res}_{Y_{1}}(\dfrac{g d f_{1} \wedge d f_{2}}{f_{1}f_{2}}) = \dfrac{g d f_{2}}{f_{2}} \in \Omega^{1}_{K(Y_{1})/ \mathbb{C}}; \\
v_{2} \rfloor \omega =  \mathrm{Res}_{Y_{2}}(\dfrac{h d f_{1} \wedge d f_{2}}{f_{1}f_{2}}) = - \dfrac{h d f_{1}}{f_{1}} \in \Omega^{1}_{K(Y_{2})/ \mathbb{C}}.
 \end{CD}
\end{cases}
\end{equation} 
We further take the residue at $x$:
\[
\mathrm{Res}_{x}(\dfrac{g d f_{2}}{f_{2}}) = g, \mathrm{Res}_{x}(- \dfrac{h d f_{1}}{f_{1}}) = -h.
\]
The sum of the residues is
\[
\mathrm{Res}_{x}(\dfrac{g d f_{2}}{f_{2}}) + \mathrm{Res}_{x}(- \dfrac{h d f_{1}}{f_{1}}) = g-h.
\]
When $g=h$, the sum of the residues  is $0$.

\textbf{Conclusion}: for $v_{1} = \dfrac{g}{f_{2}} \dfrac{\partial}{\partial f_{1}}$ and $v_{2} = \dfrac{g}{f_{1}} \dfrac{\partial}{\partial f_{2}}$, 
\[
\mathrm{Res}_{x}(v_{1} \rfloor \omega ) + \mathrm{Res}_{x}(v_{1} \rfloor \omega) =0.
\]

How does this connect to K-groups?

For normal vectors
\[
 v_{1} = \dfrac{g}{f_{2}} \dfrac{\partial}{\partial f_{1}}, v_{2} = \dfrac{g}{f_{1}} \dfrac{\partial}{\partial f_{2}}, 
\]
$v_{1}$ corresponds  to $f_{1} + \varepsilon \dfrac{g}{f_{2}}$ and $v_{2}$ corresponds  to $f_{2} + \varepsilon \dfrac{g}{f_{1}}$. In other words, $v_{1}$ corresponds  to the complex
\[
 L^{\bullet}_{1}: 0 \to (O_{X,x})_{(f_{1})}[\varepsilon] \xrightarrow{f_{1} + \varepsilon \dfrac{g}{f_{2}}}  (O_{X,x})_{(f_{1})}[\varepsilon] 
\]
and $v_{2}$ corresponds  to the complex
\[
 L^{\bullet}_{2}: 0 \to (O_{X,x})_{(f_{2})}[\varepsilon] \xrightarrow{f_{2} + \varepsilon \dfrac{g}{f_{1}}}  (O_{X,x})_{(f_{2})}[\varepsilon].
\]

\textbf{Conclusion}: $\mathrm{Res}_{x}(v_{1} \rfloor \omega ) + \mathrm{Res}_{x}(v_{2} \rfloor \omega) =0$  corresponds to $(L^{\bullet}_{1} + L^{\bullet}_{2}) \in \mathrm{Ker}(d^{1,-1}_{1,X_{1}})$ in Theorem \ref{theorem: GG-inKernel}.

\begin{remark}
One may ask why there is no necessary to take kernel in Quillen's or Soul\'e 's proofs of Bloch's formula in \cite{Quillen, Soule}.  That's because negative K-groups are zero in this case, $K_{-1}(k(x))=0$.  If we take kernel,  the cycles class group $Z^{p}(X)$ is still identified with $\bigoplus\limits_{x \in X^{(p)}}K_{0}(k(x))$.
\end{remark}

\subsection{Why use Milnor K-theory}
\label{Why use Milnor K-theory}
In the following, we explain why we use Milnor K-groups with support, i.e., certain eigenspaces of Thomason-Trobaugh K-groups,  not the entire Thomason-Trobaugh K-groups, to define cycles and Chow groups in Definition ~\ref{definition: Milnor K-theoretic Chow groups}. 

 In 2012 Fall, the author met a question on describing certain eigenspaces of K-groups and he E-mailed this question to Christophe Soul\'e for help.  Christophe Soul\'e suggested that if the author's question  is true, then it should be only true for Milnor K-theory and guided the author to read Theorem 5 in \cite{Soule}:  \\

  In our setting, $X$ is smooth projective over $k$, so the Gersten complex has the form of 
{\footnotesize
\begin{align*}
  0 \to K_{p}(k(X)) \to \dots \to \bigoplus_{x \in X^{(p-1)}}K_{1}(O_{X,x} \ \mathrm{on} \ x)
    \to \bigoplus_{x \in X^{(p)}}K_{0}(O_{X,x} \ \mathrm{on} \ x)  \to 0,
\end{align*}
}
which agrees with the Gersten complex by Quillen \cite{Quillen} because of D\'evissage:
{\footnotesize
\begin{align*}
  0 \to  K_{p}(k(X)) \to \dots \to \bigoplus_{x \in X^{(p-1)}}K_{1}(k(x)) 
    \to \bigoplus_{x \in X^{(p)}}K_{0}(k(x))  \to 0.
\end{align*}
}
 
For $x \in X^{(p)}$, Adams operations can decompose $K_{0}(O_{X,x} \ \mathrm{on} \ x)$ and $K_{0}(k(x))$ into direct sums of eigenspaces respectively. Moreover, Riemann-Roch without denominator, due to Soul\'e \cite{Soule}, says
\[
K^{(j)}_{0}(O_{X,x} \ \mathrm{on} \ x)_{\mathbb{Q}} = K^{(j-p)}_{0}(k(x))_{\mathbb{Q}}.
\]
  
For $j=p$,  
\[
K^{(p)}_{0}(O_{X,x} \ \mathrm{on} \ x)_{\mathbb{Q}} = K^{(0)}_{0}(k(x))_{\mathbb{Q}} = K_{0}(k(x))_{\mathbb{Q}},
\]
This forces to 
\[
K^{(j)}_{0}(O_{X,x} \ \mathrm{on} \ x)_{\mathbb{Q}} = 0, \ \mathrm{for} \  j \neq p.
\]
So only $K^{(p)}_{0}(O_{X,x} \ \mathrm{on} \ x)_{\mathbb{Q}}$ is needed to study $Z^{p}(X)_{\mathbb{Q}}$.

To give an example, for $X$ a smooth projective three-fold over a field $k$ of characteristic $0$,  a point $x \in X^{(3)}$ is defined by $(f, g,h)$ and a first order deformation of $x$ is given by $(f+ \varepsilon f_{1}, g+ \varepsilon g_{1},h+ \varepsilon h_{1})$. According to Theorem \ref{theorem: GilletSoule}, the Koszul complex $F_{\bullet}(f+ \varepsilon f_{1}, g+ \varepsilon g_{1},h+ \varepsilon h_{1})$ associated to $(f+ \varepsilon f_{1}, g+ \varepsilon g_{1},h+ \varepsilon h_{1})$ is of weight 3:
\[
F_{\bullet}(f+ \varepsilon f_{1}, g+ \varepsilon g_{1},h+ \varepsilon h_{1}) \in K^{(3)}_{0}(O_{X,x}[\varepsilon] \ \mathrm{on} \ x)_{\mathbb{Q}},
\]
and $F_{\bullet}(f+ \varepsilon f_{1}, g+ \varepsilon g_{1},h+ \varepsilon h_{1}) \notin K^{(2)}_{0}(O_{X,x}[\varepsilon] \ \mathrm{on} \ x)_{\mathbb{Q}}$.

So we ignore  $K^{(2)}_{0}(O_{X,x}[\varepsilon] \ \mathrm{on} \ x)_{\mathbb{Q}}(\cong H^{3}_{x}(O_{X}) \neq 0)$, and use only $K^{(3)}_{0}(O_{X,x}[\varepsilon] \ \mathrm{on} \ x)_{\mathbb{Q}}$
to define Milnor K-theoretic 3-cycles $Z^{M}_{3}(D^{\mathrm{Perf}}(X[\varepsilon]))$, which is the first order deformation of  $Z^{3}(X)_{\mathbb{Q}}$.

\section{\textbf{Higher order deformation-obstructions}}
\label{Obstruction issues}

Let $X$ be a smooth projective variety over a field $k$ of characteristic $0$. For each positive integer $j$, $X_{j}=X \times_{k}  \mathrm{Spec}(k[t]/ t^{j+1})$ is the $j$-th order infinitesimally trivial deformation of $X$.
For any integer $m$,  let $K^{M}_{m}(O_{X_{j},x_{j}} \ \mathrm{on} \ x_{j},t)$
denote the relative K-group, that is, the kernel of the natural projection
\[
 K^{M}_{m}(O_{X_{j},x_{j}} \ \mathrm{on} \ x_{j}) \xrightarrow{t =0} K^{M}_{m}(O_{X,x} \ \mathrm{on} \ x).
\]
Recall that we have proved the following isomorphisms in \cite{Y-2}:
\begin{theorem} [Corollary 3.11 in \cite{Y-2}] \label{theorem: CompRelatK}
Let $X$ be a smooth projective variety over a field $k$ of characteristic $0$ and 
let $x \in X^{(i)}$. Chern character induces the following isomorphisms between relative K-groups and local cohomology groups:
\[
 \begin{CD}
 K^{M}_{m}(O_{X_{j},x_{j}} \ \mathrm{on} \ x_{j},t) \cong H_{x}^{i}((\Omega^{m+i-1}_{X/\mathbb{Q}})^{\oplus j}).
 \end{CD}
 \]
\end{theorem}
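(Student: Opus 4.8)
The plan is to compute the relative group by transporting it, via the Chern character, into relative cyclic homology, where the nilpotent ideal $(t)$ makes everything accessible, and then to read off both the differential forms and their multiplicity. First I would reduce to a purely local statement: since $K^{M}_{m}(O_{X_{j},x_{j}} \ \mathrm{on} \ x_{j},t)$ is, by Definition \ref{definition:Milnor K-theory with support}, the Adams eigenspace of weight $m+i$ inside the relative Thomason--Trobaugh K-group with support on the closed point of the local ring $A_{j} := O_{X,x}[t]/t^{j+1}$, it suffices to analyze this local ring together with the support condition. The relative group is precisely the part of the K-theory carried by the nilpotent ideal $(t) \subset A_{j}$, i.e. the kernel of $t=0$.

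The key input would be Goodwillie's theorem: because $(t)$ is nilpotent and $k \supset \mathbb{Q}$, the relative Chern character from K-theory to negative cyclic homology is a rational isomorphism, and it is compatible with the Adams operations on the K-side and the $\lambda$-decomposition on the cyclic side. Thus the weight-$(m+i)$ eigenspace of the relative K-group in degree $m$ is identified with the corresponding weight piece of the relative cyclic homology of $A_{j}$ relative to $t=0$, shifted by one homological degree. This is exactly the mechanism already visible in the left-hand columns of the diagram of Theorem \ref{theorem: TanMInorFinal}, where the Chern character lands in local cohomology of absolute differentials; the present statement is its relative, higher-order refinement.

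Next I would apply the Hochschild--Kostant--Rosenberg theorem and its cyclic refinement to identify the relevant weight piece with relative absolute K\"ahler differentials, and then compute these differentials explicitly. Over $\mathbb{Q}$ one has $d(t^{j+1}) = (j+1)\,t^{j}\,dt$ with $(j+1)$ invertible, so $t^{j}dt=0$ and the contributions of $t, t^{2}, \dots, t^{j}$ (equivalently $dt, t\,dt, \dots, t^{j-1}dt$) span a rank-$j$ relative structure over $\Omega^{\bullet}_{X/\mathbb{Q}}$; meanwhile the relative de Rham cohomology of the truncated base vanishes, so no further terms survive. This produces precisely $j$ copies of $\Omega^{m+i-1}_{X/\mathbb{Q}}$, matching the exponent $m+i-1$ obtained in the $j=1$ case of Theorem \ref{theorem: TanMInorFinal}.

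Finally I would convert the support condition into local cohomology. For a codimension-$i$ point, Soul\'e's Riemann--Roch without denominators \cite{Soule} shifts the Adams weight by $i$, so the weight-$(m+i)$ piece supported on $x$ corresponds, through the coniveau/Cousin resolution, to the local cohomology functor $H^{i}_{x}$ applied to the relevant sheaf; assembling this with the previous step yields $H^{i}_{x}\big((\Omega^{m+i-1}_{X/\mathbb{Q}})^{\oplus j}\big)$. The main obstacle I anticipate is making Goodwillie's isomorphism compatible \emph{simultaneously} with the support filtration and with the Adams eigenspace decomposition, so that the weight-$(m+i)$ part supported in codimension $i$ lands exactly in $H^{i}_{x}(\Omega^{m+i-1}_{X/\mathbb{Q}})$ and in no neighbouring Hodge piece; the accompanying delicate point is pinning down the multiplicity $j$ from the relative cyclic homology of the truncated base, which is where the characteristic-zero hypothesis and the vanishing of relative de Rham cohomology do the essential work.
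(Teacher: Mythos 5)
Your proposal follows essentially the same route as the paper's source for this statement: the paper does not reprove it here but cites Corollary 3.11 of \cite{Y-2}, whose mechanism is exactly what you assemble — the relative Chern character to negative cyclic homology (Goodwillie, rationally an isomorphism for the nilpotent ideal $(t)$, compatible with the Adams/$\lambda$-weights), the Hochschild/cyclic computation for truncated polynomials producing the $j$ copies of $\Omega^{m+i-1}_{X/\mathbb{Q}}$, and the coniveau/local-cohomology interpretation of the support condition at a codimension-$i$ point. The surrounding text confirms this: the Chern character columns in Theorem \ref{theorem: TanMInorFinal} and the remark immediately after Theorem \ref{theorem: CompRelatK} on the cyclic homology of truncated polynomials (giving the decomposition (3.2)) are precisely the ingredients you identify, including the delicate compatibility of weights and supports that you flag as the main obstacle.
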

So we have the following split exact sequence
\begin{equation}
 0 \to H_{x}^{i}((\Omega^{m+i-1}_{X/\mathbb{Q}})^{\oplus j}) \to K^{M}_{m}(O_{X_{j},x_{j}} \ \mathrm{on} \ x_{j}) \xrightarrow{t =0} K^{M}_{m}(O_{X,x} \ \mathrm{on} \ x) \to 0.
\end{equation}
Moreover, it is known that from the computation of Hochschild(cyclic) homology of truncated polynomials, $H_{x}^{i}((\Omega^{m+i-1}_{X/\mathbb{Q}})^{\oplus j})$ carries additional structure:
\begin{equation}
 H_{x}^{i}((\Omega^{m+i-1}_{X/\mathbb{Q}})^{\oplus j}) \cong t H_{x}^{i}(\Omega^{m+i-1}_{X/\mathbb{Q}}) \oplus \cdots \oplus t^{j}H_{x}^{i}(\Omega^{m+i-1}_{X/\mathbb{Q}}).
 \end{equation}
 
To simplify the notations, we use $A$ to denote $K^{M}_{m}(O_{X,x} \ \mathrm{on} \ x)$ and $B$ to denote $H_{x}^{i}(\Omega^{m+i-1}_{X/\mathbb{Q}})$, then we have
\begin{equation}
 K^{M}_{m}(O_{X_{j},x_{j}} \ \mathrm{on} \ x_{j}) \cong A \oplus tB \oplus \cdots t^{j}B. 
\end{equation}

The natural map
\[
f_{j}: X_{j} \to X_{j+1},
\]
induces $f^{\ast}_{j}:  K^{M}_{m}(O_{X_{j+1},x_{j+1}} \ \mathrm{on} \ x_{j+1})  \to K^{M}_{m}(O_{X_{j},x_{j}} \ \mathrm{on} \ x_{j})$. Moreover, 
there exists the following commutative diagram
\[
  \begin{CD}
      K^{M}_{m}(O_{X_{j+1},x_{j+1}} \ \mathrm{on} \ x_{j+1})    @>f_{j}^{*}>> K^{M}_{m}(O_{X_{j},x_{j}} \ \mathrm{on} \ x_{j})  \\
            @V \cong VV  @V \cong VV \\
         A \oplus tB \oplus \cdots t^{j}B \oplus t^{j+1}B @>t^{j+1}=0>> A \oplus tB \oplus \cdots t^{j}B,   
\end{CD}
\]  
and exists the following short exact sequence of abelian groups:
\[
0 \to B \to A \oplus tB \oplus \cdots t^{j}B \oplus t^{j+1}B \xrightarrow{t^{j+1}=0}
A \oplus tB \oplus \cdots t^{j}B \to 0.
\]

This shows that 
\begin{lemma} \label{lemma: keylemma}
For $X$ a smooth projective variety over a field $k$ of characteristic $0$, for each positive integer $j$ and $ x \in X^{(i)}$, there exists the following short exact sequence of abelian groups, where $m$ is any integer,
\[
0 \to H_{x}^{i}(\Omega^{m+i-1}_{X/\mathbb{Q}}) \to   K^{M}_{m}(O_{X_{j+1},x_{j+1}} \ \mathrm{on} \ x_{j+1})    \xrightarrow{f_{j}^{*}}
K^{M}_{m}(O_{X_{j},x_{j}} \ \mathrm{on} \ x_{j})   \to 0. 
\]
\end{lemma}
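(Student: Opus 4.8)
The plan is to read off the exact sequence directly from the explicit description of the relative K-groups supplied by Theorem~\ref{theorem: CompRelatK}, together with the graded splitting coming from the cyclic homology of truncated polynomial rings. First I would apply Theorem~\ref{theorem: CompRelatK} to both $X_{j}$ and $X_{j+1}$ and combine it with the decomposition displayed above, writing $A := K^{M}_{m}(O_{X,x} \ \mathrm{on} \ x)$ and $B := H_{x}^{i}(\Omega^{m+i-1}_{X/\mathbb{Q}})$, so that
\[
K^{M}_{m}(O_{X_{j},x_{j}} \ \mathrm{on} \ x_{j}) \cong A \oplus tB \oplus \cdots \oplus t^{j}B, \qquad K^{M}_{m}(O_{X_{j+1},x_{j+1}} \ \mathrm{on} \ x_{j+1}) \cong A \oplus tB \oplus \cdots \oplus t^{j+1}B.
\]
This reduces the whole statement to an assertion about these two graded modules and the map between them.

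Next I would identify the pullback $f_{j}^{*}$ with the truncation $t^{j+1}=0$ under these isomorphisms. The morphism $f_{j}\colon X_{j}\to X_{j+1}$ is induced by the surjection $k[t]/t^{j+2}\twoheadrightarrow k[t]/t^{j+1}$ of coefficient rings, so it suffices to check that both the Chern character isomorphism of Theorem~\ref{theorem: CompRelatK} and the grading of the relative part by powers of $t$ are natural in $j$. Granting this naturality, $f_{j}^{*}$ must send $t^{j+1}B$ to $0$ and act as the identity on each lower-degree summand, which is precisely the commutative square recorded just before the statement.

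With the identification in hand, the conclusion is immediate: the kernel of the surjection $t^{j+1}=0$ on $A \oplus tB \oplus \cdots \oplus t^{j+1}B$ is the top summand $t^{j+1}B$, which is isomorphic to $B = H_{x}^{i}(\Omega^{m+i-1}_{X/\mathbb{Q}})$, while surjectivity is evident since every element of the target lifts by padding with a zero top-degree term. Assembling these gives the asserted sequence
\[
0 \to H_{x}^{i}(\Omega^{m+i-1}_{X/\mathbb{Q}}) \to K^{M}_{m}(O_{X_{j+1},x_{j+1}} \ \mathrm{on} \ x_{j+1}) \xrightarrow{f_{j}^{*}} K^{M}_{m}(O_{X_{j},x_{j}} \ \mathrm{on} \ x_{j}) \to 0.
\]

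The one genuinely substantive point — and the step I expect to be the main obstacle — is the compatibility claimed in the second paragraph: that under the a priori non-canonical splitting isomorphisms the induced map $f_{j}^{*}$ really is the naive truncation rather than some more complicated graded endomorphism. This rests on the functoriality in $j$ of the Hochschild/cyclic homology computation for $k[t]/t^{j+1}$ and of the Chern character identification; once that naturality is established, the remainder is a formal manipulation of finite direct sums.
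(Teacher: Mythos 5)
Your proposal follows essentially the same route as the paper: invoke Theorem~\ref{theorem: CompRelatK} together with the $t$-graded decomposition $K^{M}_{m}(O_{X_{j},x_{j}} \ \mathrm{on} \ x_{j}) \cong A \oplus tB \oplus \cdots \oplus t^{j}B$ coming from the Hochschild/cyclic homology of truncated polynomials, identify $f_{j}^{*}$ with the truncation $t^{j+1}=0$, and read off the kernel as the top summand $t^{j+1}B \cong B$. The naturality point you flag as the main obstacle is precisely what the paper encodes (without further justification) in its asserted commutative square relating $f_{j}^{*}$ to the truncation map, so your argument matches the paper's, with your version being slightly more candid about where the real content lies.
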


\subsection{Obstructions and negative K-groups}
\label{Obstructions and negative K-groups}
The natural map $f_{j}: X_{j} \to X_{j+1}$ induces the following commutative diagram:
\[
\begin{CD}
\bigoplus\limits_{x_{j+1} \in X_{j+1} ^{(p)}}K^{M}_{0}(O_{X_{j+1},x_{j+1}} \ \mathrm{on} \ x_{j+1})    @>f_{j}^{*}>>
\bigoplus\limits_{x_{j} \in X_{j} ^{(p)}}K^{M}_{0}(O_{X_{j},x_{j}} \ \mathrm{on} \ x_{j})  \\ 
@Vd_{1,X_{j+1}}^{p,-p}VV  @Vd_{1,X_{j}}^{p,-p}VV \\ 
 \bigoplus\limits_{x_{j+1} \in X_{j+1} ^{(p+1)}} K^{M}_{-1}(O_{X_{j+1},x_{j+1}} \ \mathrm{on} \ x_{j+1})   @>f_{j}^{*}>>
\bigoplus\limits_{x_{j} \in X_{j} ^{(p+1)}}K^{M}_{-1}(O_{X_{j},x_{j}} \ \mathrm{on} \ x_{j}),
\end{CD}
\]
so it further induces $f^{\ast}_{j}:  Z^{M}_{p}(D^{\mathrm{perf}}(X_{j+1})) \to  Z^{M}_{p}(D^{\mathrm{perf}}(X_{j}))$, recall that $Z^{M}_{p}(D^{\mathrm{perf}}(X_{j}))$ is defined as the kernel of $\mathrm{Ker}(d_{1,X_{j}}^{p,-p})$ in Definition ~\ref{definition: Milnor K-theoretic Chow groups}.

\begin{definition} \label{definition: deformation}
Given $\xi_{j} \in Z^{M}_{p}(D^{\mathrm{perf}}(X_{j}))$,  an element $\xi_{j+1} \in Z^{M}_{p}(D^{\mathrm{perf}}(X_{j+1}))$ is called a deformation of $\xi_{j}$,  if $f^{\ast}_{j}(\xi_{j+1}) = \xi_{j}$.
\end{definition}

$\xi_{j}$ and $\xi_{j+1}$ can be formally written as finite sums
\[
 \sum_{x_{j}}\lambda_{j}\cdot \overline{\{x_{j}\}}_{\mathrm{red}} \ \mathrm{and} \   \sum_{x_{j+1}}\lambda_{j+1}\cdot \overline{\{x_{j+1}\}}_{\mathrm{red}},
\] 
where $\sum\limits_{x_{j}}\lambda_{j} \in \mathrm{Ker}(d_{1,X_{j}}^{p,-p}) \subset \bigoplus\limits_{x_{j} \in X_{j} ^{(p)}}K^{M}_{0}(O_{X_{j},x_{j}} \ \mathrm{on} \ x_{j})$ and $\overline{\{x_{j}\}}_{\mathrm{red}}$ is the closed reduced scheme associated to $\overline{\{x_{j}\}}$. 

Since $\overline{\{x_{j}\}}_{\mathrm{red}} = \overline{\{x_{j+1}\}}_{\mathrm{red}}$, when we deform from  $\xi_{j}$ to $\xi_{j+1}$, we  deform the \textbf{coefficients}, i.e, we deform from  $\sum\limits_{x_{j}}\lambda_{j}$ to $\sum\limits_{x_{j+1}}\lambda_{j+1}$.

Since 
\[
f^{\ast}_{j}: \bigoplus\limits_{x_{j+1} \in X_{j+1} ^{(p)}}K^{M}_{0}(O_{X_{j+1},x_{j+1}} \ \mathrm{on} \ x_{j+1}) \to \bigoplus\limits_{x_{j} \in X_{j} ^{(p)}}K^{M}_{0}(O_{X_{j},x_{j}} \ \mathrm{on} \ x_{j})
\]
is surjective, see lemma ~\ref{lemma: keylemma}, given any $\xi_{j} \in Z^{M}_{p}(D^{\mathrm{perf}}(X_{j}))$, there exists 
\[
\xi_{j+1} \in \bigoplus\limits_{x_{j+1} \in X_{j+1} ^{(p)}}K^{M}_{0}(O_{X_{j+1},x_{j+1}} \ \mathrm{on} \ x_{j+1})
\]
 such that $f^{\ast}_{j}(\xi_{j+1}) = \xi_{j}$. We would like to know whether $\xi_{j+1} \in Z^{M}_{p}(D^{\mathrm{perf}}(X_{j+1}))$.

An easy diagram chasing shows $f^{\ast}_{j}d_{1,X_{j+1}}^{p,-p}(\xi_{j+1})=0$, so $d_{1,X_{j+1}}^{p,-p}(\xi_{j+1}) \in \mathrm{Ker}(f^{\ast}_{j})= \bigoplus\limits_{x \in X^{(p+1)}}H_{x}^{p+1}(\Omega_{X/\mathbb{Q}}^{p-1})$, see lemma ~\ref{lemma: keylemma}(take $m=-1$ and $i=p+1$).  If $d_{1,X_{j+1}}^{p,-p}(\xi_{j+1})  =0$, then we can lift $\xi_{j}$ to $\xi_{j+1}$.

\begin{definition}  \label{definition: obstructionspace}
The obstruction space for lifting elements in  $Z^{M}_{p}(D^{\mathrm{perf}}(X_{j}))$ to $Z^{M}_{p}(D^{\mathrm{perf}}(X_{j+1}))$ is defined to be $\bigoplus\limits_{x \in X^{(p+1)}}H_{x}^{p+1}(\Omega_{X/\mathbb{Q}}^{p-1}) $.
\end{definition}

\subsection{Obstruction issues-versus Hilbert scheme}
\label{Obstruction issues}
For each positive integer $j$, let $X_{j}$ denote the $j$-th trivial deformation of $X$.
Let $Y \subset X$ be a subvariety of codimension $p$. Obstruction issues asks whether it is possible to lift $Y$ to $Y_{j}$ successively, where $Y_{j} \subset X_{j} $ with suitable assumptions.

It is a common phenomenon that obstructions can occur in deformation, though the deformation of $X$ is trivial. It is well known that, considering $Y$ as an element of $\mathrm{Hilb}(X)$,  the tangent space $T_{Y}\mathrm{Hilb}(X)$ may be obstructed.

However, Green-Griffiths predicts that, considering $Y$ as an element of $Z^{p}(X)$, we can eliminate obstructions in their program \cite{GGtangentspace}:

\textbf{Obstruction issues}(page 187-190 in \cite{GGtangentspace}):
There are essentially four (not mutually exclusive) possibilities:

\begin{itemize}
\item (i) $TZ^{p}(X)$ may be obstructed. That is, there exists some $\tau \in TZ^{p}(X)$
such that, thinking of $\tau$ as a map 
\[
\mathrm{Spec}(k[\varepsilon]/(\varepsilon^2)) \to Z^{p}(X),
\]
this map cannot be lifted to a map 
\[
\mathrm{Spec}(k[\varepsilon]/(\varepsilon^{k+1})) \to Z^{p}(X)
\]
for some $k \geq 2$. \\
 
\item (ii) $TZ^{p}(X)$  is formally unobstructed. That is, for any $\tau \in TZ^{p}(X)$, $\tau$ may be lifted 
to a map 
\[
\mathrm{lim}(\mathrm{Spec}(k[\varepsilon]/(\varepsilon^{k+1}))) \to Z^{p}(X).
\] 

\item(iiii) $TZ^{p}(X)$  is formally unobstructed, but there exists $\tau \in TZ^{p}(X)$ which is not the tangent to a  geometric arc  in $Z^{p}(X)$. \\

\item (iv)  Every $\tau \in TZ^{p}(X)$ is the tangent to a geometric arc  in $Z^{p}(X)$. \\
 
\end{itemize}

For $p=1$,  this question was solved by TingFai Ng in his Ph.D thesis,
\begin{theorem} [Theorem 1.3.3 in \cite{Ng}] \label{theorem: Ng's theorem} Every $\tau \in TZ^{1}(X)$ is the tangent to a geometric arc in $Z^{1}(X)$.
\end{theorem}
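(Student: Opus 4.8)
The plan is to realize each tangent vector by a genuine flat family of divisors, exploiting the fact that on the smooth variety $X$ every prime divisor is locally principal. By Theorem~\ref{theorem: compute tangent space} (and, at the sheaf level, Corollary~\ref{corollary: agreeGGSheaf}) we may identify
\[
TZ^{1}(X) \cong \mathrm{Ker}(\partial_{1}^{1,-1}) \cong H^{0}(X, PP_{X}),
\]
where $PP_{X}$ is the sheaf of principal parts of Definition~\ref{definition: GG's PP part}. The geometric input is the pair of short exact sequences of sheaves recalled in Section~\ref{Why take kernel}: the multiplicative sequence $0 \to O_{X}^{\ast} \to K(X)^{\ast} \xrightarrow{\mathrm{div}} \underline{Z}^{1}(X) \to 0$ and its tangent (additive) sequence $0 \to O_{X} \to K(X) \to PP_{X} \to 0$. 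Here the map $K(X) \to PP_{X}$ is precisely the derivative of $\mathrm{div}$, so a class $u \in K(X)$ mapping to $\tau$ records the first-order motion of the principal divisor $\mathrm{div}(1+tu)$.

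First I would reduce the problem by d\'evissage. Since the constant sheaf $K(X)$ is flasque, the tangent sequence yields
\[
H^{0}(K(X)) \to H^{0}(PP_{X}) \to H^{1}(O_{X}) \to 0,
\]
so every $\tau \in TZ^{1}(X) = H^{0}(PP_{X})$ splits the construction into two pieces: a part lying in the image of a global rational function, and a part mapping onto the group $H^{1}(X,O_{X})$, which is exactly the formal tangent space $T_{0}\mathrm{Pic}^{0}(X)$ to the Picard variety. It suffices to produce a geometric arc realizing each piece and then add them.

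For the rational-function part, suppose $\tau$ is the image of $u \in K(X) = H^{0}(K(X))$. Then the explicit arc
\[
t \mapsto \mathrm{div}(1+tu) \in Z^{1}(X)
\]
over $\mathrm{Spec}(k[[t]])$ has $\mathrm{div}(1)=0$ at $t=0$ and, by the definition of the derivative of $\mathrm{div}$, tangent equal to the image of $u$ in $PP_{X}$, namely $\tau$; this is a genuine geometric family of (rationally trivial) divisors. For the Picard part, let $\eta \in H^{1}(X,O_{X})$ be the image of $\tau$. Because $\mathrm{Pic}^{0}(X)$ is an abelian variety, hence smooth, $\eta$ is tangent to an honest algebraic arc $t \mapsto L_{t}$ in $\mathrm{Pic}^{0}(X)$. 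Choosing an auxiliary very ample $O_{X}(1)$, for $n \gg 0$ the twists $L_{t}(n)$ acquire nonvanishing, base-change-compatible sections along the arc, and the resulting relative effective Cartier divisor $\mathcal{D} \subset X \times \mathrm{Spec}(k[[t]])$ is flat over the base; matching its tangent with $\eta$ and correcting by a fixed divisor to kill the $O_{X}(n)$ twist produces the required geometric arc.

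The main obstacle is this last step: one must guarantee that the family of line bundles can be promoted to a flat geometric family of divisors realizing \emph{exactly} the prescribed first-order datum, with the section and twist choices not perturbing the tangent. This rests on the smoothness and representability of the relative divisor (Hilbert) scheme over $\mathrm{Pic}(X)$ together with semicontinuity and cohomology-and-base-change to propagate sections along the arc. The smoothness of $X$ and the codimension-one hypothesis are what make everything unobstructed: every prime divisor is locally principal, so the local deformations $f \mapsto f + tg$ are automatically flat and glue across codimension-two overlaps, precisely the feature that fails for general $p$ and that forces the negative K-group obstructions of Section~\ref{Obstructions and negative K-groups}.
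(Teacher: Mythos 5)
The paper does not contain a proof of this statement at all: it is quoted from Ng's thesis (Theorem 1.3.3 in \cite{Ng}) and used as a black box, so there is no in-paper argument to compare yours against. Judged on its own merits, your strategy is the natural one for the divisor case and I believe it is essentially sound: the d\'evissage coming from $0 \to O_{X} \to K(X) \to PP_{X} \to 0$ reduces integrability to (a) principal arcs $t \mapsto \mathrm{div}(1+tu)$, which realize exactly the tangents in the image of $K(X)$, and (b) the integrability of classes in $H^{1}(X,O_{X}) = T_{0}\mathrm{Pic}^{0}(X)$, which holds because $\mathrm{Pic}^{0}(X)$ is smooth. Note that this second step is precisely where characteristic $0$ enters (reducedness of the Picard scheme; in characteristic $p$ this fails, e.g.\ for Igusa's surfaces, and the theorem would be false as stated), so your implicit use of smoothness is not a removable convenience but the crux. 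The promotion of an arc of line bundles $L_{t}$ to an arc of cycles via Serre vanishing, cohomology-and-base-change, and subtraction of the fixed divisor $D_{0}$ is also correct: locally trivializing the family, the tangent of $\{\mathrm{div}(s_{t})-D_{0}\}$ is the collection of classes $\dot f^{U}_{0}/f^{U}_{0}$ in $PP_{X}$, and a \v{C}ech computation with the transition functions shows its image in $H^{1}(O_{X})$ is exactly the Kodaira--Spencer class $\eta$ of $\{L_{t}\}$.

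One adjustment to the logic is needed, and it actually removes the difficulty you flag at the end. The splitting of $\tau$ into a ``rational-function piece'' and a ``Picard piece'' is not canonical, and you cannot prescribe in advance which lift $\tau' \in H^{0}(PP_{X})$ of $\eta$ your constructed divisor family will realize. Run the argument in the other order: first build the Picard-part arc and let $\tau'$ be whatever tangent it has (its image in $H^{1}(O_{X})$ is $\eta$ by the computation above); then $\tau - \tau'$ lies in $\mathrm{Ker}(H^{0}(PP_{X}) \to H^{1}(O_{X}))$, hence by exactness equals the class of some global $u \in K(X)$; finally add the principal arc $\mathrm{div}(1+tu)$, using that $Z^{1}(X)$ is a group so sums of geometric arcs are geometric arcs. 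With this ordering there is no need for smoothness or representability of relative divisor schemes to ``match the prescribed first-order datum exactly'': the rational-function correction absorbs any mismatch in the lift, which is exactly the flexibility the exact sequence provides.
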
 

For $p \geq 2$, Green-Griffiths observes that 
\begin{proposition}[(10.11) on page 189 in \cite{GGtangentspace}]
For $p \geq 2$,  there exits $X$ and $\tau \in TZ^{p}(X)$ which is not the tangent to a geometric arc in $Z^{p}(X)$.
\end{proposition}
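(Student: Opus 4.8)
The plan is to produce, for each $p\ge 2$, an explicit pair $(X,\tau)$ in which the failure of $\tau$ to be geometric is detected by the \emph{arithmetic part} of the absolute differentials, namely by the image of $\Omega^1_{k/\mathbb{Q}}$ inside $\Omega^{p-1}_{X/\mathbb{Q}}$. The guiding principle is that a geometric arc of cycles deforms the \emph{equations} of its central fibre, so its first-order term is assembled out of differentials $d_{\mathbb{Q}}f$ of honest functions $f$ in the local ring. If the central cycle is chosen over $\overline{\mathbb{Q}}$, such differentials carry no $\Omega^1_{k/\mathbb{Q}}$-component, whereas a suitably chosen $\tau$ will live entirely in that component, and the two can be separated by the canonical splitting of the absolute differentials over a $\overline{\mathbb{Q}}$-structure.

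Concretely, I would take $k=\mathbb{C}$ (any $k$ with $\mathrm{trdeg}_{\mathbb{Q}}k\ge p-1$ works) and $X$ a smooth projective variety of dimension exactly $p$ defined over $\overline{\mathbb{Q}}$, e.g.\ $X=\mathbb{P}^p$. Choosing $\dim X=p$ is convenient because then $X^{(p+1)}=\varnothing$, so $\partial_1^{p,-p}$ has target $0$ and, by Theorem~\ref{theorem: compute tangent space}, $TZ^p(X)=\bigoplus_{x}H^p_x(\Omega^{p-1}_{X/\mathbb{Q}})$ with no kernel condition left to verify. Fix a closed point $x$ defined over $\overline{\mathbb{Q}}$ with regular parameters $f_1,\dots,f_p\in O_{X,x}$ taken over $\overline{\mathbb{Q}}$, and pick $\alpha_1,\dots,\alpha_{p-1}\in k$ algebraically independent over $\mathbb{Q}$. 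I define $\tau$ to be the local cohomology class at $x$ represented, in the Ang\'enoil and Lejeune-Jalabert style used in the proof of Theorem~\ref{theorem: Inkernel-LCI}, by the Koszul datum on $F_\bullet(f_1,\dots,f_p)$ whose top map is the \emph{purely arithmetic} form $d\alpha_1\wedge\cdots\wedge d\alpha_{p-1}$. That $\tau\ne 0$ follows from the splitting $\Omega^1_{X/\mathbb{Q}}\cong\Omega^1_{X/\overline{\mathbb{Q}}}\oplus(O_X\otimes_k\Omega^1_{k/\mathbb{Q}})$ and its induced grading on $\bigwedge^{p-1}$: the summand $H^p_x(O_X)\otimes_k\bigwedge^{p-1}_k\Omega^1_{k/\mathbb{Q}}$ is nonzero and contains $\tau$.

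The heart of the argument is to show $\tau$ is not tangent to a geometric arc. First I would record that any geometric arc of codimension-$p$ cycles (here $0$-cycles) with centre $[x]$ is a flat family whose first-order term is supported on the central support $\{x\}$; localising at $x$ and using the tangent-to-Koszul computation (2.1), its tangent equals a sum $\sum_i g_i\,df_1\wedge\cdots\wedge\widehat{df_i}\wedge\cdots\wedge df_p$ with $g_i\in O_{X,x}$ and the $df_j$ the differentials of the \emph{central} defining functions $f_j$. Since the $f_j$ are chosen over $\overline{\mathbb{Q}}$ and $\Omega^1_{\overline{\mathbb{Q}}/\mathbb{Q}}=0$, each $d_{\mathbb{Q}}f_j$ lies in $\Omega^1_{X/\overline{\mathbb{Q}}}$, so every $df_1\wedge\cdots\wedge\widehat{df_i}\wedge\cdots\wedge df_p$ lies in $\bigwedge^{p-1}\Omega^1_{X/\overline{\mathbb{Q}}}$. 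As this pure-geometric summand is an $O_{X,x}$-submodule, the entire tangent lies in it, complementary to the pure-arithmetic summand containing $\tau$. Hence $\tau$ cannot be such a tangent, which is exactly the gap anticipated in Remark~\ref{remark: notallKoszul}: the geometric Koszul tangents do not span the full group.

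The main obstacle is to make the characterisation of geometric tangents airtight for \emph{all} geometric arcs, not merely those deforming the fixed point with fixed support: one must allow the support, its components and multiplicities to vary in the family and still force the first-order term at $x$ into $\bigwedge^{p-1}\Omega^1_{X/\overline{\mathbb{Q}}}$. I would handle this by passing to the completed local ring at $x$, reducing an arbitrary flat family to its central defining ideal (generated over $\overline{\mathbb{Q}}$ because the centre is), and observing that the derivative wedges always involve only the central, hence $\overline{\mathbb{Q}}$-rational, differentials, regardless of how the coefficients $g_i$ degenerate. It is precisely here that $p\ge 2$ enters: for $p=1$ a single defining function deforms with an unconstrained first-order term, the analogous submodule fills the whole tangent space, no purely arithmetic obstruction survives, and every divisor tangent is indeed geometric, in agreement with Ng's Theorem~\ref{theorem: Ng's theorem}.
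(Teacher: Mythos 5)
You should first be aware that the paper does not prove this Proposition at all: it is quoted from Green--Griffiths ((10.11), page 189 of \cite{GGtangentspace}) as an input, so your attempt can only be judged against the statement itself. Judged that way, it has a genuine gap, and it sits exactly at the step you yourself flag as ``the main obstacle.'' The false claim is that every geometric arc whose tangent is supported at the $\overline{\mathbb{Q}}$-point $x$ has first-order term lying in the pure-geometric summand $\bigwedge^{p-1}$ of the $\overline{\mathbb{Q}}$-structure. Your justification --- that the central defining ideal is ``generated over $\overline{\mathbb{Q}}$ because the centre is'' --- conflates the cycle-theoretic centre with the scheme-theoretic central fibre of the flat family. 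An arc of cycles through $m[x]$ with $m\geq 2$ comes from a flat family whose central fibre is a non-reduced subscheme supported at $x$, and that subscheme need not be defined over $\overline{\mathbb{Q}}$ even though the cycle $m[x]$ is. Concretely, for $p=2$: take local coordinates $u,v$ at $x$ over $\overline{\mathbb{Q}}$ and $\alpha\in\mathbb{C}$ transcendental, and consider the family $V(u^{2}-t,\,v-\alpha u)$. It is flat, its fibres for $t\neq 0$ are pairs of reduced points, and its central cycle is $2[x]$, so it is an honest geometric arc. Its first-order truncation is the Koszul complex $F_{\bullet}(u^{2}-\varepsilon,\,v-\alpha u)$, and the paper's tangent formula (2.1), with $d=d_{\mathbb{Q}}$, gives the class $-\left[\frac{dv-\alpha\,du-u\,d\alpha}{(u^{2},\,v-\alpha u)}\right]$, whose arithmetic component is $\left[\frac{u\,d\alpha}{(u^{2},\,v-\alpha u)}\right]=\left[\frac{d\alpha}{(u,\,v)}\right]\neq 0$ --- which is precisely your $\tau$ for $p=2$. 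So tangents to geometric arcs do \emph{not} lie in the geometric summand, and the separation-of-summands argument collapses. Passing to the completed local ring does not repair this: the problem is not that the coefficients $g_{i}$ degenerate, but that the central ideal of the flat family itself involves transcendentals.

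What actually has to be proved is the hard content of Green--Griffiths' example: since arcs can be added and negated, geometric tangents form a subgroup, and one must show that the arithmetic component $\left[\frac{d\alpha}{(u,v)}\right]$ of a collision tangent can never be decoupled from its accompanying geometric component $-\left[\frac{dv-\alpha\,du}{(u^{2},\,v-\alpha u)}\right]$ by adding tangents of further arcs, through arbitrary central cycles, with arbitrary collisions and cancellations between positive and negative components. Nothing in your proposal addresses this coupling; your argument only treats multiplicity-one branches with reduced, $\overline{\mathbb{Q}}$-rational central fibre. (Your closing remark about $p=1$ is right in spirit --- this collision phenomenon is why divisors behave differently, in line with Theorem~\ref{theorem: Ng's theorem} --- but the correct dichotomy is not ``arithmetic parts of geometric tangents vanish when the centre is over $\overline{\mathbb{Q}}$''; they do not.)
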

This means only possibilities (i)-(iii) can occur for $p \geq 2$. Green-Griffiths conjectures that 
\begin{conjecture}  [page 190 in \cite{GGtangentspace}] \label{question: obstruction}
(ii) and (iii) above are the only possibilities that actually occur for $p \geq 2$.
\end{conjecture}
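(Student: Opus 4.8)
The plan is to establish the conjecture by showing that possibility (i) never occurs — that is, $TZ^p(X)$ is always formally unobstructed — and then invoking the preceding Proposition of Green--Griffiths, which exhibits for every $p \geq 2$ a class $\tau \in TZ^p(X)$ that is not tangent to a geometric arc and thereby rules out possibility (iv). Once (i) and (iv) are excluded, only (ii) and (iii) remain, which is exactly the assertion. So the whole problem reduces to proving formal unobstructedness, i.e. Conjecture \ref{question: question 2}. In the K-theoretic language of Definition \ref{definition: deformation}, this amounts to showing that every $\xi_j \in Z^M_p(D^{\mathrm{perf}}(X_j))$ admits a deformation $\xi_{j+1}$, i.e. that each transition map $f_j^\ast \colon Z^M_p(D^{\mathrm{perf}}(X_{j+1})) \to Z^M_p(D^{\mathrm{perf}}(X_j))$ is surjective; surjectivity for all $j$ then produces a compatible system in $\varprojlim_j Z^M_p(D^{\mathrm{perf}}(X_j))$ lifting any prescribed first-order datum $\tau$.

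The mechanism is the obstruction analysis already set up in Section \ref{Obstructions and negative K-groups}. Given $\xi_j \in \mathrm{Ker}(d^{p,-p}_{1,X_j})$, Lemma \ref{lemma: keylemma} provides a lift $\tilde\xi_{j+1}$ in codimension $p$, and $d^{p,-p}_{1,X_{j+1}}(\tilde\xi_{j+1})$ lands in the obstruction space $\bigoplus_{x \in X^{(p+1)}} H^{p+1}_x(\Omega^{p-1}_{X/\mathbb{Q}})$ of Definition \ref{definition: obstructionspace}, built from the negative K-groups $K^M_{-1}$. Changing $\tilde\xi_{j+1}$ by $\mathrm{Ker}(f_j^\ast)$ alters this class by the image of $\partial^{p,-p}_1$, so the genuine obstruction is a well-defined class in $\mathrm{coker}(\partial^{p,-p}_1)$. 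My claim is that this class always vanishes, and the reason is a weight decomposition: the $\mathbb{G}_m$-action $t \mapsto ct$ on $X_{j+1} = X \times_k \mathrm{Spec}(k[t]/t^{j+1})$ induces on each K-group with support the grading $K^M_m(O_{X_j,x_j} \ \mathrm{on} \ x_j) \cong A \oplus tB \oplus \cdots \oplus t^j B$ of Lemma \ref{lemma: keylemma}, with $A = K^M_m(O_{X,x} \ \mathrm{on} \ x)$ and $B = H^i_x(\Omega^{m+i-1}_{X/\mathbb{Q}})$.

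First I would verify that the entire Milnor K-theoretic Gersten complex of $X_{j+1}$ is $\mathbb{G}_m$-equivariant for this action, so that every differential $d_{1,X_{j+1}}$ preserves the $t$-weight. This uses the naturality of the Gillet--Soul\'e Adams operations (Theorem \ref{theorem: GilletSoule}) and the Chern character identifications of Theorem \ref{theorem: CompRelatK} and Theorem \ref{theorem: TanMInorFinal}, which are themselves equivariant. Granting weight-preservation, the short exact sequence of Gersten complexes
\[
0 \to C^\bullet\bigl(\Omega^{p-1}_{X/\mathbb{Q}}\bigr) \to C^\bullet_{\mathrm{Mil}}(X_{j+1}) \xrightarrow{f_j^\ast} C^\bullet_{\mathrm{Mil}}(X_j) \to 0
\]
coming from Lemma \ref{lemma: keylemma} — with middle and quotient the Milnor K-theoretic Gersten complexes of $X_{j+1}$ and $X_j$, and sub the Cousin complex of $\Omega^{p-1}_{X/\mathbb{Q}}$ — splits \emph{as a sequence of complexes}, the splitting being the weight grading (the top weight $t^{j+1}$ giving the sub, the weights $\leq j$ giving a section of $f_j^\ast$). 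A split short exact sequence of complexes has vanishing connecting maps, so kernels add: $Z^M_p(D^{\mathrm{perf}}(X_{j+1})) \cong TZ^p(X) \oplus Z^M_p(D^{\mathrm{perf}}(X_j))$, with $f_j^\ast$ the projection. Concretely, one may simply take $\tilde\xi_{j+1}$ to be $\xi_j$ extended by zero in weight $t^{j+1}$; weight-preservation then forces $d^{p,-p}_{1,X_{j+1}}(\tilde\xi_{j+1}) = 0$, so the obstruction vanishes and $f_j^\ast$ is surjective.

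The main obstacle is precisely the global weight-preservation in the middle step: Lemma \ref{lemma: keylemma} and the commutative square of Section \ref{Obstructions and negative K-groups} only record compatibility of $d_{1,X_{j+1}}$ with the single truncation $f_j^\ast$, i.e. with the $t$-adic \emph{filtration}, whereas I need the differential to respect the finer $t$-\emph{grading} so that the sequence splits as complexes rather than merely degreewise. Establishing this requires checking that the Chern character to negative cyclic homology, through which the identifications of Theorem \ref{theorem: TanMInorFinal} are obtained, intertwines the $\mathbb{G}_m$-weights on K-theory with the $t$-grading on the cyclic homology of the truncated polynomial algebra $k[t]/t^{j+1}$ coming from its computation — and that this holds compatibly across all $j$ at once, which is what feeds the inverse limit. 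Once this is in place, formal unobstructedness follows uniformly in $p$, possibility (i) is excluded, and together with the preceding Proposition ruling out (iv) the conjecture is proved.
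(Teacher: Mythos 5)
Your reduction is exactly the paper's: the Green--Griffiths proposition (10.11) rules out possibility (iv), so the conjecture is equivalent to formal unobstructedness, i.e. Conjecture \ref{conjecture: GGquestion2}, which the paper proves as Theorem \ref{theorem: theorem obstruction}; and unobstructedness amounts to surjectivity of each $f_j^{\ast}$ on Milnor K-theoretic cycles. The gap is at the central step of your unobstructedness argument. Everything you do rests on the claim that the Milnor K-theoretic Gersten differential $d_{1,X_{j+1}}$ respects the $t$-weight \emph{grading} of Lemma \ref{lemma: keylemma}, equivalently that the short exact sequence of Gersten complexes splits as a sequence of complexes, so that ``$\xi_j$ extended by zero in weight $t^{j+1}$'' is again a cycle. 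You concede yourself that this is the main obstacle, and you do not prove it: what the paper's results give directly (Theorem \ref{theorem: CompRelatK}, Lemma \ref{lemma: keylemma}, Theorem \ref{theorem: theoremFor obstruction issue}) is only a termwise splitting compatible with the truncation $f_j^{\ast}$, i.e. with the $t$-adic \emph{filtration}, together with gradedness of the differential on the \emph{local cohomology} side, where it comes from the cyclic homology of truncated polynomial algebras. As written, your extend-by-zero lift is not known to lie in $\mathrm{Ker}(d^{p,-p}_{1,X_{j+1}})$, so the proof is incomplete at the point where all the content sits.

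The paper's proof of Theorem \ref{theorem: theorem obstruction} is engineered to avoid exactly this claim, and the difference is worth absorbing. Instead of grading K-theory, it transports $\xi_j$ by the Chern character to local cohomology, where the grading \emph{is} available: $\mathrm{Ch}(\xi_j) = ta_1 + \cdots + t^j a_j$ with $\partial_1^{p,-p}(a_i) = 0$; it lifts this trivially to $\eta_{j+1} \in \mathrm{Ker}(\partial_{1,j+1}^{p,-p})$, and uses surjectivity of $\mathrm{Ch}$ in codimension $p$ to choose $\xi_{j+1}$ with $\mathrm{Ch}(\xi_{j+1}) = \eta_{j+1}$. Since $\mathrm{Ch}$ is an \emph{isomorphism} in codimension $p+1$ (the negative K-groups of the smooth $X$ itself vanish), it follows automatically that $d^{p,-p}_{1,X_{j+1}}(\xi_{j+1}) = 0$. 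The price is that $\xi_{j+1}|_{t^{j+1}=0}$ equals $\xi_j + W$ rather than $\xi_j$; but $W \in \mathrm{Ker}(\mathrm{Ch}) = \bigoplus_{x \in X^{(p)}} K^M_0(O_{X,x} \ \mathrm{on} \ x)$ is a weight-zero class defined over $X$, so replacing $\xi_{j+1}$ by $\xi_{j+1} - W$ corrects the restriction without changing $\mathrm{Ch}$, hence without leaving the kernel. This correction term $W$ is precisely what substitutes for the equivariance you are missing. (Your splitting can in fact be extracted from the same three facts --- $\mathrm{Ch}$ kills the weight-zero summand, is injective on relative classes, and commutes with the differentials --- but once you invoke these you have essentially reproduced the paper's mechanism; left unproved, the extend-by-zero picture is a hole, not a proof.)
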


Because of the Proposition 3.6 above, all we need to show is $TZ^{p}(X)$  is  formally unobstructed. The above question(ii) is expressed in a way, as if $Z^{p}(X)$ were a scheme.  In fact, we know $Z^{p}(X)$ can't be treated as a scheme. So we restate this conjecture as follows:  
\begin{conjecture}   \cite{GGtangentspace} \label{conjecture: GGquestion2}
 Let $X$ be a smooth projective variety over a field $k$ of characteristic $0$. For each positive integer $p$, $TZ^{p}(X)$  is formally unobstructed. That is, for any $\tau \in TZ^{p}(X)$, $\tau$ can be lifted 
to $\tau_{j} \in Z^{M}_{p}(D^{\mathrm{perf}}(X_{j})$ successively, where $j=1,2, \cdots$. 
\end{conjecture}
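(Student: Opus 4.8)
The plan is to prove formal unobstructedness by lifting one order at a time up the tower $\{X_{j}\}$, and to pin the obstruction at each step to the negative $K$-group term singled out in Definition~\ref{definition: obstructionspace}. By Theorem~\ref{theorem: compute tangent space} I identify $TZ^{p}(X)$ with $\mathrm{Ker}(\partial_{1}^{p,-p})$. By Lemma~\ref{lemma: keylemma} the map $f_{j}^{\ast}$ is surjective with kernel $\bigoplus_{x \in X^{(q)}}H_{x}^{q}(\Omega_{X/\mathbb{Q}}^{p-1})$ at the $q$-th spot (take $m=p-q$, $i=q$), and the canonical splitting $K^{M}_{m}(O_{X_{j},x_{j}} \ \mathrm{on} \ x_{j}) \cong A \oplus tB \oplus \cdots \oplus t^{j}B$ recorded just before it exhibits this kernel as a single copy of the Cousin complex $D^{\bullet}$ of $\Omega_{X/\mathbb{Q}}^{p-1}$, independent of $j$. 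Writing $C_{j}^{\bullet}$ for the Gersten-type complex of $X_{j}$, I thus obtain a short exact sequence of complexes
\[
0 \to D^{\bullet} \to C_{j+1}^{\bullet} \xrightarrow{f_{j}^{\ast}} C_{j}^{\bullet} \to 0,
\]
in which $Z^{M}_{p}(D^{\mathrm{perf}}(X_{j})) = \mathrm{Ker}(d_{1,X_{j}}^{p,-p})$ sits as the cocycles in degree $p$.

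First I would locate the obstruction. Given a lift $\tau_{j} \in \mathrm{Ker}(d_{1,X_{j}}^{p,-p})$ of $\tau$ already built, choose any preimage $\widetilde{\tau}_{j+1}$ under the surjection $f_{j}^{\ast}$. The diagram chase of Section~\ref{Obstructions and negative K-groups} shows $d_{1,X_{j+1}}^{p,-p}(\widetilde{\tau}_{j+1}) \in \mathrm{Ker}(f_{j}^{\ast}) = \bigoplus_{x \in X^{(p+1)}}H_{x}^{p+1}(\Omega_{X/\mathbb{Q}}^{p-1})$, the obstruction space of Definition~\ref{definition: obstructionspace}. Since $d_{1,X_{j+1}} \circ d_{1,X_{j+1}} = 0$ this element is $\partial_{1}^{p+1,-p}$-closed, hence defines a class in $H^{p+1}(D^{\bullet}) \cong H^{p+1}(X,\Omega_{X/\mathbb{Q}}^{p-1})$; replacing $\widetilde{\tau}_{j+1}$ by another preimage changes it by $\mathrm{Im}(\partial_{1}^{p,-p})$, so the class is well defined and $\tau_{j}$ lifts to a cocycle precisely when it vanishes. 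This is the sense in which negative $K$-groups \emph{detect} the obstruction.

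The key step is to show this class is always zero. The splitting above realizes the relative part of $C_{j}^{\bullet}$ as the direct sum $tD^{\bullet} \oplus \cdots \oplus t^{j}D^{\bullet}$ of copies of the Cousin complex indexed by powers of $t$. I would prove that the Gersten differential $d_{1,X_{j}}$ preserves this $t$-grading, acting as $\partial_{1}$ on each graded copy with no off-diagonal mixing into higher powers of $t$. For this I would use the explicit Chern character description of Ang\'enoil and Lejeune-Jalabert already exploited for Theorem~\ref{theorem: Inkernel-LCI}: the Gersten boundary of a representative is computed by an $\mathrm{Ext}$/residue symbol through the truncation map $\rfloor \frac{\partial}{\partial t}$, and since $t$ is nilpotent and is never inverted in the localizations defining that boundary, multiplication by $t$ commutes with the construction and the boundary respects the grading. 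Granting this, the off-diagonal component of $d_{1,X_{j+1}}$ vanishes, so the sequence of complexes splits as complexes; the naive lift $\tau_{j+1} := \tau_{j}$, regarded in $C_{j+1}^{\bullet}$ through the lower powers of $t$, is then already a cocycle, its graded components coinciding with the $\partial_{1}^{p,-p}$-closed components of $\tau_{j}$. Beginning with $\tau_{1} = \tau \in \mathrm{Ker}(\partial_{1}^{p,-p})$, sitting purely in the $t$-linear copy of $D^{p}$, and iterating yields the required successive lifts $\tau_{j}$.

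The hard part is establishing this $t$-grading compatibility of $d_{1,X_{j}}$ at \emph{every} order $j$, rather than only at $j=1$, where it is packaged in the commutative diagram of Theorem~\ref{theorem: TanMInorFinal}. This needs the higher-order analogue of the cyclic-homology computation underlying Theorem~\ref{theorem: CompRelatK}, together with naturality of the Chern character along the tower maps $f_{j} : X_{j} \to X_{j+1}$, so that the identification $\mathrm{Ker}(f_{j}^{\ast}) \cong D^{\bullet}$ is compatible with differentials and the mixing component of $d_{1,X_{j+1}}$ is forced to vanish. Once this compatibility is secured, the obstruction living in the negative $K$-group term is identically zero and the successive lifting goes through, establishing that $TZ^{p}(X)$ is formally unobstructed as in Conjecture~\ref{conjecture: GGquestion2}.
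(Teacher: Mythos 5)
Your framework points in the right direction, and your reduction of the lifting problem to the $t$-graded structure of the relative part is the same underlying mechanism as the paper's proof. But the pivotal step of your argument is a genuine gap, not a routine verification. Everything in your proposal funnels into the claim that the Gersten differential $d_{1,X_{j+1}}$ preserves the $t$-grading of the termwise splitting $K^{M}_{0}(O_{X_{j+1},x_{j+1}} \ \mathrm{on} \ x_{j+1}) \cong A \oplus tB \oplus \cdots \oplus t^{j+1}B$, so that your short exact sequence of complexes splits and the naive lift of $\tau_{j}$ is automatically a cocycle. Your justification (``$t$ is nilpotent and never inverted in the localizations, so the boundary respects the grading'') is heuristic, and you then explicitly defer the statement as ``the hard part''. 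Note that it cannot follow from functoriality: the section $\iota$ you need is not induced by any ring map, since there is no ring homomorphism $k[t]/(t^{j+1}) \to k[t]/(t^{j+2})$ sending $t$ to $t$. Worse, the $t$-grading on the relative K-groups is itself \emph{defined} through the Chern character isomorphism of Theorem~\ref{theorem: CompRelatK} together with the decomposition of $H_{x}^{i}((\Omega^{m+i-1}_{X/\mathbb{Q}})^{\oplus j})$ into $t$-graded pieces; so ``$d_{1,X_{j+1}}$ respects the grading'' has no content at the K-theory level independent of the Chern character, and even the well-definedness of your obstruction class in $H^{p+1}(X,\Omega^{p-1}_{X/\mathbb{Q}})$ already presupposes it.

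The paper's proof is engineered so that no such splitting on the K-theory side is ever needed. It pushes $\xi_{j}$ forward by $\mathrm{Ch}$ into $\bigoplus_{x} H_{x}^{p}((\Omega^{p-1}_{X/\mathbb{Q}})^{\oplus j})$, where the differential $\partial_{1,j}^{p,-p} = t\partial_{1}^{p,-p} \oplus \cdots \oplus t^{j}\partial_{1}^{p,-p}$ is graded (this is where the cyclic homology of truncated polynomial rings enters) and lifting is trivial: append $t^{j+1}a_{j+1}$ with $\partial_{1}^{p,-p}(a_{j+1})=0$. It then uses surjectivity of $\mathrm{Ch}$ to choose a K-theoretic preimage $\xi_{j+1}$ of this lift and --- here is the device your naive-lift strategy is missing --- corrects it by $W := \xi_{j+1}|_{t^{j+1}=0} - \xi_{j} \in \mathrm{Ker}(\mathrm{Ch}) = \bigoplus_{x} K^{M}_{0}(O_{X,x} \ \mathrm{on} \ x)$, so that $(\xi_{j+1}-W)|_{t^{j+1}=0}=\xi_{j}$ on the nose. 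The cocycle condition for $\xi_{j+1}-W$ is then verified \emph{after} applying $\mathrm{Ch}$, which suffices because the bottom Chern character $\bigoplus_{x_{j+1}} K^{M}_{-1}(O_{X_{j+1},x_{j+1}} \ \mathrm{on} \ x_{j+1}) \to \bigoplus_{x} H_{x}^{p+1}((\Omega^{p-1}_{X/\mathbb{Q}})^{\oplus j+1})$ is an isomorphism, ultimately because $K^{M}_{-1}(O_{X,x} \ \mathrm{on} \ x)=0$ on the smooth fiber. Your grading claim is in fact true at the one degree where you need it, but the only available proof runs through exactly these ingredients (commutation of $\mathrm{Ch}$ with the differentials from Theorem~\ref{theorem: theoremFor obstruction issue}, the identification of $\mathrm{Ker}(\mathrm{Ch})$, and injectivity of the bottom $\mathrm{Ch}$); filling your gap therefore amounts to reconstructing the paper's argument, which your proposal as written does not contain.
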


To get a feeling of  how to eliminate obstructions to deforming cycles, we firstly look at locally complete intersections.

For $X$ a smooth projective variety over a field $k$ of characteristic $0$ and $Y \subset X$ a subvariety, which is a locally complete intersection of codimension $p$. We assume that, on an open affine $U_{i} \subset X$,  
$Y \cap U_{i}$ is defined by a regular sequence $(f^{i}_{1}, \cdots, f^{i}_{p})$, where $f^{i}_{\ast} \in O_{X}(U_{i})$.  On another open affine $U_{j} \subset X$,  $Y \cap U_{j}$ is defined by a regular sequence $(f^{j}_{1}, \cdots, f^{j}_{p})$, where $f^{j}_{\ast} \in O_{X}(U_{j})$.

Let $Y'$ be a first order deformation of $Y$ in $X[\varepsilon]$, then 
$Y' \cap U_{i}$ is given by lifting $(f^{i}_{1}, \cdots, f^{i}_{p})$ to $(f^{i}_{1}+ \varepsilon g^{i}_{1}, \cdots, f^{i}_{p}+ \varepsilon g^{i}_{p})$ , where $g^{i}_{\ast} \in O_{X}(U_{i})$.  And $Y' \cap U_{j}$ is given by lifting $(f^{j}_{1}, \cdots, f^{j}_{p})$ to $(f^{j}_{1}+ \varepsilon g^{j}_{1}, \cdots, f^{j}_{p}+ \varepsilon g^{j}_{p})$ , where $g^{j}_{\ast} \in O_{X}(U_{j})$.

On the intersection $U_{ij}=U_{i} \cap U_{j}$, there exists two liftings which defines an element of $\alpha_{ij} \in \Gamma(U_{ij}, \mathcal{N}_{Y/X})$,
where $\mathcal{N}_{Y/X}$ is the normal sheaf. On the intersection $U_{ijk}=U_{i} \cap U_{j} \cap U_{k}$ of three open affine subschemes, there are three liftings which defines $\alpha_{ij}$, $\alpha_{jk}$, $\alpha_{ik}$.  One checks $(\alpha_{ij})$ is a \v{C}ech 1-cocyle, which is the obstruction of finding a global lifting $Y'$, see Theorem 6.2(page 47) of \cite{Hartshorne2} for details.

 Let $y \in Y$ be the generic point, then $y \in U_{i}$.
 One has $O_{X,y} = O_{U_{i},y}=O_{X}(U_{i})_{(f^{i}_{1}, \cdots, f^{i}_{p})}$, with maximal ideal $(f^{i}_{1}, \cdots, f^{i}_{p})$. So $Y$ is generically generated by $(f^{i}_{1}, \cdots, f^{i}_{p})$ and the Koszul complex $F_{\bullet}(f^{i}_{1}, \cdots, f^{i}_{p}) \in K^{(p)}_{0}(O_{X,y} \ \mathrm{on} \  y) \subset  Z^{M}_{p}(D^{\mathrm{perf}}(X))$.

 We have shown that, in Theorem \ref{theorem: Inkernel-LCI}, the Koszul complex $F_{\bullet}(f^{i}_{1}+ \varepsilon g^{i}_{1}, \cdots, f^{i}_{p}+ \varepsilon g^{i}_{p}) \in Z^{M}_{p}(D^{\mathrm{perf}}(X_{1}))$, which lifts $F_{\bullet}(f^{i}_{1}, \cdots, f^{i}_{p})$. So the obstructions of gluing(as a subscheme) $Y' \cap U_{i}$ and $Y' \cap U_{j}$ along the intersection $U_{ij}=U_{i} \cap U_{j}$ obvious vanishes(To lift K-theoretic cycles, we don't need to glue).
 By mimicking the proof of Theorem \ref{theorem: Inkernel-LCI}, we can show that the Koszul complex $F_{\bullet}(f^{i}_{1}+ \varepsilon g^{i}_{1}+ \varepsilon^{2}h^{i}_{1}, \cdots, f^{i}_{p}+ \varepsilon g^{i}_{p}+\varepsilon^{2}h^{i}_{p})$, where $\varepsilon^{2} \neq 0$, $\varepsilon^{3} = 0$, and $h^{i}_{\ast} \in O_{X}(U_{i})$, is a Milnor K-theoretic $p$-cycle and lifts $F_{\bullet}(f^{i}_{1}+ \varepsilon g^{i}_{1}, \cdots, f^{i}_{p}+ \varepsilon g^{i}_{p})$. Furthermore, we can lift $F_{\bullet}(f^{i}_{1}+ \varepsilon g^{i}_{1}+ \varepsilon^{2}h^{i}_{1}, \cdots, f^{i}_{p}+ \varepsilon g^{i}_{p}+\varepsilon^{2}h^{i}_{p})$ to higher order successively.  In summary, we have shown that
\begin{lemma} \label{lemma: LCI-ObsVanish}
For $X$ a smooth projective variety over a field $k$ of characteristic $0$ and $Y \subset X$ a subvariety, which is locally defined by a regular sequence $(f_{1}, \cdots, f_{p})$,  let $F_{\bullet}(f_{1}, \cdots, f_{p})$ denote the associated Koszul complex, which defines a K-theoretic cycle in $Z^{M}_{p}(D^{\mathrm{perf}}(X))$, 
we can lift  this K-theoretic cycle to higher order successively.
\end{lemma}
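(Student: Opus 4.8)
The plan is to produce, for each $j$, an explicit Koszul cycle lifting the previous one, and to verify it lies in the kernel of the relevant differential by mimicking the argument of Theorem \ref{theorem: Inkernel-LCI} one graded piece at a time. Let $y$ be the generic point of $Y$, so that $Y$ is generically cut out by the regular sequence $(f_1, \dots, f_p)$ with $f_i \in O_{X,y}$. To lift to order $j$, first I would lift each $f_i$ to
\[
\tilde{f}_i = f_i + t g_i^{(1)} + t^2 g_i^{(2)} + \cdots + t^j g_i^{(j)} \in O_{X_j, y},
\]
where the first-order terms $g_i^{(1)}$ are prescribed by the given element of $TZ^p(X)$ and the higher terms may be chosen arbitrarily (for instance all zero). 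Since $(f_1, \dots, f_p)$ is a regular sequence and $O_{X_j,y}$ is flat over $O_{X,y}$, the lifted tuple is again a regular sequence, and $F_\bullet(\tilde f_1, \dots, \tilde f_p)$ resolves the quotient. By the natural extension of Theorem \ref{theorem: GilletSoule} this Koszul complex has Adams eigenweight $p$, hence defines a class $\xi_j \in K_0^M(O_{X_j,y} \ \mathrm{on} \ y) \subset \bigoplus_{x_j} K_0^M(O_{X_j,x_j} \ \mathrm{on} \ x_j)$. By construction the truncation $t^j = 0$, which is the map $f_{j-1}^{\ast}$, sends $F_\bullet(\tilde f_1, \dots, \tilde f_p)$ to the order-$(j-1)$ Koszul complex, so $\xi_j$ is automatically a deformation of $\xi_{j-1}$ in the sense of Definition \ref{definition: deformation}. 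Thus the only thing left to establish is that $\xi_j \in \mathrm{Ker}(d_{1,X_j}^{p,-p})$.

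To prove this I would exploit the commutative diagram of Theorem \ref{theorem: TanMInorFinal}, together with the fact that its right-hand vertical map between $K_{-1}$ and $H^{p+1}$ is an isomorphism, so that $d_{1,X_j}^{p,-p}(\xi_j) = 0$ is equivalent to $\partial_1^{p,-p}(\mathrm{Ch}(\xi_j)) = 0$. By Theorem \ref{theorem: CompRelatK} the target of the Chern character now carries the graded decomposition $tH_y^p(\Omega^{p-1}_{X/\mathbb{Q}}) \oplus \cdots \oplus t^j H_y^p(\Omega^{p-1}_{X/\mathbb{Q}})$, and $\partial_1^{p,-p}$ respects this grading. Computing $\mathrm{Ch}(\xi_j)$ by the Ang\'enoil--Lejeune-Jalabert construction $\frac{1}{p!}d\tilde A_1 \circ \cdots \circ d\tilde A_p$ and extracting the coefficient of each power $t^s$ (the higher-order analogue of the truncation $\rfloor \frac{\partial}{\partial \varepsilon}\mid_{\varepsilon=0}$ used in Theorem \ref{theorem: Inkernel-LCI}), I expect each graded piece to be represented by an Ext class of exactly the Koszul shape of diagram (2.3)--(2.4), namely $g_1^{(s)}\, df_2 \wedge \cdots \wedge df_p$ after the usual simplification $g_2^{(s)} = \cdots = g_p^{(s)} = 0$.

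The vanishing then follows piece by piece exactly as in Theorem \ref{theorem: Inkernel-LCI}: extending $(f_1, \dots, f_p)$ to a system of parameters and passing to a codimension $p+1$ point $z$, the differential $\partial_1^{p,-p}$ inserts the additional regular element $f_{p+1}$ into the Koszul boundary $A_{p+1}$, so that each class $g_1^{(s)} f_{p+1}\, df_2 \wedge \cdots \wedge df_p$ becomes zero in $\mathrm{Ext}^{p+1}_{O_{X,z}}(O_{X,z}/(f_1, \dots, f_{p+1}), \Omega^{p-1}_{X/\mathbb{Q}})$. Hence $\partial_1^{p,-p}(\mathrm{Ch}(\xi_j)) = 0$ in every graded degree, giving $d_{1,X_j}^{p,-p}(\xi_j) = 0$ and $\xi_j \in Z^M_p(D^{\mathrm{perf}}(X_j))$. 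Since the construction is compatible with the truncation maps $f_{j-1}^{\ast}$, iterating over $j = 1, 2, \dots$ produces the desired successive lifts. I expect the main obstacle to be the Chern character computation itself, namely verifying rigorously that the higher-graded pieces of $\mathrm{Ch}$ applied to the order-$j$ Koszul complex are again of the clean Koszul/Ext form; this requires careful $t$-adic bookkeeping in the Ang\'enoil--Lejeune-Jalabert formula, after which the vanishing under $\partial_1^{p,-p}$ is formally identical to the first-order case.
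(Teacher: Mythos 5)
Your proposal takes essentially the same route as the paper: the paper's own justification of this lemma is exactly to lift the regular sequence order by order, invoke Theorem \ref{theorem: Inkernel-LCI} at first order, and then ``mimic'' its Chern character/Ext-vanishing argument (inserting $f_{p+1}$ into the Koszul boundary at a codimension $p+1$ point) at each higher order, iterating successively. Your graded $t$-adic bookkeeping via Theorem \ref{theorem: CompRelatK} is just a more explicit rendering of that same mimicking step, so the two arguments coincide.
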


In general, $Y \subset X$ may not be a locally complete intersection. To eliminate the obstructions to lifting $Y$ to higher order, we need to use the following strategy which has been known to Green-Griffiths  \cite{GGtangentspace}(page 187-189) and Ng \cite{Ng} for the divisor case.
We should introduce another cycle $Z$ to help $Y$ to eliminate obstructions. As a cycle, 
\[
Y = (Y+Z)-Z,
\]
and the cycle $Z$ should satisfy that 
\begin{itemize}
\item (1) One can lift $(Y+Z)$ to higher order successively, i.e., $Z$ helps $Y$ to eliminate obstructions.

\item  (2) $Z$ doesn't introduce new obstructions. 
\end{itemize}

To illustrate the idea, we sketch an example of curves on a three-fold and refer the readers to \cite{Y-4} for details. For $X$ a nonsingular projective 3-fold over a field $k$ of characteristic $0$, let $Y \subset X$ be a curve with generic point $y$. 
For a point $x \in Y \subset X$ which is defined by $(f ,g, h)$, we assume $Y$
is generically defined by $(f, g)$. The Koszul complex $F_{\bullet}(f, g)$ is a K-theoretic 2-cycle:
\[
F_{\bullet}(f, g)  \in K_{0}(O_{X,y} \ \mathrm{on} \ y)  \subset Z^{M}_{2}(D^{\mathrm{perf}}(X)).
\]

For a first order deformation $Y'$ which is generically given by $(f + \varepsilon \dfrac{1}{h}, g)$, the Koszul complex $F_{\bullet}(f + \varepsilon \dfrac{1}{h}, g)$ associated to $(f + \varepsilon \dfrac{1}{h}, g)$ is in $K_{0}(O_{X,y}[\varepsilon] \ \mathrm{on} \ y) $, but we can show it is not in $ Z^{M}_{2}(D^{\mathrm{perf}}(X[\varepsilon]))$, see Example 4.4 in \cite{Y-3}.
 So $F_{\bullet}(f + \varepsilon \dfrac{1}{h}, g)$ is not a first order deformation of  $F_{\bullet}(f , g)$. To modify this, we consider the curve $Z$ on $X$ which is generically defined by $(h, g)$. As a cycle, 
\[
Y = (Y+Z)-Z.
\]
As a K-theoretic cycle, 
\[
F_{\bullet}(f , g)= (F_{\bullet}(f , g)+F_{\bullet}(h , g))-F_{\bullet}(h , g).
\]
To lift $F_{\bullet}(f , g)$ is equivalent to lifting $(F_{\bullet}(f , g)+F_{\bullet}(h , g))$ and $F_{\bullet}(h , g)$ respectively.  We can show that $(F_{\bullet}(f + \varepsilon \dfrac{1}{h}, g)+F_{\bullet}(h + \varepsilon \dfrac{1}{f} , g))$ is in $Z^{M}_{2}(D^{\mathrm{perf}}(X[\varepsilon]))$.
And it is a first order deformation of $(F_{\bullet}(f , g)+F_{\bullet}(h , g))$, and can be lifted to higher order successively. On the other hand, $F_{\bullet}(h , g)$ is always a first order deformation of itself, which means we fix $F_{\bullet}(h , g)$ so it doesn't introduce new obstructions. Consequently, 
\[
(F_{\bullet}(f + \varepsilon \dfrac{1}{h}, g)+F_{\bullet}(h + \varepsilon \dfrac{1}{f} , g)) - F_{\bullet}(h , g)
\]
is a first order deformation of $F_{\bullet}(f , g)$, and can be lifted to higher order successively.

However, as pointed out in Remark \ref{remark: notallKoszul}, in general, we don't know whether the Milnor K-theoretic cycles $Z^{M}_{p}(D^{\mathrm{perf}}(X_{j}))$ are generated by these Koszul complexes or not. So, to answer Green-Griffiths' Conjecture \ref{conjecture: GGquestion2}, we have to give a formal argument which relys on the following theorem:

\newpage

\begin{theorem} [\cite{DHY}, Theorem 3.14 in \cite{Y-2}]  \label{theorem: theoremFor obstruction issue}
For $X$ a smooth projective variety over a field k of characteristic 0, $X_{j} = X \times_{k} \mathrm{Spec}(k[t]/(t^{j+1}))$,  where $j$ is any positive integer. For each integer $p\geqslant 1$, there exists the following commutative diagram in which the Zariski sheafification of each column is a flasque resolution of $ (\Omega_{X/ \mathbb{Q}}^{p-1})^{\oplus j}$,  $K^{M}_{p}(O_{X_{j}})$ and $K^{M}_{p}(O_{X})$
respectively. The left arrows are induced by Chern characters from K-theory to negative cyclic homology and the right ones are the natural maps sending $\varepsilon$ to $0$:
{\scriptsize
\[
  \begin{CD}
     0 @. 0 @. 0\\
      @VVV @VVV @VVV\\
     (\Omega_{k(X)/ \mathbb{Q}}^{p-1})^{\oplus j} @<<< K^{M}_{p}(k(X_{j})) @>>> K^{M}_{p}(k(X))  \\
     @VVV @VVV @VVV\\
      \bigoplus\limits_{x \in X^{(1)}}H_{x}^{1}((\Omega_{X/\mathbb{Q}}^{p-1})^{\oplus j}) @<<< \bigoplus\limits_{x_{j} \in X_{j} ^{(1)}}K^{M}_{p-1}(O_{X_{j},x_{j}} \ \mathrm{on} \ x_{j}) @>>>  \bigoplus\limits_{x \in X ^{(1)}}K^{M}_{p-1}(O_{X,x} \ \mathrm{on} \ x) \\
     @VVV @VVV @VVV\\
      \dots @<<< \dots @>>> \dots \\ 
      @VVV @VVV @VVV\\
     \bigoplus\limits_{x \in X^{(p-1)}}H_{x}^{p-1}((\Omega_{X/\mathbb{Q}}^{p-1})^{\oplus j}) @<<< \bigoplus\limits_{x_{j} \in X_{j} ^{(p-1)}}K^{M}_{1}(O_{X_{j},x_{j}} \ \mathrm{on} \ x_{j})
      @>>>  \bigoplus\limits_{x \in X^{(p-1)}}K^{M}_{1}(O_{X,x} \ \mathrm{on} \ x) \\
     @V \partial_{1,j}^{p-1,-p}VV @Vd_{1,X_{j}}^{p-1,-p}VV @Vd_{1, X}^{p-1,-p}VV\\
     \bigoplus\limits_{x \in X^{(p)}}H_{x}^{p}((\Omega_{X/\mathbb{Q}}^{p-1})^{\oplus j}) @<<< \bigoplus\limits_{x_{j} \in X_{j} ^{(p)}}K^{M}_{0}(O_{X_{j},x_{j}} \ \mathrm{on} \ x_{j})
      @>>> \bigoplus\limits_{x \in X^{(p)}}K^{M}_{0}(O_{X,x} \ \mathrm{on} \ x) \\
     @V \partial_{1,j}^{p,-p}VV @Vd_{1,X_{j}}^{p,-p}VV @Vd_{1,X}^{p,-p}VV\\
     \bigoplus\limits_{x \in X^{(p+1)}}H_{x}^{p+1}((\Omega_{X/\mathbb{Q}}^{p-1})^{\oplus j}) @< \cong<< \bigoplus\limits_{x_{j} \in X_{j} ^{(p+1)}}K^{M}_{-1}(O_{X_{j},x_{j}} \ \mathrm{on} \ x_{j})
      @>>> \bigoplus\limits_{x \in X^{(p+1)}}K^{M}_{-1}(O_{X,x} \ \mathrm{on} \ x) =0 \\
     @VVV @VVV @VVV\\
     \cdots @<<< \cdots @>>> \cdots \\ 
     @VVV @VVV @VVV\\
     \bigoplus\limits_{x \in X^{(d)}}H_{x}^{d}((\Omega_{X/\mathbb{Q}}^{p-1})^{\oplus j}) @<<<  \bigoplus\limits_{x_{j} \in X_{j}^{(d)}}K^{M}_{q-d}(O_{X_{j},x_{j}} \ \mathrm{on} \ x_{j})
   @>>> \bigoplus\limits_{x \in X^{(d)}}K^{M}_{q-d}(O_{X,x} \ \mathrm{on} \ x)  \\
     @VVV @VVV @VVV\\
      0 @. 0 @. 0.
  \end{CD}
\]
}
\end{theorem}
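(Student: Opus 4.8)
The plan is to treat this as the order-$j$ generalization of the order-one diagram in Theorem~\ref{theorem: TanMInorFinal}, assembling the three Gersten/Cousin-type complexes column by column and then producing the two systems of horizontal arrows together with the commutativity of every square. First I would build the middle and right columns as the Gersten complexes of Milnor K-theory with support on $X_j$ and on $X$. By Definition~\ref{definition:Milnor K-theory with support} these groups are the Adams eigenspaces $K_m^{(m+i)}(\cdot)_{\mathbb{Q}}$ of the Thomason--Trobaugh K-groups with support, and the fact that they form an exact complex is exactly the content of Theorem 3.14 in \cite{Y-2}; Soul\'e's Riemann--Roch without denominators, recalled in Section~\ref{Why use Milnor K-theory}, guarantees that only a single eigenweight survives at each codimension, so the complex is the Milnor part of the coniveau complex. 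Flasqueness of the Zariski sheafification of each column is automatic, since every term is a direct sum of skyscraper-type sheaves supported on the points of a fixed codimension, and exactness of the stalk complexes (the Gersten resolution for the local rings $O_{X_j,x_j}$ and $O_{X,x}$) identifies the sheafifications with flasque resolutions of $K_p^M(O_{X_j})$ and $K_p^M(O_X)$.

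For the left column I would use the Cousin complex of the quasi-coherent sheaf $(\Omega_{X/\mathbb{Q}}^{p-1})^{\oplus j}$. Because $X$ is smooth over $k$, hence Cohen--Macaulay, the local cohomology of the flat sheaf $\Omega_{X/\mathbb{Q}}^{p-1}$ is concentrated in the single degree $i=\mathrm{codim}(x)$, so the Cousin complex is a flasque resolution; the extra factor of $j$ reflects the splitting $H_x^i((\Omega_{X/\mathbb{Q}}^{p-1})^{\oplus j}) \cong tH_x^i(\Omega_{X/\mathbb{Q}}^{p-1}) \oplus \cdots \oplus t^j H_x^i(\Omega_{X/\mathbb{Q}}^{p-1})$ coming from the Hochschild and cyclic homology of the truncated polynomial ring $k[t]/(t^{j+1})$.

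Next I would produce the horizontal arrows. The right arrows are induced by the ring map $O_{X_j} \to O_X$ sending $t \mapsto 0$; functoriality of K-theory with support turns these into a map of Gersten complexes, and Theorem~\ref{theorem: CompRelatK} shows the kernel at each spot is the relative group $H_x^i((\Omega_{X/\mathbb{Q}}^{p-1})^{\oplus j})$, so each row is split short exact. The left arrows are the relative Chern character from K-theory to negative cyclic homology; Theorem~\ref{theorem: CompRelatK} identifies its target with precisely the local cohomology groups forming the left column, and naturality of the Chern character immediately yields commutativity of every square that involves a horizontal $t=0$ map.

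The main obstacle is the commutativity of the squares that pair a \emph{vertical} Gersten differential $d_{1,X_j}^{\bullet,-p}$ with the Chern character, i.e.\ the statement that $\mathrm{Ch}$ intertwines the K-theoretic boundary with the Cousin differential $\partial_{1,j}^{\bullet,-p}$. I would establish this through the explicit description of $\mathrm{Ch}$ on Koszul-type local resolutions due to Ang\'enoil and Lejeune-Jalabert, already exploited in the proof of Theorem~\ref{theorem: Inkernel-LCI}, since that description renders the boundary map computable as a residue and makes its compatibility with the localization maps defining $\partial_{1,j}$ transparent. Finally, the bottom-left isomorphism, between $K_{-1}^M$ with support on codimension-$(p+1)$ points and $H_x^{p+1}((\Omega_{X/\mathbb{Q}}^{p-1})^{\oplus j})$, follows because the corresponding absolute group in the right column vanishes, so the relative group is the entire group and the Chern character of Theorem~\ref{theorem: CompRelatK} is an isomorphism there.
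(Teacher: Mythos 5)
You should first be aware that the paper contains no proof of this statement: it is imported verbatim from \cite{DHY} and Theorem 3.14 of \cite{Y-2}, exactly as its order-one case (Theorem~\ref{theorem: TanMInorFinal}) was, so your proposal can only be judged against the machinery of those references as it is reflected in the surrounding text. Judged that way, your architecture is the right one: the Cousin complex of $(\Omega^{p-1}_{X/\mathbb{Q}})^{\oplus j}$ on a Cohen--Macaulay scheme for the left column, split short exact rows coming from the relative K-group computation (Theorem~\ref{theorem: CompRelatK} together with the $t$-adic splitting of Lemma~\ref{lemma: keylemma}), flasqueness for free because every term sheafifies to a direct sum of pushforwards from points, the right column handled by Quillen/Soul\'e since $X$ is smooth, the Chern-character squares controlled by the Ang\'enoil--Lejeune-Jalabert description, and the bottom-left isomorphism deduced from the vanishing of $K^{M}_{-1}(O_{X,x} \ \mathrm{on} \ x)$.

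There is, however, one genuine logical flaw: you invoke ``Theorem 3.14 in \cite{Y-2}'' for the exactness of the middle column, but that theorem \emph{is} the statement being proved (the paper's own attribution makes this explicit), so as written the argument is circular at precisely the hardest point --- $X_{j}$ is non-reduced, so no off-the-shelf Gersten resolution applies to it and its exactness cannot be taken as an input. The repair is available from pieces you already assembled: since each row is split short exact, with kernel of $t=0$ identified by Theorem~\ref{theorem: CompRelatK} with $H^{i}_{x}((\Omega^{p-1}_{X/\mathbb{Q}})^{\oplus j})$, the middle column sits in a short exact sequence of complexes whose sub is (via the Chern character, once the ALJ computation establishes that it commutes with the differentials) the Cousin complex and whose quotient is the Gersten complex of $X$; exactness of those two outer complexes then forces exactness of the middle column by the long exact cohomology sequence. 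You should restructure the proof so that the middle column's exactness is a \emph{conclusion} of the row-splitting plus the outer columns rather than a citation, and actually carry out the Ang\'enoil--Lejeune-Jalabert verification of the Chern-character/differential squares, which your proposal only names as the main obstacle without executing.
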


Using this theorem,  we answer Green-Griffiths' Conjecture ~\ref{conjecture: GGquestion2} affirmatively:
\begin{theorem} \label{theorem: theorem obstruction}
The Conjecture \ ~\ref{conjecture: GGquestion2} is true, that is, $TZ^{p}(X)$  is  formally unobstructed.
\end{theorem}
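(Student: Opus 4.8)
The plan is to reduce formal unobstructedness to surjectivity of the transition maps $f_{j}^{*}\colon Z^{M}_{p}(D^{\mathrm{perf}}(X_{j+1})) \to Z^{M}_{p}(D^{\mathrm{perf}}(X_{j}))$ for every $j \geq 1$. Granting this surjectivity, a tangent vector $\tau = \tau_{1} \in TZ^{p}(X) \subseteq Z^{M}_{p}(D^{\mathrm{perf}}(X_{1}))$ can be lifted one order at a time to a compatible system $(\tau_{j})_{j\geq 1}$ with $f_{j}^{*}(\tau_{j+1}) = \tau_{j}$, which is exactly the assertion of Conjecture \ref{conjecture: GGquestion2}. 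So the entire problem is to produce these successive lifts, and everything will be organized around the diagram of Theorem \ref{theorem: theoremFor obstruction issue} and the relative computation of Lemma \ref{lemma: keylemma}.

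First I would transport the relevant differential from the middle column to the left column of Theorem \ref{theorem: theoremFor obstruction issue}. The crucial input is that the bottom horizontal arrow there is an \emph{isomorphism}
\[
\bigoplus_{x_{j} \in X_{j}^{(p+1)}} K^{M}_{-1}(O_{X_{j},x_{j}} \ \mathrm{on} \ x_{j}) \xrightarrow{\ \cong\ } \bigoplus_{x \in X^{(p+1)}} H_{x}^{p+1}((\Omega_{X/\mathbb{Q}}^{p-1})^{\oplus j});
\]
this is precisely where the negative K-group $K^{M}_{-1}$ enters, and it lets me replace the condition $\xi \in \mathrm{Ker}(d_{1,X_{j}}^{p,-p})$ by the condition $\partial_{1,j}^{p,-p}(\mathrm{Ch}(\xi)) = 0$ on the left-hand Cousin complex. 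Using the cyclic-homology computation for truncated polynomials (the splitting of the relative groups into $t$-graded summands $t^{s}H_{x}^{i}(\Omega_{X/\mathbb{Q}}^{m+i-1})$), the left column is $j$ copies of the single Cousin complex of $\Omega_{X/\mathbb{Q}}^{p-1}$ with $\partial_{1,j}^{p,-p}$ acting diagonally. Hence I obtain an identification $Z^{M}_{p}(D^{\mathrm{perf}}(X_{j})) \cong Z^{p}(X)_{\mathbb{Q}} \oplus \bigoplus_{s=1}^{j}\mathrm{Ker}(\partial_{1}^{p,-p})$, under which $\tau$ corresponds to the pure-tangent element $(0;\tau^{(1)})$ with $\tau^{(1)}\in\mathrm{Ker}(\partial_1^{p,-p})$ and $f_{j}^{*}$ simply forgets the top $t^{j+1}$-graded summand.

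With this identification surjectivity is immediate: given $\tau_{j}$ I append the zero class (or any element of $\mathrm{Ker}(\partial_{1}^{p,-p})$) in the new copy to obtain a lift $\tau_{j+1}$. In the obstruction language of Section \ref{Obstructions and negative K-groups} this says that for any set-theoretic lift $\xi_{j+1}$ of $\xi_{j}$ the obstruction $d_{1,X_{j+1}}^{p,-p}(\xi_{j+1}) \in \mathrm{Ker}(f_{j}^{*}) \cong \bigoplus_{x} H_{x}^{p+1}(\Omega_{X/\mathbb{Q}}^{p-1})$ equals $\partial_{1}^{p,-p}$ applied to the freely chosen top component, so it can always be annihilated. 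The main obstacle I anticipate is the second step: one must verify carefully that the transition differentials are genuinely diagonal with respect to the $t$-grading, so that the split surjection of Lemma \ref{lemma: keylemma} on each term $K_{0}^{M}(O_{X_{j},x_{j}} \ \mathrm{on} \ x_{j})$ descends to the subgroups of cycles. This compatibility — rooted in the Hochschild/cyclic homology of $k[t]/(t^{j+1})$ feeding the flasque resolutions of Theorem \ref{theorem: theoremFor obstruction issue} — is exactly what renders the a priori nonzero obstruction space $\bigoplus_{x} H_{x}^{p+1}(\Omega_{X/\mathbb{Q}}^{p-1})$ harmless.
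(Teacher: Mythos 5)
Your proposal is correct and follows essentially the same route as the paper's own proof: both arguments rest on the diagram of Theorem \ref{theorem: theoremFor obstruction issue} (with the bottom Chern character an isomorphism onto the negative K-groups), the $t$-graded diagonal form of $\partial_{1,j}^{p,-p}$, and the split exact sequence of Lemma \ref{lemma: keylemma}. The only difference is one of packaging: where you assemble these facts into an a priori direct-sum decomposition $Z^{M}_{p}(D^{\mathrm{perf}}(X_{j})) \cong Z^{p}(X)_{\mathbb{Q}} \oplus \bigoplus_{s=1}^{j}\mathrm{Ker}(\partial_{1}^{p,-p})$ so that lifting becomes appending a zero component, the paper first lifts $\mathrm{Ch}(\xi_{j})$ in the Cousin complex and then corrects a K-theoretic preimage by an element $W \in \mathrm{Ker}(\mathrm{Ch}) = \bigoplus_{x \in X^{(p)}}K^{M}_{0}(O_{X,x} \ \mathrm{on} \ x)$, which is precisely the verification that your splitting of the cycle group is compatible with the transition maps $f_{j}^{*}$.
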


\begin{proof}
For any positive integer $j$ and given any $\xi_{j} \in Z^{M}_{p}(D^{\mathrm{perf}}(X_{j}))(:=\mathrm{Ker}(d_{1,X_{j}}^{p,-p}))$, we need to show $\xi_{j}$ can be lifted to an element of $Z^{M}_{p}(D^{\mathrm{perf}}(X_{j+1}))(:=\mathrm{Ker}(d_{1,X_{j+1}}^{p,-p}))$. 
There exists the commutative diagram(part of the diagram in Theorem ~\ref{theorem: theoremFor obstruction issue}),
\[
  \begin{CD}
     \bigoplus\limits_{x \in X^{(p)}}H_{x}^{p}((\Omega_{X/\mathbb{Q}}^{p-1})^{\oplus j}) @<\mathrm{Ch}<< \bigoplus\limits_{x_{j} \in X_{j} ^{(p)}}K^{M}_{0}(O_{X_{j},x_{j}} \ \mathrm{on} \ x_{j}) \\ 
     @V \partial_{1,j}^{p,-p}VV @Vd_{1,X_{j}}^{p,-p}VV \\
     \bigoplus\limits_{x \in X^{(p+1)}}H_{x}^{p+1}((\Omega_{X/\mathbb{Q}}^{p-1})^{\oplus j}) @<\mathrm{Ch}<\cong< \bigoplus\limits_{x_{j} \in X_{j} ^{(p+1)}}K^{M}_{-1}(O_{X_{j},x_{j}} \ \mathrm{on} \ x_{j}), \\
  \end{CD}
\]
where the maps Ch are induced by Chern characters from K-theory to negative cyclic homology. It is obvious that $\mathrm{Ch}(\xi_{j}) \in \mathrm{Ker}(\partial_{1,j}^{p,-p})$.

There exists a similar commutative diagram for j+1:
\[
  \begin{CD}
     \bigoplus\limits_{x \in X^{(p)}}H_{x}^{p}((\Omega_{X/\mathbb{Q}}^{p-1})^{\oplus j+1}) @<\mathrm{Ch}<< \bigoplus\limits_{x_{j+1} \in X_{j+1} ^{(p)}}K^{M}_{0}(O_{X_{j+1},x_{j+1}} \ \mathrm{on} \ x_{j+1}) \\
     @V \partial_{1, j+1}^{p,-p}VV @Vd_{1,X_{j+1}}^{p,-p}VV \\
     \bigoplus\limits_{x \in X^{(p+1)}}H_{x}^{p+1}((\Omega_{X/\mathbb{Q}}^{p-1})^{\oplus j+1}) @<\mathrm{Ch}<\cong< \bigoplus\limits_{x_{j+1} \in X_{j+1} ^{(p+1)}}K^{M}_{-1}(O_{X_{j+1},x_{j+1}} \ \mathrm{on} \ x_{j+1}). \\
  \end{CD}
\]

As explained on page 20(isomorphism (3.2)),  $\bigoplus\limits_{x \in X^{(p)}}H_{x}^{p}((\Omega_{X/\mathbb{Q}}^{p-1})^{\oplus j})$ carries additional structure:
\[
\bigoplus\limits_{x \in X^{(p)}}H_{x}^{p}((\Omega_{X/\mathbb{Q}}^{p-1})^{\oplus j}) \cong t \bigoplus\limits_{x \in X^{(p)}}H_{x}^{p}((\Omega_{X/\mathbb{Q}}^{p-1})) \oplus \cdots \oplus t^{j} \bigoplus\limits_{x \in X^{(p)}}H_{x}^{p}((\Omega_{X/\mathbb{Q}}^{p-1})).
\]
And the differential 
\[
\partial_{1,j}^{p,-p}: \bigoplus\limits_{x \in X^{(p)}}H_{x}^{p}((\Omega_{X/\mathbb{Q}}^{p-1})^{\oplus j}) \to \bigoplus\limits_{x \in X^{(p+1)}}H_{x}^{p+1}((\Omega_{X/\mathbb{Q}}^{p-1})^{\oplus j}) 
\]
is $t \partial_{1}^{p,-p} \oplus \cdots \oplus t^{j} \partial_{1}^{p,-p}$:
\[
\begin{CD}
\bigoplus\limits_{x \in X^{(p)}}H_{x}^{p}((\Omega_{X/\mathbb{Q}}^{p-1})^{\oplus j})  @>\cong>>  t \bigoplus\limits_{x \in X^{(p)}}H_{x}^{p}((\Omega_{X/\mathbb{Q}}^{p-1})) \oplus \cdots \oplus t^{j} \bigoplus\limits_{x \in X^{(p)}}H_{x}^{p}((\Omega_{X/\mathbb{Q}}^{p-1})) \\
@V \partial_{1}^{p,-p}VV @Vt \partial_{1}^{p,-p} \oplus \cdots \oplus t^{j} \partial_{1}^{p,-p}VV \\
\bigoplus\limits_{x \in X^{(p+1)}}H_{x}^{p+1}((\Omega_{X/\mathbb{Q}}^{p-1})^{\oplus j})  @>\cong>>  t \bigoplus\limits_{x \in X^{(p+1)}}H_{x}^{p+1}((\Omega_{X/\mathbb{Q}}^{p-1})) \oplus \cdots \oplus t^{j} \bigoplus\limits_{x \in X^{(p+1)}}H_{x}^{p+1}((\Omega_{X/\mathbb{Q}}^{p-1})),
\end{CD}
\]
where $\partial_{1}^{p,-p}: \bigoplus\limits_{x \in X^{(p)}}H_{x}^{p}((\Omega_{X/\mathbb{Q}}^{p-1})) \to \bigoplus\limits_{x \in X^{(p+1)}}H_{x}^{p+1}((\Omega_{X/\mathbb{Q}}^{p-1})) $.

Under these isomorphisms,  $\mathrm{Ch}(\xi_{j})$ can be written as $ta_{1}+ \cdots + t^{j}a_{j}$, where each $a_{i} \in \bigoplus\limits_{x \in X^{(p)}}H_{x}^{p}((\Omega_{X/\mathbb{Q}}^{p-1}))$ and $\partial_{1}^{p,-p}(a_{i})=0$.
There exists a similar isomorphism for $j+1$:
\[
\bigoplus\limits_{x \in X^{(p)}}H_{x}^{p}((\Omega_{X/\mathbb{Q}}^{p-1})^{\oplus j+1}) \cong t \bigoplus\limits_{x \in X^{(p)}}H_{x}^{p}((\Omega_{X/\mathbb{Q}}^{p-1})) \oplus \cdots \oplus t^{j+1} \bigoplus\limits_{x \in X^{(p)}}H_{x}^{p}((\Omega_{X/\mathbb{Q}}^{p-1})).
\]
And the differential 
\[
\partial_{1, j+1}^{p,-p}: \bigoplus\limits_{x \in X^{(p)}}H_{x}^{p}((\Omega_{X/\mathbb{Q}}^{p-1})^{\oplus j+1}) \to \bigoplus\limits_{x \in X^{(p+1)}}H_{x}^{p+1}((\Omega_{X/\mathbb{Q}}^{p-1})^{\oplus j+1}) 
\]
is $t \partial_{1}^{p,-p} \oplus \cdots \oplus t^{j+1} \partial_{1}^{p,-p}$.

We can always lift $ta_{1}+ \cdots + t^{j}a_{j}$ to $\eta_{j+1} := ta_{1}+ \cdots + t^{j}a_{j} + t^{j+1}a_{j+1}$(note $t^{j+1} \neq 0$ here), where $a_{j+1} \in \mathrm{Ker}(\partial_{1}^{p,-p})$. So $\eta_{j+1}  \in \mathrm{Ker}(\partial_{1,j+1}^{p,-p})$. Hence, we can always lift 
$\mathrm{Ch}(\xi_{j})$ to $\eta_{j+1} \in \mathrm{Ker}(\partial_{1,j+1}^{p,-p})$.

Since the map 
\[
\mathrm{Ch}: \bigoplus\limits_{x_{j+1} \in X_{j+1} ^{(p)}}K^{M}_{0}(O_{X_{j+1},x_{j+1}} \ \mathrm{on} \ x_{j+1}) \to \bigoplus\limits_{x \in X^{(p)}}H_{x}^{p}((\Omega_{X/\mathbb{Q}}^{p-1})^{\oplus j+1})
\]
is surjective, there exists $\xi_{j+1} \in \bigoplus\limits_{x_{j+1} \in X_{j+1} ^{(p)}}K^{M}_{0}(O_{X_{j+1},x_{j+1}} \ \mathrm{on} \ x_{j+1})$ such that $\mathrm{Ch}(\xi_{j+1})=\eta_{j+1} $.

By the naturality of Chern character, there exists the following commutative diagram:
\[
  \begin{CD}
     \bigoplus\limits_{x \in X^{(p)}}H_{x}^{p}((\Omega_{X/\mathbb{Q}}^{p-1})^{\oplus j+1}) @<\mathrm{Ch}<< \bigoplus\limits_{x_{j+1} \in X_{j+1} ^{(p)}}K^{M}_{0}(O_{X_{j+1},x_{j+1}} \ \mathrm{on} \ x_{j+1}) \\
     @Vt^{j+1}=0VV @Vt^{j+1}=0VV \\
      \bigoplus\limits_{x \in X^{(p)}}H_{x}^{p}((\Omega_{X/\mathbb{Q}}^{q-1})^{\oplus j}) @<\mathrm{Ch}<< \bigoplus\limits_{x_{j} \in X_{j} ^{(p)}}K^{M}_{0}(O_{X_{j},x_{j}} \ \mathrm{on} \ x_{j}).
  \end{CD}
\]

So we have the following commutative diagram:
\[
  \begin{CD}
     \eta_{j+1} = \mathrm{Ch}(\xi_{j+1})@<\mathrm{Ch}<<  \xi_{j+1} \\ 
     @Vt^{j+1}=0VV @Vt^{j+1}=0VV \\
      \eta_{j+1}|_{t^{j+1}=0} @<\mathrm{Ch}<<  \xi_{j+1}|_{t^{j+1=0}}.
  \end{CD}
\]
This says $\eta_{j+1}|_{t^{j+1}=0} = \mathrm{Ch}(\xi_{j+1}|_{t^{j+1}=0})$. On the other hand, since $\eta_{j+1}$ lifts $\mathrm{Ch}(\xi_{j})$, $\eta_{j+1}|_{t^{j+1}=0} = \mathrm{Ch}(\xi_{j})$. Hence, $\xi_{j+1}|_{t^{j+1}=0} - \xi_{j}$ is in the kernel of the map
\[
 \mathrm{Ch}:  \bigoplus\limits_{x_{j} \in X_{j} ^{(p)}}K^{M}_{0}(O_{X_{j},x_{j}} \ \mathrm{on} \ x_{j}) \to \bigoplus\limits_{x \in X^{(p)}}H_{x}^{p}((\Omega_{X/\mathbb{Q}}^{p-1})^{\oplus j}),
\]
which is $\bigoplus\limits_{x \in X^{(p)}}K^{M}_{0}(O_{X,x} \ \mathrm{on} \ x)$.
In other words, $\xi_{j+1}|_{t^{j+1}=0} = \xi_{j} + W$, for some $W \in \bigoplus\limits_{x \in X^{(p)}}K^{M}_{0}(O_{X,x} \ \mathrm{on} \ x)$.

As a \textbf{cycle}, $\xi_{j}$ can be written as a formal sum
\begin{equation}
\xi_{j} = (\xi_{j} + W) - W.
\end{equation}
Here, since $\bigoplus\limits_{x \in X^{(p)}}K^{M}_{0}(O_{X,x} \ \mathrm{on} \ x)$ is a direct summand of $\bigoplus\limits_{x_{j} \in X_{j} ^{(p)}}K^{M}_{0}(O_{X_{j},x_{j}} \ \mathrm{on} \ x_{j})$, both $W$ and $\xi_{j} + W$ are in $\bigoplus\limits_{x_{j} \in X_{j} ^{(p)}}K^{M}_{0}(O_{X_{j},x_{j}} \ \mathrm{on} \ x_{j})$.

Similarly, since $\bigoplus\limits_{x \in X^{(p)}}K^{M}_{0}(O_{X,x} \ \mathrm{on} \ x)$ is also a direct summand of $\bigoplus\limits_{x_{j+1} \in X_{j+1} ^{(p)}}K^{M}_{0}(O_{X_{j+1},x_{j+1}} \ \mathrm{on} \ x_{j+1})$, $W \in \bigoplus\limits_{x_{j+1} \in X_{j+1} ^{(p)}}K^{M}_{0}(O_{X_{j+1},x_{j+1}} \ \mathrm{on} \ x_{j+1})$ and 
the cycle $\xi_{j+1} -W \in \bigoplus\limits_{x_{j+1} \in X_{j+1} ^{(p)}}K^{M}_{0}(O_{X_{j+1},x_{j+1}} \ \mathrm{on} \ x_{j+1})$ satisfies
\[
(\xi_{j+1} -W )|_{t^{j+1}=0} = \xi_{j+1}|_{t^{j+1}=0} - W = \xi_{j} + W -W = \xi_{j}.
\]

Moreover, $\mathrm{Ch}(\xi_{j+1}-W)= \mathrm{Ch}(\xi_{j+1}) = \eta_{j+1} \in \mathrm{Ker}(\partial_{1,j+1}^{p,-p})$, hence, $\xi_{j+1} -W \in Z^{M}_{p}(D^{\mathrm{perf}}(X_{j+1}))(:=\mathrm{Ker}(d_{1,X_{j+1}}^{p,-p}))$ because of the commutative diagram
\[
  \begin{CD}
     \bigoplus\limits_{x \in X^{(p)}}H_{x}^{p}((\Omega_{X/\mathbb{Q}}^{p-1})^{\oplus j+1}) @<\mathrm{Ch}<< \bigoplus\limits_{x_{j+1} \in X_{j+1} ^{(p)}}K^{M}_{0}(O_{X_{j+1},x_{j+1}} \ \mathrm{on} \ x_{j+1}) \\ 
     @V \partial_{1,j+1}^{p,-p}VV @Vd_{1,X_{j+1}}^{p,-p}VV \\
     \bigoplus\limits_{x \in X^{(p+1)}}H_{x}^{p+1}((\Omega_{X/\mathbb{Q}}^{p-1})^{\oplus j+1}) @<\mathrm{Ch}<\cong< \bigoplus\limits_{x_{j+1} \in X_{j+1} ^{(p+1)}}K^{M}_{-1}(O_{X_{j+1},x_{j+1}} \ \mathrm{on} \ x_{j+1}). \\
  \end{CD}
\]

In conclusion, $\xi_{j+1} -W \in Z^{M}_{p}(D^{\mathrm{perf}}(X_{j+1}))(:=\mathrm{Ker}(d_{1,X_{j+1}}^{p,-p}))$ can lift $\xi_{j}$.
\end{proof}

In Section 4 of \cite{GGdeformation}, Green-Griffiths conjectures that 
\begin{conjecture} [(4.7) on page 506 \cite{GGdeformation}] \label{question: RatEqi Unobst}
Let $X$ be a smooth projective variety over a field $k$ of characteristic $0$, for each positive integer $p$, $TZ^{p}_{rat}(X)$ is formally unobstructed.
\end{conjecture}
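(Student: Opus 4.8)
The plan is to imitate the proof of Theorem~\ref{theorem: theorem obstruction}, but to exploit the structural fact that rational equivalence is defined as the \emph{image} of a differential rather than as a kernel; this renders the lifting problem strictly easier, since no correction term is needed. Recall from Definition~\ref{definition: Milnor K-theoretic Chow groups} that
\[
Z^{M}_{p,rat}(D^{\mathrm{perf}}(X_{j})) = \mathrm{Im}(d_{1,X_{j}}^{p-1,-p}) \subset \bigoplus_{x_{j} \in X_{j}^{(p)}} K^{M}_{0}(O_{X_{j},x_{j}} \ \mathrm{on} \ x_{j}).
\]
Thus it suffices to establish the following successive lifting statement: for every positive integer $j$ and every $\zeta_{j} \in Z^{M}_{p,rat}(D^{\mathrm{perf}}(X_{j}))$, there exists $\zeta_{j+1} \in Z^{M}_{p,rat}(D^{\mathrm{perf}}(X_{j+1}))$ with $f_{j}^{*}(\zeta_{j+1}) = \zeta_{j}$. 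Iterating this from $\tau = \zeta_{1} \in TZ^{p}_{rat}(X) \subset Z^{M}_{p,rat}(D^{\mathrm{perf}}(X_{1}))$ produces the desired compatible system, which is precisely formal unobstructedness.

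First I would choose a preimage: by definition $\zeta_{j} = d_{1,X_{j}}^{p-1,-p}(\omega_{j})$ for some $\omega_{j} \in \bigoplus_{x_{j} \in X_{j}^{(p-1)}} K^{M}_{1}(O_{X_{j},x_{j}} \ \mathrm{on} \ x_{j})$. Next I would lift $\omega_{j}$ rather than $\zeta_{j}$. Applying Lemma~\ref{lemma: keylemma} with $m=1$ and $i=p-1$ at each point $x \in X^{(p-1)}$ and taking the direct sum, the restriction map
\[
f_{j}^{*}: \bigoplus_{x_{j+1} \in X_{j+1}^{(p-1)}} K^{M}_{1}(O_{X_{j+1},x_{j+1}} \ \mathrm{on} \ x_{j+1}) \to \bigoplus_{x_{j} \in X_{j}^{(p-1)}} K^{M}_{1}(O_{X_{j},x_{j}} \ \mathrm{on} \ x_{j})
\]
is surjective, so I may pick $\omega_{j+1}$ with $f_{j}^{*}(\omega_{j+1}) = \omega_{j}$. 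I then \emph{define} $\zeta_{j+1} := d_{1,X_{j+1}}^{p-1,-p}(\omega_{j+1})$, which lies in $\mathrm{Im}(d_{1,X_{j+1}}^{p-1,-p}) = Z^{M}_{p,rat}(D^{\mathrm{perf}}(X_{j+1}))$ by construction. Since the differentials are natural with respect to $f_{j}: X_{j} \to X_{j+1}$ (the relevant square being part of Theorem~\ref{theorem: theoremFor obstruction issue}), one computes
\[
f_{j}^{*}(\zeta_{j+1}) = f_{j}^{*} d_{1,X_{j+1}}^{p-1,-p}(\omega_{j+1}) = d_{1,X_{j}}^{p-1,-p} f_{j}^{*}(\omega_{j+1}) = d_{1,X_{j}}^{p-1,-p}(\omega_{j}) = \zeta_{j},
\]
so $\zeta_{j+1}$ is the required lift.

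The essential contrast with the cycle case is that here no obstruction can arise. In Theorem~\ref{theorem: theorem obstruction} one had to lift an element of a \emph{kernel}, and a naive lift could fail to be annihilated by $d_{1,X_{j+1}}^{p,-p}$, forcing the decomposition-and-correction argument built on the $t$-adic splitting of the local cohomology terms and the surjectivity of the Chern character. By contrast, an element of an image automatically stays in the image once its preimage is lifted, so the only input needed is the surjectivity of $f_{j}^{*}$ on the source terms, supplied directly by Lemma~\ref{lemma: keylemma}. Accordingly, I expect the sole point demanding care to be the \emph{functoriality} of the Gersten-type differential $d_{1}^{p-1,-p}$ under the trivial-deformation maps $f_{j}$: one must verify that the commutative squares relating the $X_{j}$- and $X_{j+1}$-complexes, used implicitly in Section~\ref{Obstructions and negative K-groups} for $d_{1}^{p,-p}$, hold verbatim one step higher for $d_{1}^{p-1,-p}$, so that pushing the lifted preimage through the differential genuinely yields a lift of $\zeta_{j}$.
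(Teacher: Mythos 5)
Your proposal is correct and coincides with the paper's own proof: both take a preimage $\omega_{j}$ of $\zeta_{j}$ under $d_{1,X_{j}}^{p-1,-p}$, lift it via the surjectivity of $f_{j}^{*}$ on the $K^{M}_{1}$ terms supplied by Lemma~\ref{lemma: keylemma}, and push the lift through $d_{1,X_{j+1}}^{p-1,-p}$, using the commutative square relating the two differentials to conclude. Your closing observation that the image-versus-kernel asymmetry is what makes this case obstruction-free (no correction term $W$ needed) accurately captures the contrast with the proof of Theorem~\ref{theorem: theorem obstruction}.
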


For any positive integer $j$ and given any $\eta_{j} \in Z^{M}_{p,rat}(D^{\mathrm{perf}}(X_{j}))(:= \mathrm{Im}(d_{1, X_{j}}^{p-1,-p}))$, we want to know whether  $\eta_{j}$ can be lifted to  $\eta_{j+1} \in Z^{M}_{p,rat}(D^{\mathrm{perf}}(X_{j+1}))$. 

By definition, $\eta_{j} = d_{1, X_{j}}^{p-1,-p}(\xi_{j})$, for some $\xi_{j} \in \bigoplus\limits_{x_{j} \in X_{j} ^{(p-1)}}K^{M}_{1}(O_{X_{j},x_{j}} \ \mathrm{on} \ x_{j})$.
Since 
\[
f^{\ast}_{j}: \bigoplus_{x_{j+1} \in X_{j+1} ^{(p-1)}}K^{M}_{1}(O_{X_{j+1},x_{j+1}} \ \mathrm{on} \ x_{j+1}) \to \bigoplus_{x_{j} \in X_{j} ^{(p-1)}}K^{M}_{1}(O_{X_{j},x_{j}} \ \mathrm{on} \ x_{j})
\]
is surjective, see lemma ~\ref{lemma: keylemma}, we can always lift $\xi_{j}$ to $\xi_{j+1} \in \bigoplus\limits_{x_{j+1} \in X_{j+1} ^{(p-1)}}K^{M}_{1}(O_{X_{j+1},x_{j+1}} \ \mathrm{on} \ x_{j+1})$. Then $d_{1, X_{j+1}}^{p-1,-p}(\xi_{j+1})$ lifts $\eta_{j}$ because of the following commutative diagram:
\[
\begin{CD}
\bigoplus\limits_{x_{j+1} \in X_{j+1} ^{(p-1)}}K^{M}_{1}(O_{X_{j+1},x_{j+1}} \ \mathrm{on} \ x_{j+1})  @>f^{\ast}_{j}>> \bigoplus\limits_{x_{j} \in X_{j} ^{(p-1)}}K^{M}_{1}(O_{X_{j},x_{j}} \ \mathrm{on} \ x_{j}) \\
@Vd_{1, X_{j+1}}^{p-1,-p}VV  @Vd_{1, X_{j}}^{p-1,-p}VV \\ 
\bigoplus\limits_{x_{j+1} \in X_{j+1} ^{(p-1)}}K^{M}_{0}(O_{X_{j+1},x_{j+1}} \ \mathrm{on} \ x_{j+1})  @>f^{\ast}_{j}>> \bigoplus\limits_{x_{j} \in X_{j} ^{(p-1)}}K^{M}_{0}(O_{X_{j},x_{j}} \ \mathrm{on} \ x_{j}).
\end{CD}
\]

This proves the deformation from $Z^{M}_{p,rat}(D^{\mathrm{perf}}(X_{j}))$ to $Z^{M}_{p,rat}(D^{\mathrm{perf}}(X_{j+1}))$ is unobstructed. So we have
\begin{theorem} \label{theorem: theorem obstruction rat}
The conjecture \ref{question: RatEqi Unobst} is true, i.e., $TZ^{p}_{rat}(X)$  is unobstructed.
\end{theorem}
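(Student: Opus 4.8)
The plan is to exploit the fact that $Z^{M}_{p,rat}(D^{\mathrm{perf}}(X_{j}))$ is by \emph{definition} the image of the differential $d_{1,X_{j}}^{p-1,-p}$, so that lifting a rational-equivalence element reduces to lifting one of its preimages along the transition map. This is structurally much easier than the cycle case of Theorem~\ref{theorem: theorem obstruction}, where one must lift an element of a \emph{kernel}. First I would fix a positive integer $j$ and an arbitrary $\eta_{j} \in Z^{M}_{p,rat}(D^{\mathrm{perf}}(X_{j})) = \mathrm{Im}(d_{1,X_{j}}^{p-1,-p})$, and choose a preimage $\xi_{j} \in \bigoplus_{x_{j} \in X_{j}^{(p-1)}} K^{M}_{1}(O_{X_{j},x_{j}} \ \mathrm{on} \ x_{j})$ with $d_{1,X_{j}}^{p-1,-p}(\xi_{j}) = \eta_{j}$.

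The essential input is the surjectivity of $f_{j}^{\ast}$ on the degree-$(p-1)$ terms. This is precisely Lemma~\ref{lemma: keylemma} applied with $m=1$ and $i=p-1$, whose right-hand arrow is the surjection $f_{j}^{\ast}\colon \bigoplus K^{M}_{1}(O_{X_{j+1},x_{j+1}} \ \mathrm{on} \ x_{j+1}) \to \bigoplus K^{M}_{1}(O_{X_{j},x_{j}} \ \mathrm{on} \ x_{j})$. Using this, I would lift $\xi_{j}$ to some $\xi_{j+1}$ over $X_{j+1}$ with $f_{j}^{\ast}(\xi_{j+1}) = \xi_{j}$, and then set $\eta_{j+1} := d_{1,X_{j+1}}^{p-1,-p}(\xi_{j+1})$, which lies in $\mathrm{Im}(d_{1,X_{j+1}}^{p-1,-p}) = Z^{M}_{p,rat}(D^{\mathrm{perf}}(X_{j+1}))$ by construction.

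It then remains to verify that $\eta_{j+1}$ genuinely deforms $\eta_{j}$, i.e.\ that $f_{j}^{\ast}(\eta_{j+1}) = \eta_{j}$. This follows from the commutative square relating the two copies of $d^{p-1,-p}_{1}$ through $f_{j}^{\ast}$:
\[
f_{j}^{\ast}(\eta_{j+1}) = f_{j}^{\ast}\bigl(d_{1,X_{j+1}}^{p-1,-p}(\xi_{j+1})\bigr) = d_{1,X_{j}}^{p-1,-p}\bigl(f_{j}^{\ast}(\xi_{j+1})\bigr) = d_{1,X_{j}}^{p-1,-p}(\xi_{j}) = \eta_{j}.
\]
Since $j$ was arbitrary, iterating this one-step lifting produces a compatible family $\eta_{1}, \eta_{2}, \dots$ realizing formal unobstructedness of $TZ^{p}_{rat}(X)$.

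The contrast with Theorem~\ref{theorem: theorem obstruction} pinpoints where any difficulty could have lain, and why none does here. In the cycle case one lifts an element of $\mathrm{Ker}(d^{p,-p}_{1})$, and a naive lift need not remain in the kernel, which is exactly why that argument needed the correction term $W$ together with the direct-summand decomposition furnished by Lemma~\ref{lemma: keylemma}. For rational equivalence, membership in the image is automatic the moment a preimage has been lifted, so no obstruction class can arise and no correction is required. Consequently I expect no real obstacle: the entire content of the theorem is the surjectivity of $f_{j}^{\ast}$ provided by Lemma~\ref{lemma: keylemma}, and the ``hard part'' is merely the observation that the image description collapses the lifting problem to a single surjection.
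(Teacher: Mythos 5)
Your proposal is correct and is essentially identical to the paper's own proof: both pick a preimage $\xi_{j}$ of $\eta_{j}$ under $d_{1,X_{j}}^{p-1,-p}$, lift it to $\xi_{j+1}$ using the surjectivity of $f_{j}^{\ast}$ on the $K^{M}_{1}$ terms supplied by Lemma~\ref{lemma: keylemma}, and set $\eta_{j+1} := d_{1,X_{j+1}}^{p-1,-p}(\xi_{j+1})$, with the commutative square forcing $f_{j}^{\ast}(\eta_{j+1}) = \eta_{j}$. Your closing observation about why no correction term $W$ is needed (image versus kernel) accurately explains the contrast with Theorem~\ref{theorem: theorem obstruction}.
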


Recall that in Definition \ref{definition: Milnor K-theoretic Chow groups}, the $p$-th Milnor K-theoretic Chow group is defined to be:
\[
  CH^{M}_{p}(D^{\mathrm{perf}}(X_{j})) := \dfrac{Z^{M}_{p}(D^{\mathrm{Perf}}(X_{j}))}{Z^{M}_{p,rat}(D^{\mathrm{Perf}}(X_{j}))}.
\]

The proof of Theorem \ref{theorem: theorem obstruction} says , for any given $[\xi_{j}] \in CH^{M}_{p}(D^{\mathrm{perf}}(X_{j}))$, we can lift it to $[\xi_{j+1}] \in CH^{M}_{p}(D^{\mathrm{perf}}(X_{j+1}))$.

Recall that we have shown that, in \cite{Y-2}, $CH^{M}_{p}(D^{\mathrm{perf}}(X_{j}))$ satisfies Soul\'e's variant of Bloch-Quillen identification:
\[
CH^{M}_{p}(D^{\mathrm{perf}}(X_{j}))= H^{p}(X, K^{M}_{p}(O_{X_{j}}))_{\mathbb{Q}},
\]
where $K^{M}_{p}(O_{X_{j}})$ is the 
Milnor K-theory sheaf associated to the presheaf
\[
  U \to K^{M}_{p}(O_{X_{j}}(U)).
\]
So we have proved the following fact, which is already known to Green-Griffiths and can be deduced from Proposition 2.6 of  \cite{GGdeformation}(recalled below),
\begin{corollary} \cite{GGdeformation} \label{corollary: GG's corollary}
For each positive integer $j$, $X_{j}=X \times_{k} \mathrm{Spec}(k[t]/(t^{j+1}))$, for any given $[\xi_{j}] \in H^{p}(X, K^{M}_{p}(O_{X_{j}}))_{\mathbb{Q}}$, we can lift it to $[\xi_{j+1}] \in 
H^{p}(X, K^{M}_{p}(O_{X_{j+1}}))_{\mathbb{Q}}$.
\end{corollary}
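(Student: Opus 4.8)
The plan is to deduce this statement directly from the cycle-level lifting already established in Theorem~\ref{theorem: theorem obstruction}, transported through the Bloch--Quillen--Soul\'e identification proved in \cite{Y-2}. First I would record the identification
\[
CH^{M}_{p}(D^{\mathrm{perf}}(X_{j})) = H^{p}(X, K^{M}_{p}(O_{X_{j}}))_{\mathbb{Q}},
\]
valid for every positive integer $j$, which turns the problem of lifting a class in $H^{p}(X, K^{M}_{p}(O_{X_{j}}))_{\mathbb{Q}}$ along the map induced by $f_{j}: X_{j} \to X_{j+1}$ into the problem of lifting a class in the Milnor K-theoretic Chow group $CH^{M}_{p}(D^{\mathrm{perf}}(X_{j}))$.

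Next I would pass from cycles to Chow classes. By Definition~\ref{definition: Milnor K-theoretic Chow groups}, $CH^{M}_{p}(D^{\mathrm{perf}}(X_{j}))$ is the quotient $Z^{M}_{p}(D^{\mathrm{perf}}(X_{j}))/Z^{M}_{p,rat}(D^{\mathrm{perf}}(X_{j}))$. The pullback $f^{\ast}_{j}$ sends $Z^{M}_{p}(D^{\mathrm{perf}}(X_{j+1}))$ into $Z^{M}_{p}(D^{\mathrm{perf}}(X_{j}))$ and, by the commutative square appearing in the proof of Theorem~\ref{theorem: theorem obstruction rat}, sends $Z^{M}_{p,rat}(D^{\mathrm{perf}}(X_{j+1}))$ into $Z^{M}_{p,rat}(D^{\mathrm{perf}}(X_{j}))$; hence it descends to a well-defined map $\bar{f}^{\ast}_{j}: CH^{M}_{p}(D^{\mathrm{perf}}(X_{j+1})) \to CH^{M}_{p}(D^{\mathrm{perf}}(X_{j}))$. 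Given a class $[\xi_{j}]$, I would choose a representative cycle $\xi_{j} \in Z^{M}_{p}(D^{\mathrm{perf}}(X_{j}))$, invoke Theorem~\ref{theorem: theorem obstruction} to produce $\xi_{j+1} \in Z^{M}_{p}(D^{\mathrm{perf}}(X_{j+1}))$ with $f^{\ast}_{j}(\xi_{j+1}) = \xi_{j}$, and take $[\xi_{j+1}]$ to be its image in the quotient. Commutativity then gives $\bar{f}^{\ast}_{j}([\xi_{j+1}]) = [f^{\ast}_{j}(\xi_{j+1})] = [\xi_{j}]$, and transporting back through the identification produces the desired lift in $H^{p}(X, K^{M}_{p}(O_{X_{j+1}}))_{\mathbb{Q}}$.

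The substantive work is not in this corollary but already inside Theorem~\ref{theorem: theorem obstruction}: the surjectivity of $f^{\ast}_{j}$ on cycle groups rests on the graded splitting of the local cohomology groups $\bigoplus_{x \in X^{(p)}} H^{p}_{x}((\Omega^{p-1}_{X/\mathbb{Q}})^{\oplus j})$ into summands $t^{i} \bigoplus_{x \in X^{(p)}} H^{p}_{x}(\Omega^{p-1}_{X/\mathbb{Q}})$, together with the observation that one may always adjoin a fresh top-order term $t^{j+1}a_{j+1}$ with $a_{j+1} \in \mathrm{Ker}(\partial^{p,-p}_{1})$ without disturbing the cocycle conditions on the lower-order terms. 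Once that surjectivity is granted, the present corollary is purely formal; in particular no independent lifting of rational equivalence is needed for surjectivity itself, and Theorem~\ref{theorem: theorem obstruction rat} enters only to guarantee that $\bar{f}^{\ast}_{j}$ is well-defined on the quotient.
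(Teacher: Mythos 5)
Your proposal is correct and takes essentially the same route as the paper: the paper likewise deduces this corollary by combining the cycle-level lifting of Theorem~\ref{theorem: theorem obstruction} with the identification $CH^{M}_{p}(D^{\mathrm{perf}}(X_{j})) = H^{p}(X, K^{M}_{p}(O_{X_{j}}))_{\mathbb{Q}}$ from \cite{Y-2}, and your check that $f^{\ast}_{j}$ descends to the quotient by rational equivalence is exactly the step the paper leaves implicit. (The paper additionally sketches Green--Griffiths' independent proof via the vanishing of the Kodaira--Spencer class, which forces the coboundary $\delta$ in the long exact sequence to vanish, but that is offered as a supplement rather than as its own derivation.)
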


We briefly explain how to prove this Corollary by Green-Griffiths \cite{GGdeformation}. As a trivial version of (2.1) of \cite{GGdeformation}(page 498) or (2.8) of Proposition 2.3 of \cite{BEK}, there exists the following short exact sequence of sheaves
\[
0 \to \Omega^{p-1}_{X/ \mathbb{Q}}  \to K^{M}_{p}(O_{X_{j+1}}) \to K^{M}_{p}(O_{X_{j}}) \to 0.
\]
The associated long exact sequence is of the form
\begin{equation}
\cdots \to H^{p}(X, K^{M}_{p}(O_{X_{j+1}}))_{\mathbb{Q}} \to H^{p}(X, K^{M}_{p}(O_{X_{j}}))_{\mathbb{Q}} \xrightarrow{\delta} H^{p+1}(X, \Omega^{p-1}_{X/ \mathbb{Q}}) \to \cdots.
\end{equation}

The arithmetic cycle mapping 
\[
\eta : H^{p}(X, K^{M}_{p}(O_{X_{j}})) \to H^{p}(X, \Omega^{p}_{X_{j}/ \mathbb{Q}})
\]
is induced by the dlog map
\[
K^{M}_{p}(O_{X_{j}}) \to \Omega^{p}_{X_{j}/ \mathbb{Q}}
\]
\[
\{ r_{1}, \cdots, r_{p} \} \to \dfrac{dr_{1}}{r_{1}} \wedge \cdots \wedge \dfrac{dr_{p}}{r_{p}},
\]
where $d=d_{\mathbb{Q}}$.

Let $\theta_{j}$ denote the $j$-th Kodaira-Spencer class, see Section 3.1(page 492) of \cite{GGdeformation} for the definition.
\begin{lemma} [Proposition 2.6 of \cite{GGdeformation}(page 502)]
The coboundary map $\delta$ in the above long exact sequence(3.5) is given by
\[
\delta(\xi_{j})=\theta_{j} \rfloor \eta(\xi_{j}).
\]
In other words, the obstruction to lifting $\xi_{j} \in H^{p}(X, K^{M}_{p}(O_{X_{j}}))$ to 
$H^{p}(X, K^{M}_{p}(O_{X_{j+1}}))$ is given by  
\[
\delta(\xi_{j})=\theta_{j} \rfloor \eta(\xi_{j}),
\]
where $\eta(\xi_{j})$ is the arithmetic cycle class of $\xi_{j}$. 
\end{lemma}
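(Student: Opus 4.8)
The plan is to compute $\delta$ by an explicit \v{C}ech calculation and then to recognize the resulting cocycle as the contraction $\theta_{j} \rfloor \eta(\xi_{j})$. Fix an affine open cover $\mathfrak{U} = \{U_{\alpha}\}$ of $X$ and represent $\xi_{j} \in H^{p}(X, K^{M}_{p}(O_{X_{j}}))$ by a \v{C}ech $p$-cocycle $(s_{\alpha_{0} \cdots \alpha_{p}})$ whose components are finite sums of Milnor symbols $\{r_{1}, \dots, r_{p}\}$ with each $r_{i}$ a unit on $U_{\alpha_{0} \cdots \alpha_{p}}$. Because the sheaf surjection $K^{M}_{p}(O_{X_{j+1}}) \xrightarrow{\pi} K^{M}_{p}(O_{X_{j}})$ splits over each affine (lift each unit $r_{i}$ to a unit $\tilde{r}_{i} \in O_{X_{j+1}}^{\ast}$ and form $\{\tilde{r}_{1}, \dots, \tilde{r}_{p}\}$), the cocycle lifts to a $p$-cochain $\tilde{s}$ with $\pi(\tilde{s}) = s$. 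By the definition of the connecting homomorphism of the short exact sequence of sheaves above, $\delta(\xi_{j})$ is represented by the \v{C}ech coboundary $\check{d}\tilde{s}$, which maps to $\check{d}s = 0$ under $\pi$ and therefore lands in the subsheaf $\Omega^{p-1}_{X/\mathbb{Q}}$, giving a class in $H^{p+1}(X, \Omega^{p-1}_{X/\mathbb{Q}})$.

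The heart of the argument is to evaluate $\check{d}\tilde{s}$ locally. I would first make the inclusion $\Omega^{p-1}_{X/\mathbb{Q}} \hookrightarrow K^{M}_{p}(O_{X_{j+1}})$ explicit via the relative Milnor $K$-theory of the truncated polynomial extension, under which $a\,\mathrm{dlog}\,b_{1} \wedge \cdots \wedge \mathrm{dlog}\,b_{p-1}$ corresponds to the relative symbol $\{1 + t^{j} a,\, b_{1}, \dots, b_{p-1}\}$, with $d = d_{\mathbb{Q}}$ and $t^{j+1} = 0$. On an overlap the two lifts of a given unit produced from the two charts differ multiplicatively by a factor of the form $1 + t^{j} c$, and the order-$j$ datum relating these charts is precisely the Kodaira--Spencer cocycle $\theta_{j}$. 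Feeding this into the alternating sum $\check{d}\tilde{s}$ and using the Steinberg relation together with multilinearity, the coboundary collapses to a sum of relative symbols whose image in $\Omega^{p-1}_{X/\mathbb{Q}}$ is obtained by contracting the form $\frac{dr_{1}}{r_{1}} \wedge \cdots \wedge \frac{dr_{p}}{r_{p}}$, the \v{C}ech representative of $\eta(\xi_{j})$, against $\theta_{j}$; this is the local form of the identity $\check{d}\tilde{s} = \theta_{j} \rfloor \eta(\xi_{j})$.

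Finally I would assemble the local identities into the global statement, checking that the \v{C}ech cocycle constructed above coincides with the cup-product contraction defining $\theta_{j} \rfloor (-) : H^{p}(X, \Omega^{p}_{X_{j}/\mathbb{Q}}) \to H^{p+1}(X, \Omega^{p-1}_{X/\mathbb{Q}})$, invoking the definition of $\theta_{j}$ from Section 3.1 of \cite{GGdeformation}. The main obstacle I anticipate is the middle step: verifying that the relative-$K$-theory identification with $\Omega^{p-1}_{X/\mathbb{Q}}$ is compatible with $\mathrm{dlog}$, and then tracking the $t$-adic grading and the signs carefully enough to see that exactly the order-$t^{j}$, $dt$-directed part of $\eta(\xi_{j})$ survives. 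The essential use of \emph{absolute} differentials over $\mathbb{Q}$ (so that $dt \neq 0$ and the relative symbols are nontrivial) is what makes $\delta$, and hence the obstruction, computable this way; in particular, for the trivial family $X_{j} = X \times_{k} \mathrm{Spec}(k[t]/(t^{j+1}))$ one has $\theta_{j} = 0$, recovering the unobstructedness asserted in the Corollary.
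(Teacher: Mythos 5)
The first thing to say is that the paper contains no proof of this lemma: it is imported verbatim as Proposition 2.6 of \cite{GGdeformation} (page 502), just as the underlying short exact sequence of sheaves $0 \to \Omega^{p-1}_{X/\mathbb{Q}} \to K^{M}_{p}(O_{X_{j+1}}) \to K^{M}_{p}(O_{X_{j}}) \to 0$ is imported from (2.1) of \cite{GGdeformation} and (2.8) of Proposition 2.3 of \cite{BEK}. The only reasoning the paper itself adds is the specialization that for the trivial family $X_{j}=X\times_{k}\mathrm{Spec}(k[t]/(t^{j+1}))$ the Kodaira--Spencer class $\theta_{j}$ vanishes, hence $\delta=0$, which yields Corollary \ref{corollary: GG's corollary}. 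So your proposal is not competing with an in-paper argument; it is a blind reconstruction of Green--Griffiths' own proof, and in outline it is the right one: represent $\xi_{j}$ by a \v{C}ech cocycle of Milnor symbols, lift it as a cochain, and identify the coboundary of the lift, viewed in the relative subsheaf, with the contraction $\theta_{j}\rfloor\eta(\xi_{j})$. Note also that, as quoted in the paper, sequence (3.5) already concerns the trivial family, where the right-hand side is identically zero; for $\theta_{j}$ to carry content your proof must be set up for a general (non-product) deformation, which your sketch implicitly does.

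As a proof, however, the sketch has concrete gaps. First, ``splits over each affine'' should be ``is surjective on sections over each affine'': lifting the units $r_{i}$ gives surjectivity on symbols, not a homomorphic section; an actual sheaf-level splitting would force $\delta=0$ for every deformation, contradicting the very formula you are proving. Second, an off-by-one slip: with the paper's convention $O_{X_{j}}=O_{X}[t]/(t^{j+1})$, the kernel of $K^{M}_{p}(O_{X_{j+1}})\to K^{M}_{p}(O_{X_{j}})$ is generated by symbols $\{1+t^{j+1}a,\,b_{1},\dots,b_{p-1}\}$ computed where $t^{j+2}=0$, and two lifts of a unit differ by a factor $1+t^{j+1}c$, not $1+t^{j}c$ with $t^{j+1}=0$ (the latter describes the relative piece one stage lower). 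Third, the identification of that relative subsheaf with $\Omega^{p-1}_{X/\mathbb{Q}}$ compatibly with dlog is itself a nontrivial cited fact (it is what the sequence from \cite{GGdeformation} and \cite{BEK} encodes; compare Theorem \ref{theorem: CompRelatK} for the with-support analogue in this paper), so it must be invoked, not treated as a definition. Most seriously, the step you yourself call the heart --- that $\check{d}\tilde{s}$ collapses via Steinberg and multilinearity to $\theta_{j}\rfloor\eta(\xi_{j})$ --- is asserted in a single sentence, yet it is the entire content of Proposition 2.6: one must say precisely how $\theta_{j}$ enters (it records the comparison of local trivializations of the family over $k[t]/(t^{j+2})$, so that on an overlap the two lifts of each entry $r_{i}$ differ by $1+t^{j+1}\theta_{\alpha\beta}(r_{i})/r_{i}$), then expand the symbol by multilinearity and check, signs included, that the resulting cocycle is the \v{C}ech cup-product representative of the contraction. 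Without that computation the proposal is a correct plan rather than a proof; with it, it would essentially reproduce Green--Griffiths' argument.
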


In our situation, $X_{j}=X \times \mathrm{Spec}(k[t]/(t^{j+1}))$, the Kodaira-Spencer class $\theta_{j}=0$(see page 492 of \cite{GGdeformation}), so the coboundary map $\delta = 0$:
\[
\cdots \to H^{p}(X, K^{M}_{p}(O_{X_{j+1}}))_{\mathbb{Q}} \to H^{p}(X, K^{M}_{p}(O_{X_{j}}))_{\mathbb{Q}}  \xrightarrow{\delta =0} H^{p+1}(X, \Omega^{p-1}_{X/ \mathbb{Q}})
\]
This proves Corollary \ref{corollary: GG's corollary} above.

\textbf{Acknowledgements}.
The author must record his deep gratitude to Mark Green and Phillip Griffiths for enlightening discussions and for their interest in this work. He is very grateful to Spencer Bloch\cite{Bloch1} and to Christophe Soul\'e(see Section 2.3) for discussions.

The author thanks several professors for their support: Ben Dribus, H\'el\`ene Esnault, Jerome Hoffman, Marc Levine, Kefeng Liu, Jan Stienstra, Hongwei Xu, Chao Zhang and anonymous comments on previous versions.

\end{document}